\theoremstyle{plain} 
\newtheorem{thm}{Theorem}[section]
\newtheorem{lem}[thm]{Lemma}
\newtheorem{prop}[thm]{Proposition}
\theoremstyle{definition}
\newtheorem{defn}[thm]{Definition}
\newtheorem{exm}[thm]{Example}
\theoremstyle{remark}
\newtheorem{rem}[thm]{Remark}
\DeclareMathOperator{\GrMod}{GrMod}
\newcommand{\V}{\mathcal{V}}
\newcommand{\A}{\mathcal{A}(E,\sigma)}
\newcommand{\kang}{k\langle x, y, z \rangle}
\newcommand{\al}{\alpha}
\newcommand{\be}{\beta}
\newcommand{\ga}{\gamma}
\newcommand{\la}{\lambda}
\newcommand{\ep}{\varepsilon}
\title[DEF. REL. OF $3$-DIM. QUAD. AS-REGULAR ALGEBRAS]
{Defining relations of $3$-dimensional quadratic AS-regular algebras}
\author[A. Itaba]{Ayako Itaba}
\address{
Department of Mathematics, 
Faculty of Science, Tokyo University of Science\\
1-3 Kagurazaka, Shinjyuku, Tokyo, 162-8601, Japan}
\email{itaba@rs.tus.ac.jp}
\author[M. Matsuno]{Masaki Matsuno}
\address{
Graduate school of
Integrated Science and Technology, Shizuoka University\\
Ohya 836, Shizuoka 422-8529, Japan}
\email{matsuno.masaki.14@shizuoka.ac.jp}
\begin{document}
\begin{abstract}
Classification of AS-regular algebras is 
one of the main interests in non-commutative algebraic geometry. 
Recently, a complete list of superpotentials (defining relations) 
of all $3$-dimensional AS-regular algebras which are 
Calabi-Yau was given by Mori-Smith (the quadratic case) 
and Mori-Ueyama (the cubic case), 
however, no complete list of defining relations of 
all $3$-dimensional AS-regular algebras has not appeared 
in the literature. 
In this paper, we give all possible defining relations of
$3$-dimensional quadratic AS-regular algebras.  
Moreover, we classify them up 
to isomorphism and up to graded Morita equivalence in terms of their 
defining relations in the case that their point schemes are not elliptic 
curves.  In the case that their point schemes are elliptic curves, 
we give conditions for isomorphism and graded Morita equivalence 
in terms of geometric data.
\end{abstract}
\subjclass[2010]{16W50, 16S37, 16D90, 16E65.}
\keywords{AS-regular algebras, geometric algebras, 
          quadratic algebras, nodal cubic curves, elliptic curves, 
          Hesse form, Sklyanin algebras. }
\maketitle
\section{Introduction}
Classification of Artin-Schelter regular (AS-regular) algebras 
is one of the main interests in noncommutative algebraic geometry. 
It was originally defined by Artin-Schelter \cite{AS}, 
and in that paper, it was attempted 
to classify $3$-dimensional AS-regular algebras generated in degree $1$, 
partially using computer programs. 
It was shown in \cite{AS} that 
every $3$-dimensional AS-regular algebra generated in degree $1$ 
has either $3$ generators and $3$ quadratic defining relations 
(the quadratic case), 
or $2$ generators and $2$ cubic defining relations 
(the cubic case). 
In each case, a list of defining relations 
(in fact potentials in the modern terminology) of 
\lq\lq generic\rq\rq $3$-dimensional AS-regular algebras was given 
in \cite[Table (3.11)]{AS} (the quadratic case) 
and in \cite[Table (3.9)]{AS} (the cubic case). 
Soon after, Artin-Tate-Van den Bergh \cite{ATV1} found a nice one-to-one 
correspondence between the set of $3$-dimensional AS-regular algebras $A$ 
and the set of regular geometric pairs $(E,\sigma)$ 
where $E$ is a scheme 
and $\sigma \in {\rm Aut}_{k}E$, 
so the classification of $3$-dimensional AS-regular algebras 
reduces to the classification of regular geometric pairs. 
A list of regular geometric pairs corresponding to 
``generic'' $3$-dimensional AS-regular algebras 
was given in \cite[4.13]{ATV1}. 
(A complete list of regular geometric pairs 
``up to graded Morita equivalence''
in the quadratic case was given in \cite[Table 1]{BP}. See Remark \ref{Z-algs}.) 
This work convinced us that algebraic geometry is very useful 
to study even noncommutative algebras, 
and is considered as a starting point of the research field 
noncommutative algebraic geometry. 

Although the next natural project is to classify $4$-dimensional 
AS-regular algebras, which has been in fact very active until now, 
some ``non-generic'' $3$-dimensional AS-regular algebras 
were also studied (\cite{MU}, \cite{NVZ}, etc.). 
Recently, a complete list of superpotentials (defining relations) 
of all $3$-dimensional AS-regular algebras which are 
``Calabi-Yau'' was given in \cite{MS} (the quadratic case) 
and in \cite{MU2} (the cubic case), 
however, no complete list of defining relations of 
``all'' $3$-dimensional AS-regular algebras has not appeared 
in the literature. 
So the goal of our project is 
\begin{enumerate}[(I)]
\item to give a complete list of defining relations of 
      ``all'' $3$-dimensional quadratic AS-regular algebras, 
\item to classify them up to isomorphism 
      in terms of their defining relations, and 
\item to classify them up to graded Morita equivalence 
      in terms of their defining relations. 
\end{enumerate}
In this paper, we completed our project 
in the case that the point scheme is not an elliptic curve. 

This paper is organized as follows: 
In Section \ref{sec_preliminary}, 
we recall the definitions of a twisted algebra from \cite{Z},
a geometric algebra from \cite{Mo1}, 
and an AS-regular algebra from \cite{AS}. 
In Section \ref{sec_main}, 
we give a complete list of defining relations of 
$3$-dimensional quadratic AS-regular algebras
whose point schemes are not elliptic curves,
and classify them up to isomorphism and up to graded Morita equivalence
in terms of their defining relations 
(see Theorems \ref{thm_main}, \ref{thm_main2}).
In particular, in the case that the point scheme is a nodal cubic curve,
we found a new algebra which is not isomorphic to any algebra
classified in \cite{NVZ} (see Remark \ref{rem_typeNC}).
Finally, in Section \ref{sec_EC}, 
we give a complete list of defining relations of 
geometric algebras
whose point schemes are elliptic curves
(which include $3$-dimensional quadratic AS-regular algebras
whose point schemes are elliptic curves),
and 
conditions for isomorphism and graded Morita equivalence
in terms of geometric data 
(see Theorems \ref{df}, \ref{classify}, \ref{main4}).
\section{Preliminary}
\label{sec_preliminary}
Throughout this paper, 
we fix an algebraically closed field $k$ 
of characteristic zero, and assume that
a graded $k$-algebra is
an $\mathbb{N}$-graded algebra 
$A=\bigoplus_{i \in \mathbb{N}}A_{i}$. 
A {\it connected graded} algebra is a graded algebra 
$A=\bigoplus_{i \in \mathbb{N}}A_{i}$ such that $A_{0}=k$. 
We denote by $\GrMod A$ 
the category of graded right $A$-modules. 
Morphisms in $\GrMod A$ are right $A$-module homomorphisms 
preserving degrees. 
We say that two graded algebras $A$ and $A'$ 
are {\it graded Morita equivalent} 
if the categories $\GrMod A $ and $ \GrMod A'$ 
are equivalent. 
\subsection{Twisted Algebras}

For a graded algebra $A$, 
Zhang \cite{Z}
introduced a notion of
twisted algebra $A^{\varphi}$ of $A$ 
by a graded algebra automorphism $\varphi \in {\rm GrAut}_{k}A$. 
In this paper, we only define a twisted algebra 
for a quadratic algebra. 
A quadratic algebra $A$ is of the form $T(V)/(R)$ 
where $V$ is a finite-dimensional $k$-vector space, 
$T(V)$ is the tensor algebra of $V$, $R\subset V\otimes_{k}V$ is a subspace
and $(R)$ is the two-sided ideal of $T(V)$ generated by $R$. 
We denote the general linear group of $V$ by ${\rm GL}\,(V)$.
It is easy to check the following lemma.
\begin{lem}\label{lem1}
	Let $A=T(V)/(R)$ and $A'=T(V)/(R')$ be quadratic algebras with $R,R' \subset V \otimes_{k} V$.
	Then $A \cong A'$ if and only if there is $\phi \in {\rm GL}(V)$ such that $R'=(\phi \otimes \phi)(R)$.
\end{lem}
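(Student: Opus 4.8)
The plan is to prove both directions of the biconditional, treating this as an essentially formal statement about presentations of quadratic algebras. The key observation is that an isomorphism of graded algebras $A \cong A'$ is automatically determined by what it does in degree $1$, since both algebras are generated in degree $1$ by the image of $V$.

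First I would prove the ``if'' direction. Suppose $\phi \in \GL(V)$ satisfies $R' = (\phi \otimes \phi)(R)$. Since $T(V)$ is the free algebra on $V$, the linear map $\phi \colon V \to V \subset T(V)$ extends uniquely to a graded algebra automorphism $\Phi \colon T(V) \to T(V)$, whose degree-$n$ component is $\phi^{\otimes n}$. In particular the degree-$2$ component is $\phi \otimes \phi$, so $\Phi((R)) = (\,(\phi\otimes\phi)(R)\,) = (R')$ as two-sided ideals (using that $\Phi$ is an algebra automorphism, it carries the ideal generated by $R$ onto the ideal generated by $\Phi(R)$). Hence $\Phi$ descends to an isomorphism $T(V)/(R) \to T(V)/(R')$, i.e. $A \cong A'$.

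Conversely, suppose $f \colon A \to A'$ is an isomorphism of graded algebras. Restricting to degree $1$ gives a linear isomorphism $\phi := f|_{A_1} \colon A_1 \to A'_1$; since for a quadratic algebra the natural map $V \to A_1$ is an isomorphism (there are no relations in degree $1$), we may regard $\phi \in \GL(V)$. Because $A'$ is generated in degree $1$, the algebra map $f$ agrees with the unique algebra homomorphism $T(V)/(R) \to T(V)/(R')$ induced by $\phi$; lifting to the tensor algebras, the composite $V \hookrightarrow T(V) \xrightarrow{\Phi} T(V) \twoheadrightarrow A'$ must kill exactly $R$ (since $f$ is injective, nothing outside $(R)$ maps to zero, and since $f$ is well-defined, $R$ maps to zero). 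Unwinding this in degree $2$: $(\phi\otimes\phi)(R)$ lies in the kernel of $V\otimes V \to A'_2$, which is $R'$, giving $(\phi\otimes\phi)(R) \subseteq R'$. Applying the same argument to $f^{-1}$ with $\phi^{-1}$ gives $(\phi^{-1}\otimes\phi^{-1})(R') \subseteq R$, i.e. $R' \subseteq (\phi\otimes\phi)(R)$, and therefore $R' = (\phi\otimes\phi)(R)$.

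The only genuine subtlety — and the step I would state most carefully — is the identification of the degree-$2$ graded piece of $A = T(V)/(R)$ with $(V\otimes V)/R$, so that ``$\Phi$ sends $R$ into the kernel of the projection to $A'_2$'' really does translate into the inclusion $(\phi\otimes\phi)(R)\subseteq R'$; this is immediate because $(R)_2 = R$ (the ideal $(R)$ has no contribution in degree $2$ beyond $R$ itself, as $R\subset V\otimes V$ and degree-$1$ elements times $R$ land in degree $\geq 3$). Everything else is the functoriality of $T(-)$ and the universal property of quotients, which is why the lemma is ``easy to check'' as claimed.
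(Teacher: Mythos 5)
Your proof is correct and supplies exactly the standard argument that the paper omits (the lemma is stated there with only the remark that it ``is easy to check,'' and no proof is given). Both directions are handled properly --- in particular you correctly identify the key point that $(R)_2=R$ and $(R')_2=R'$, so that a graded isomorphism, being determined by its degree-one part $\phi\in{\rm GL}(V)$, forces $(\phi\otimes\phi)(R)=R'$.
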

\begin{defn}\label{twa}
	Let $V$ be a finite-dimensional $k$-vector space
	and $A=T(V)/(R)$ a quadratic algebra with
	$R \subset V \otimes_{k} V$.
	\begin{enumerate}[{\rm (1)}]
		\item For $\phi \in {\rm GL}(V)$,
		we define the twisted algebra $A^{\phi}:=T(V)/(R^{\phi})$ of $A$ by $\phi$
		where $R^{\phi}:=(\phi \otimes {\rm id})(R)\subset V\otimes_{k} V$.
		\item For $\varphi \in {\rm GrAut}_{k}A$,
		we define the twisted algebra $A^{\varphi}:=A^{\varphi|_{V}}$ of $A$
		by $\varphi$ where $\varphi|_{V} \in {\rm GL}(V)$.
	\end{enumerate}
\end{defn}
For a quadratic algebra $A$ and $\phi \in {\rm GL}(V)$, 
it follows from the definition
that $(A^{\phi})^{\phi^{-1}}=A$. 
If $\varphi \in {\rm GrAut}_{k}A$, 
then 
$\varphi \in {\rm GrAut}_{k}A^{\varphi}$ and $(A^{\varphi})^{\varphi^{-1}}=A$. 
Since $A^{\varphi}$ is isomorphic to the twisted algebra
defined in \cite{Z}, the following theorem is shown.

\begin{thm}[{\rm \cite[Theorem 3.1]{Z}}]\label{twist}
	Let $V$ be a finite-dimensional $k$-vector space and 
	$A=T(V)/(R)$ a quadratic algebra with $R\subset V\otimes V$. 
	If $\varphi \in {\rm GrAut}_{k}A$, 
	then ${\rm GrMod}\,A \cong {\rm GrMod}\,A^{\varphi}$.
\end{thm}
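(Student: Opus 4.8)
The theorem is quoted verbatim from \cite[Theorem 3.1]{Z}, and in view of the observation made just above---that the quadratic twist $A^{\varphi}$ of Definition \ref{twa} is isomorphic, as a graded algebra, to Zhang's twist of $A$---it is an immediate consequence. So the plan is to recall the two ingredients, both short in this special case. First I would pass to Zhang's framework by setting $\tau_{n} := \varphi^{n} \in {\rm GrAut}_{k}A$ for $n \in \mathbb{Z}$ and checking that $\tau = \{\tau_{n}\}_{n}$ is a twisting system: for homogeneous $a,b \in A$,
\[
  \tau_{n}\bigl(a\,\tau_{m}(b)\bigr) = \varphi^{n}(a)\,\varphi^{n}\varphi^{m}(b) = \tau_{n}(a)\,\tau_{n+m}(b),
\]
which holds simply because $\varphi$ is a degree-preserving algebra automorphism. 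This produces Zhang's twisted algebra $B := A^{\tau}$, having the same underlying graded vector space as $A$ but with a new product of the shape $a * b = a\,\varphi^{\deg a}(b)$ on homogeneous elements.

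Next I would identify $B$ with $A^{\varphi} = T(V)/(R^{\varphi})$. Both are generated in degree $1$ by a copy of $V$, so by Lemma \ref{lem1} it suffices to exhibit a linear automorphism of $V$ carrying the degree-$2$ relations of one onto those of the other. Unravelling the new product, the map $V\otimes V \to B_{2}$ is $v\otimes w \mapsto v * w$, which equals the composite of $\,{\rm id}\otimes\varphi|_{V}\,$ with the quotient by $R$; hence the relation space of $B$ is $({\rm id}\otimes\varphi|_{V}^{-1})(R)$, and applying $\varphi|_{V}\otimes\varphi|_{V}$ carries it onto $(\varphi|_{V}\otimes {\rm id})(R) = R^{\varphi}$. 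By Lemma \ref{lem1} this gives $B \cong A^{\varphi}$, hence ${\rm GrMod}\,B \cong {\rm GrMod}\,A^{\varphi}$.

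It then remains to prove ${\rm GrMod}\,A \cong {\rm GrMod}\,B$. Define $F \colon {\rm GrMod}\,A \to {\rm GrMod}\,B$ by letting $F(M)$ be $M$ as a graded vector space equipped with the twisted action $m \cdot_{\tau} a := m\,\varphi^{\deg m}(a)$ for homogeneous $m \in M$, and $F(f) := f$ on morphisms. One checks $F(M)$ is a genuine graded $B$-module: the associativity
\[
  (m \cdot_{\tau} a)\cdot_{\tau} b = m\,\varphi^{\deg m}(a)\,\varphi^{\deg m + \deg a}(b) = m\cdot_{\tau}(a * b)
\]
reduces to $\varphi$ being degree-preserving and multiplicative (and the unit axiom is clear). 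Since $\varphi^{-1} \in {\rm GrAut}_{k}B$ and $(A^{\varphi})^{\varphi^{-1}} = A$, the same recipe applied to $B$ with the twisting system $\{\varphi^{-n}\}$ gives a functor $G \colon {\rm GrMod}\,B \to {\rm GrMod}\,A$, and a direct computation shows that $GF$ and $FG$ are in fact the identity functors on the respective categories; as $F$ is additive, exact, and degree-preserving, it is an equivalence of categories.

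The step requiring the most care is the bookkeeping of degree shifts and of left/right conventions: the precise formula $m\cdot_{\tau}a = m\,\varphi^{\deg m}(a)$ (rather than one using $\deg a$) is exactly what makes the module action associative against the new product $*$, and the same sensitivity governs which tensor slot $\varphi|_{V}$ acts in when matching $B$ with $A^{\varphi}$ in Definition \ref{twa} and, if desired, when reconciling the formulas above with the conventions of \cite{Z}. Once these are pinned down, every remaining verification---the twisting-system axiom, functoriality, and the construction of the quasi-inverse---is routine.
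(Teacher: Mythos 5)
Your argument is correct, but it is considerably more self-contained than what the paper does: the paper offers no proof at all, only the one-sentence observation that the quadratic twist $A^{\varphi}$ of Definition \ref{twa} is isomorphic to Zhang's twisted algebra, after which \cite[Theorem 3.1]{Z} is invoked as a black box. You carry out that identification explicitly \emph{and} reprove the special case of Zhang's equivalence for the twisting system $\{\varphi^{n}\}$, including the quasi-inverse; all of your formulas check out (in particular $(m\cdot_{\tau}a)\cdot_{\tau}b=m\cdot_{\tau}(a*b)$ and $B^{\{\varphi^{-n}\}}=A$). One point deserves to be made explicit, because it is exactly where the hypothesis $\varphi\in{\rm GrAut}_{k}A$ (rather than merely $\varphi|_{V}\in{\rm GL}(V)$) enters: before applying Lemma \ref{lem1} you identify $B$ with $T(V)/\bigl(({\rm id}\otimes\varphi|_{V}^{-1})(R)\bigr)$, which presupposes that the Zhang twist $B$ is again quadratic, i.e.\ that its relation ideal is generated by the degree-$2$ kernel. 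In degree $n$ the kernel of $V^{\otimes n}\to B_{n}$ is $({\rm id}\otimes\varphi\otimes\cdots\otimes\varphi^{n-1})^{-1}$ applied to the degree-$n$ part of $(R)$, and checking that this is generated in degree $2$ uses $(\varphi|_{V}\otimes\varphi|_{V})(R)=R$ — the very identity expressing that $\varphi|_{V}$ extends to an algebra automorphism. (Indeed, with that identity one even has $({\rm id}\otimes\varphi|_{V}^{-1})(R)=(\varphi|_{V}\otimes{\rm id})(R)=R^{\varphi}$ on the nose, so the appeal to Lemma \ref{lem1} can be replaced by an equality.) Without this hypothesis the conclusion genuinely fails, as Remark \ref{ta} and Example \ref{exam2} show, so the step is worth a sentence rather than being absorbed into the ``routine'' bookkeeping.
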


\begin{rem}\label{ta}
	Let $A=T(V)/(R)$ be a quadratic algebra and $\phi \in {\rm GL}(V)$.
	If $(\phi \otimes \phi)(R)=R$, then $\phi$ extends to $\overline{\phi} \in {\rm GrAut}_{k}A$,
	so ${\rm GrMod}\,A \cong {\rm GrMod}\,A^{\overline{\phi}}={\rm GrMod}\,A^{\phi}$.
	However, when $(\phi \otimes \phi)(R) \neq R$, 
        $A$ may not be graded Morita equivalent to $A^{\phi}$ 
	(See Example \ref{exam2}). 
\end{rem}
\subsection{Geometric Algebras}
Let $V$ be a finite dimensional $k$-vector space.
The equivalence relation on $V \setminus \{ 0 \}$ is defined by 
$$
u \thicksim v \,\Longleftrightarrow\, 
{\rm there \,\,exists}\,\, \la \in k^{\times}\,\, {\rm with}\,\, u=\la v.
$$
The projective space associated to $V$ is defined by
$$
\mathbb{P}(V):=V \setminus \{0\}/\thicksim.
$$
For $\phi \in {\rm GL}(V)$,
the map $\overline{\phi^{\ast}}:\mathbb{P}(V^{\ast}) \rightarrow \mathbb{P}(V^{\ast})$
defined by $\overline{\phi^{\ast}}(\overline{\xi})=\overline{\phi^{\ast}(\xi)}$ 
is an automorphism 
where $\phi^{\ast}:V^{\ast} \rightarrow V^{\ast}$ 
is the dual map of $\phi$.
For $\phi,\psi \in {\rm GL}(V)$,
the map $\phi \times \psi:V \times V \rightarrow V \otimes_{k} V$ defined by $(\phi \times \psi)(v,w)=\phi(v) \otimes \psi(w)$
is a bilinear map and induces a linear map $\phi \otimes \psi:V \otimes_{k} V \rightarrow V \otimes_{k} V$ by
$(\phi \otimes \psi)(v \otimes w)=\phi(v) \otimes \psi(w)$ where $v$, $w \in V$.
For $g=\sum v_{i}\otimes w_{i} \in V \otimes_{k} V$,
we write
$$
g(p,q)=\sum \xi(v_{i})\eta(w_{i})
$$
where $p=\overline{\xi},q=\overline{\eta} \in \mathbb{P}(V^{\ast})$.
Note that the zero set of $R \subset V \otimes_{k} V$,
$$
\mathcal{V}(R):=\{ (p,q)\in\mathbb{P}(V^{\ast})\times\mathbb{P}(V^{\ast})\,|\,g(p,q)=0\,\,{\rm for\,\,any\,\,} g \in R \}
$$
is well-defined.

In \cite{Mo1}, the notion of {\it geometric algebra} was introduced. 
\begin{defn}[\cite{Mo1}]
	\label{geometric}
	A geometric pair $(E,\sigma)$ consists of a projective variety 
        $E \subset \mathbb{P}(V^{\ast})$
	and $\sigma \in {\rm Aut}_{k}\,E$.
	Let $A=T(V)/(R)$ be a quadratic algebra with $R \subset V \otimes_{k} V$.
	\begin{enumerate}[{\rm (1)}]
		\item We say that $A$ satisfies (G1) if there exists a geometric pair $(E,\sigma)$ such that
		$$
		\mathcal{V}(R)=\{ (p,\sigma(p)) \in \mathbb{P}(V^{\ast})\times\mathbb{P}(V^{\ast})\,|\,p \in E \}.
		$$
		In this case, we write $\mathcal{P}(A)=(E,\sigma)$, 
                and call $E$ the {\it point scheme} of $A$. 
		\item We say that $A$ satisfies (G2) if there exists a geometric pair $(E,\sigma)$ such that
		$$
		R=\{ f \in V \otimes_{k} V\,|\,f(p,\sigma(p))=0\,\,{\rm for\,\,any\,\,} p \in E \}.
		$$
		In this case, we write $A=\mathcal{A}(E,\sigma)$.
		\item A quadratic algebra $A$ is called geometric if $A$ satisfies both (G1) and (G2)
		with $A=\mathcal{A}(\mathcal{P}(A))$.
	\end{enumerate}
\end{defn}
If $A$ satisfies (G1), then $A$ determines the pair $(E,\sigma)$.
Conversely, if $A$ satisfies (G2), then $A$ is determined by the pair $(E,\sigma)$.
When we say that $\mathcal{A}(E,\sigma)$ is geometric,
we tacitly assume that $\mathcal{P}(\mathcal{A}(E,\sigma))=(E,\sigma)$
so that the point scheme of $\mathcal{A}(E,\sigma)$ is $E$.

Note that, for $g=\sum v_{i} \otimes w_{i} \in V \otimes_{k} V$,
$\phi,\psi \in {\rm GL}\,(V)$ and $p,q \in \mathbb{P}(V^{\ast})$,
$((\phi \otimes \psi)(g))(p,q)=0$ 
if and only if $g\left(\overline{\phi^{\ast}}(p),\overline{\psi^{\ast}}(q)\right)=0$.

\begin{prop}\label{ge}
	Let $E \subset \mathbb{P}(V^{\ast})$ be a projective variety,
	$\sigma \in {\rm Aut}_{k}\,E$ and $\phi \in {\rm GL}\,(V)$.
	Suppose that $\overline{\phi^{\ast}} \in {\rm Aut}_{k}\mathbb{P}(V^{\ast})$
	restricts to $\overline{\phi^{\ast}} \in {\rm Aut}_{k}\,E$.
	Let $A=T(V)/(R)$ be a quadratic algebra with $R \subset V \otimes_{k} V$.
	\begin{enumerate}[{\rm (1)}]
		\item 
		$\mathcal{A}\left( E,\sigma\overline{\phi^{\ast}} \right)
                =\mathcal{A}(E,\sigma)^{\phi}$.
		
		\item If $\mathcal{P}(A)=(E,\sigma)$,
		then $\mathcal{P}(A^{\phi})=(E,\sigma\overline{\phi^{\ast}})$.
		
		\item If $A$ is geometric with $\mathcal{P}(A)=(E,\sigma)$,
		then $A^{\phi}$ is geometric with $\mathcal{P}(A^{\phi})
                =(E,\sigma\overline{\phi^{\ast}})$.
	\end{enumerate}
\end{prop}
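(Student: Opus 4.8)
The plan is to prove the three parts in the order (1), (2), (3), since (2) follows from (1) together with Definition \ref{geometric}, and (3) is an immediate combination of (1) and (2). The single technical fact that makes everything work is the displayed identity just before the proposition: for $g = \sum v_i \otimes w_i \in V \otimes_k V$, automorphisms $\phi, \psi \in \GL(V)$ and points $p, q \in \mathbb{P}(V^{\ast})$, one has $((\phi \otimes \psi)(g))(p,q) = 0$ if and only if $g(\overline{\phi^{\ast}}(p), \overline{\psi^{\ast}}(q)) = 0$. Applying this with $\psi = \mathrm{id}$ (so that $\overline{\psi^{\ast}} = \mathrm{id}$ as well) will convert statements about $R^{\phi} = (\phi \otimes \mathrm{id})(R)$ into statements about $R$ evaluated at reparametrized points.

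For part (1): by definition $\mathcal{A}(E,\sigma)^{\phi} = T(V)/(R^{\phi})$ where $R = \{ f \in V \otimes_k V \mid f(p, \sigma(p)) = 0 \text{ for all } p \in E\}$ and $R^{\phi} = (\phi \otimes \mathrm{id})(R)$. I want to show $R^{\phi} = \{ f \in V \otimes_k V \mid f(p, (\sigma \overline{\phi^{\ast}})(p)) = 0 \text{ for all } p \in E \}$. Take $f \in R^{\phi}$, say $f = (\phi \otimes \mathrm{id})(g)$ with $g \in R$. For $p \in E$, the identity gives $f(p, q) = 0 \Leftrightarrow g(\overline{\phi^{\ast}}(p), q) = 0$; setting $q = (\sigma \overline{\phi^{\ast}})(p)$ and noting $\overline{\phi^{\ast}}(p) \in E$ by hypothesis, we get $g(\overline{\phi^{\ast}}(p), \sigma(\overline{\phi^{\ast}}(p))) = 0$ because $g \in R$. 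Conversely, since $\phi \otimes \mathrm{id}$ is invertible on $V \otimes_k V$, any $f$ in the right-hand side is $(\phi \otimes \mathrm{id})(g)$ for a unique $g$, and running the equivalence backwards (using that $\overline{\phi^{\ast}}$ is a bijection of $E$, so $\overline{\phi^{\ast}}(p)$ ranges over all of $E$) shows $g \in R$, hence $f \in R^{\phi}$. This gives the equality of the two defining subspaces, and hence the equality $\mathcal{A}(E, \sigma\overline{\phi^{\ast}}) = \mathcal{A}(E,\sigma)^{\phi}$ of algebras.

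For part (2): suppose $\mathcal{P}(A) = (E, \sigma)$, i.e. $A = T(V)/(R)$ satisfies (G1) via $(E,\sigma)$, so $\mathcal{V}(R) = \{(p, \sigma(p)) \mid p \in E\}$. I must show $\mathcal{V}(R^{\phi}) = \{(p, (\sigma\overline{\phi^{\ast}})(p)) \mid p \in E\}$. Using the key identity again, $(p,q) \in \mathcal{V}(R^{\phi})$ iff $g(\overline{\phi^{\ast}}(p), q) = 0$ for all $g \in R$, i.e. iff $(\overline{\phi^{\ast}}(p), q) \in \mathcal{V}(R) = \{(p', \sigma(p')) \mid p' \in E\}$. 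Because $\overline{\phi^{\ast}}$ restricts to an automorphism of $E$, this happens iff $p \in E$ (so that $p' := \overline{\phi^{\ast}}(p) \in E$) and $q = \sigma(\overline{\phi^{\ast}}(p)) = (\sigma \overline{\phi^{\ast}})(p)$. That is exactly the claim, so $A^{\phi}$ satisfies (G1) with $\mathcal{P}(A^{\phi}) = (E, \sigma\overline{\phi^{\ast}})$. Finally, part (3): if $A$ is geometric with $\mathcal{P}(A) = (E,\sigma)$, then $A = \mathcal{A}(E,\sigma)$ and by (2) $\mathcal{P}(A^{\phi}) = (E, \sigma\overline{\phi^{\ast}})$; by (1), $A^{\phi} = \mathcal{A}(E,\sigma)^{\phi} = \mathcal{A}(E, \sigma\overline{\phi^{\ast}}) = \mathcal{A}(\mathcal{P}(A^{\phi}))$, so $A^{\phi}$ satisfies (G2) with the correct pair and is therefore geometric.

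I do not expect a serious obstacle here; the argument is essentially bookkeeping built on the one displayed lemma. The only point requiring genuine care is the use of the hypothesis that $\overline{\phi^{\ast}}$ restricts to an automorphism (not merely a self-map) of $E$: surjectivity of $\overline{\phi^{\ast}}|_E$ is what lets the reparametrized point $\overline{\phi^{\ast}}(p)$ sweep out all of $E$, which is needed for the reverse inclusions in (1) and for pinning down the locus in (2). I would state that usage explicitly at each step rather than leave it implicit.
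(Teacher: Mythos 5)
Your proof is correct and follows essentially the same route as the paper: both arguments reduce each part to the displayed identity $((\phi\otimes\psi)(g))(p,q)=0 \Leftrightarrow g\left(\overline{\phi^{\ast}}(p),\overline{\psi^{\ast}}(q)\right)=0$ with $\psi=\mathrm{id}$, and both invoke the hypothesis that $\overline{\phi^{\ast}}$ restricts to an automorphism of $E$ exactly where you flag it (the paper phrases your two inclusions in (1) as a single chain of equivalences via $(\overline{\phi^{\ast}})^{-1}$, but the content is identical). Part (3) is handled the same way in both.
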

\begin{proof}
	\begin{enumerate}[{\rm (1)}]
		\item By (G2), we can write
		$\mathcal{A}(E,\sigma)=T(V)/(R_{1})$ where
		$$
                R_{1}=\{ f \in V \otimes_{k} V\,|\,f(p,\sigma(p))=0\,\,{\rm for\,\,any\,\,} p \in E  \},
                $$
		and $\mathcal{A}(E,\sigma\overline{\phi^{\ast}})=T(V)/(R_{2})$ where
		$$
                R_{2}=\{ f \in V \otimes_{k} V\,|\,f(p,\sigma\overline{\phi^{\ast}}(p))=0\,\,{\rm for\,\,any\,\,} p \in E  \}.
                $$
		Since $\overline{\phi^{\ast}} \in {\rm Aut}_{k}\,E$,
		\begin{align*}
		f \in R_{2}
		&\Longleftrightarrow f(p,\sigma\overline{\phi^{\ast}}(p))=0\,\,{\rm for\,\,any\,\,} p \in E \\
		&\Longleftrightarrow f\left(\left(\overline{\phi^{\ast}}\right)^{-1}(p),\sigma(p)\right )=0
		\,\,{\rm for\,\,any\,\,} p \in E \\
		&\Longleftrightarrow \left( (\phi^{-1} \otimes {\rm id})(f)  \right)(p,\sigma(p))=0\,\,{\rm for\,\,any\,\,} p \in E \\
		&\Longleftrightarrow (\phi^{-1} \otimes {\rm id})(f) \in R_{1} \\
		&\Longleftrightarrow f \in (\phi \otimes {\rm id})(R_{1})=:R_{1}^{\phi},
		\end{align*}
                so 
		$R_{2}=R_{1}^{\phi}$.
		
		\item  Suppose that $\mathcal{P}(A)=(E,\sigma)$, that is,
		$\mathcal{V}(R)=\{ (p,\sigma(p)) \in \mathbb{P}(V^{\ast})\times\mathbb{P}(V^{\ast})\,|\,p \in E \}$.
		Since $\overline{\phi^{\ast}} \in {\rm Aut}_{k}\,E$, 
		\begin{align*}
		(p,q) \in \mathcal{V}(R^{\phi})
		&\Longleftrightarrow g(p,q)=0\,\,{\rm for\,\,any\,\,} g \in R^{\phi} \\
		&\Longleftrightarrow \left( (\phi \otimes {\rm id})(f) \right)(p,q)=0\,\,{\rm for\,\,any\,\,} f \in R \\
		&\Longleftrightarrow f \left( \overline{\phi^{\ast}}(p),q \right)=0\,\,{\rm for\,\,any\,\,} f \in R \\
		&\Longleftrightarrow \left( \overline{\phi^{\ast}}(p),q \right) \in \mathcal{V}(R) \\
		&\Longleftrightarrow q=\sigma\overline{\phi^{\ast}}(p), p \in E \\
		&\Longleftrightarrow (p,q) \in
		\{ (p,\sigma\overline{\phi^{\ast}}(p))\in \mathbb{P}(V^{\ast})\times\mathbb{P}(V^{\ast})\,|\,p \in E \},
		\end{align*}
		so
		$\mathcal{P}(A^{\phi})=(E,\sigma\overline{\phi^{\ast}})$.
		
		\item 	Suppose that $A$ is geometric with $\mathcal{P}(A)=(E,\sigma)$.
		Since $\mathcal{P}(A)=(E,\sigma)$, $\mathcal{P}(A^{\phi})=(E,\sigma\overline{\phi^{\ast}})$ by ($2$).
		Since $A=\mathcal{A}(\mathcal{P}(A))=\mathcal{A}(E,\sigma)$,
		$\mathcal{A}(\mathcal{P}(A^{\phi}))=\mathcal{A}(E,\sigma\overline{\phi^{\ast}})=\mathcal{A}(E,\sigma)^{\phi}
		=A^{\phi}$ by ($1$).
	\end{enumerate}
\end{proof}
\begin{defn}
	Let $X,Y \subset \mathbb{P}(V)$ be two projective varieties.
	We say that $X$ and $Y$ are \textit{projectively equivalent}
	if there exists an isomorphism $\phi:X \rightarrow Y$ 
        which extends to an automorphism of $\mathbb{P}(V)$. 
	We call $\phi$ a {\it projective equivalence} from $X$ to $Y$.
\end{defn}

The following theorem tells us that 
classifying geometric algebras is equivalent to 
classifying geometric pairs. 
\begin{thm}[{{\cite[Remark 4.9]{Mo1}, cf. \cite{ATV1}}}]
	\label{gradediso}
	Let $A=\mathcal{A}(E,\sigma)$ and $A'=\mathcal{A}(E',\sigma')$ 
        be geometric algebras.
	Then $A$ is isomorphic to $A'$ as graded $k$-algebras 
	if and only if there is a projective equivalence $\phi$
	from $E$ to $E'$, such that the following diagram commutes{\rm : }
	$$
	\begin{CD}
	E @>\phi >> E'\\
	@V\sigma VV @VV \sigma' V\\
	E @>\phi >> E'
	\end{CD} $$
\end{thm}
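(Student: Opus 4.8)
The plan is to reduce both the algebraic condition ``$A\cong A'$'' and the geometric condition ``there is a commuting square of projective equivalences'' to the single linear-algebra statement that the spaces of defining relations of $A$ and $A'$ differ by a map of the form $\phi\otimes\phi$ with $\phi\in{\rm GL}(V)$. The main tool is the adjunction recorded just before Proposition~\ref{ge}: for $g\in V\otimes_k V$ and $\phi,\psi\in{\rm GL}(V)$ one has $\big((\phi\otimes\psi)(g)\big)(p,q)=0$ if and only if $g\big(\overline{\phi^{\ast}}(p),\overline{\psi^{\ast}}(q)\big)=0$. Since a graded algebra isomorphism between algebras generated in degree $1$ (respectively, a projective equivalence of point schemes) must identify the degree-$1$ parts, I first fix an identification of the underlying vector spaces so that $A=T(V)/(R)$ and $A'=T(V)/(R')$ with $E,E'\subset\mathbb{P}(V^{\ast})$; recall that then every element of ${\rm Aut}_k\mathbb{P}(V^{\ast})$ has the form $\overline{\psi^{\ast}}$ for some $\psi\in{\rm GL}(V)$.

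For the forward implication, given a graded algebra isomorphism $A\to A'$ I restrict it to $V$ to get $\psi\in{\rm GL}(V)$; arguing as in Lemma~\ref{lem1} this forces $(\psi\otimes\psi)(R)=R'$. Running the adjunction above then shows that $\rho:=\overline{\psi^{\ast}}$ maps $\mathcal{V}(R')$ bijectively onto $\mathcal{V}(R)$. Because $A$ and $A'$ are geometric, condition (G1) identifies these zero sets with the graphs $\{(p,\sigma(p)):p\in E\}$ and $\{(p',\sigma'(p')):p'\in E'\}$; projecting onto the first coordinate shows $\rho$ restricts to an isomorphism of varieties $E'\to E$ intertwining $\sigma'$ with $\sigma$, so $\phi:=(\rho|_{E'})^{-1}:E\to E'$ is a projective equivalence (it extends to $\overline{(\psi^{\ast})^{-1}}\in{\rm Aut}_k\mathbb{P}(V^{\ast})$) satisfying $\phi\sigma=\sigma'\phi$.

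For the converse, given a projective equivalence $\phi:E\to E'$ with extension $\bar\phi\in{\rm Aut}_k\mathbb{P}(V^{\ast})$ and $\phi\sigma=\sigma'\phi$, I write $\bar\phi=\overline{(\psi^{-1})^{\ast}}$ for a suitable $\psi\in{\rm GL}(V)$. Using (G2), $R=\{f\in V\otimes_k V:f(p,\sigma(p))=0\ \forall p\in E\}$ and similarly for $R'$. Substituting $p'=\bar\phi(p)$ (a bijection $E\to E'$), using $\sigma'\bar\phi=\bar\phi\sigma$ on $E$, and applying the adjunction identity, a short computation gives $f'\in R'\iff(\psi^{-1}\otimes\psi^{-1})(f')\in R$, that is $R'=(\psi\otimes\psi)(R)$; then Lemma~\ref{lem1} yields $A\cong A'$.

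The bulk of the work is purely bookkeeping — keeping track of which maps get dualized and which get inverted. The one genuinely essential point is that the two directions use the defining conditions of a geometric algebra asymmetrically: (G1) is what lets us recover the pair $(E,\sigma)$ from $R$ in the forward direction, while (G2) is what lets us recover $R$ from $(E,\sigma)$ in the converse, and the identity $A=\mathcal{A}(\mathcal{P}(A))$ built into ``geometric'' is precisely what makes these two recoveries mutually inverse.
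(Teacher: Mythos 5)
Your argument is correct: both directions reduce cleanly to Lemma \ref{lem1} via the adjunction $((\phi\otimes\psi)(g))(p,q)=g(\overline{\phi^{\ast}}(p),\overline{\psi^{\ast}}(q))$, with (G1) used to extract the commuting square from $R'=(\psi\otimes\psi)(R)$ and (G2) used to reverse this. The paper itself gives no proof but simply cites \cite[Remark 4.9]{Mo1}, and your proof is essentially the standard argument found there, so there is nothing to flag.
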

\begin{thm}[{\cite[Theorem 4.7]{Mo1}}]
	\label{gradedequi}
	Let $A=\mathcal{A}(E,\sigma)$ and $A'=\mathcal{A}(E',\sigma')$ 
        be geometric algebras.
	Then ${\rm GrMod}\,A \cong {\rm GrMod}\,A'$ 
	if and only if there exists a sequence $\{ \phi_{i}\}_{i \in \mathbb{Z}}$
	of projective equivalences from $E$ to $E'$ 
	such that the following diagram commute for all  $i \in \mathbb{Z}${\rm : }
	$$
	\begin{CD}
	E @>\phi_{i} >> E'\\
	@V\sigma VV @VV \sigma ' V\\
	E @>\phi_{i+1} >> E'
	\end{CD}$$
\end{thm}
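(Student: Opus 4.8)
The statement to prove is Theorem \ref{gradedequi}: for geometric algebras $A=\mathcal{A}(E,\sigma)$ and $A'=\mathcal{A}(E',\sigma')$, we have ${\rm GrMod}\,A \cong {\rm GrMod}\,A'$ if and only if there is a sequence $\{\phi_i\}_{i\in\mathbb Z}$ of projective equivalences from $E$ to $E'$ making the stated ladder of squares commute. Since this is cited as \cite[Theorem 4.7]{Mo1}, the natural route is to reduce graded Morita equivalence to the twist operation and then feed everything into Proposition \ref{ge} together with Theorem \ref{gradediso}. The key external input I would invoke is Zhang's theorem (\cite{Z}, of which Theorem \ref{twist} is the quadratic shadow): two connected graded algebras generated in degree one are graded Morita equivalent if and only if one is isomorphic to a twist of the other by a graded algebra automorphism — and, more precisely in the $\mathbb Z$-graded/Zhang twist formulation, that a graded Morita equivalence is encoded by a \emph{twisting system}, i.e. a sequence of degree-preserving linear isomorphisms indexed by $\mathbb Z$ satisfying a cocycle condition. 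That sequence is exactly what will produce the $\{\phi_i\}$.

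\textbf{The ``if'' direction.} Suppose we are given the commuting ladder. I would first handle the case of a single projective equivalence $\phi$ with $\phi\sigma=\sigma'\phi$: by Theorem \ref{gradediso} this already gives $A\cong A'$, hence certainly ${\rm GrMod}\,A\cong{\rm GrMod}\,A'$. The point of allowing a \emph{sequence} is that each $\phi_i$ differs from the next by an automorphism of $E$ commuting suitably with $\sigma$, and the composite data assemble into a twisting system on $A$ whose twisted algebra is isomorphic to $A'$. Concretely: fix $\phi_0$, and for each $i$ set $\tau_i := \phi_0^{-1}\phi_i \in \Aut_k E$ (extending to $\PGL$); the commutativity of the squares translates into a relation among the $\tau_i$, $\sigma$ of exactly the twisting-system shape. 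Then one shows $A' \cong \mathcal{A}(E,\sigma)^{\{\tau_i\}}$, the Zhang twist of $A$ by this system, using the (G2) description of defining relations: an element $f\in V\otimes V$ lies in the relations of the twist iff $f$ vanishes at the twisted incidence locus, and the displayed computation in the proof of Proposition \ref{ge}(1) — $((\phi\otimes\mathrm{id})f)(p,q)=0 \Leftrightarrow f(\overline{\phi^*}(p),q)=0$ — is precisely the tool, iterated degreewise. Finally Theorem \ref{twist} (Zhang) gives ${\rm GrMod}\,A \cong {\rm GrMod}\,A^{\{\tau_i\}} = {\rm GrMod}\,A'$.

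\textbf{The ``only if'' direction.} Assume ${\rm GrMod}\,A\cong{\rm GrMod}\,A'$. By Zhang's theorem in twisting-system form, $A'$ is isomorphic to a twist $A^{\theta}$ of $A$ by some twisting system $\theta=\{\theta_i\}_{i\in\mathbb Z}$ of degree-preserving linear automorphisms of $A$; restricting to degree one gives $\phi_i := \theta_i|_V \in \GL(V)$. Applying Proposition \ref{ge}(3) inductively (each single twist $A \mapsto A^{\phi}$ sends the geometric pair $(E,\sigma)$ to $(E,\sigma\overline{\phi^*})$ and preserves being geometric), one sees that all the twists involved have point scheme $E$, so each $\overline{\phi_i^*}$ restricts to an automorphism of $E$, i.e. a projective equivalence $E\to E'$ after composing with the fixed isomorphism realizing $A'\cong A^{\theta}$ geometrically via Theorem \ref{gradediso}. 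The cocycle condition defining a twisting system is then unwound to give exactly the commuting squares $\sigma'\phi_i=\phi_{i+1}\sigma$.

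\textbf{Main obstacle.} The delicate point is not the geometry but bookkeeping the dictionary between the three languages: (a) graded Morita equivalence, (b) Zhang twisting systems / the cocycle condition, and (c) the indexed family of projective equivalences. In particular one must be careful that a Zhang twist by a \emph{system} (not merely by a single automorphism) is the right notion here — a single $\varphi\in\GrAut_k A$ as in Definition \ref{twa} is not general enough to capture every graded Morita equivalence — and that restricting the twisting-system data to degree one loses no information because $A$ is generated in degree one and the relations are quadratic, so the whole system is determined by its degree-one part via the multiplicativity/cocycle identity. Verifying that this restriction-to-$V$ map is a bijection between twisting systems up to equivalence and sequences $\{\phi_i\}\subset\GL(V)$ satisfying the compatibility with $R$, and that under it the cocycle condition becomes the commuting ladder, is the technical heart of the argument; everything else is then a direct application of Proposition \ref{ge} and Theorems \ref{twist} and \ref{gradediso}.
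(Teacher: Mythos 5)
Your proposal is correct in outline and follows the standard argument for this result; note that the paper itself gives no proof, quoting the theorem directly from \cite[Theorem 4.7]{Mo1}, and your route --- Zhang's twisting-system characterization of graded Morita equivalence, truncation of the twisting system to degree one to get a sequence in ${\rm GL}(V)$ with $(\phi_n\otimes\phi_{n+1})(R)=R'$, and then the $\mathcal{V}(R)$-dictionary of Proposition \ref{ge} to convert that into the commuting ladder --- is precisely the argument of the cited source. The one point you should phrase more carefully is the ``only if'' direction: a twisting system is not in general a composite of single twists, so rather than ``applying Proposition \ref{ge}(3) inductively'' one applies the displayed equivalence $((\phi\otimes\psi)(f))(p,q)=0\Leftrightarrow f(\overline{\phi^{\ast}}(p),\overline{\psi^{\ast}}(q))=0$ directly to the relation $(\phi_n\otimes\phi_{n+1})(R)=R'$, using (G1) and (G2) for both algebras.
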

\subsection{AS-regular algebras}
Artin and Schelter \cite{AS} defined a class of regular algebras
which are main objects of study in noncommutative algebraic geometry.
\begin{defn}[\cite{AS}]
	A connected graded algebra $A$ is called 
	a {\it $d$-dimensional Artin-Schelter regular 
	{\rm (simply} AS-regular{\rm )} algebra} 
	if $A$ satisfies the following conditions: 
	\begin{enumerate}[(i)]
		\item ${\rm gldim}\,A=d<\infty$, 
		\item ${\rm GKdim}\,A:={\rm inf}\{\alpha\in \mathbb{R} \mid {\rm dim}_{k}(\sum_{i=0}^{n}A_{i})\leq n^{\alpha}\text{ for all }n \gg 0\}<\infty$, and,
		\item ({\it Gorenstein condition})\quad 
		${\rm Ext}_{A}^{i}(k,A)=\left\{
		\begin{array}{ll}
		k   &\quad (i=d), \\
		0   &\quad (i\neq d). 
		\end{array}
		\right.$
	\end{enumerate}
\end{defn}
A $3$-dimensional AS-regular algebra $A$ finitely generated in degree $1$
is one of the following forms: 
\[
A=k\langle x, y, z \rangle/(f_{1},f_{2},f_{3})
\]
where $f_{i}$ are homogeneous polynomials of degree $2$ (the quadratic case), or
\[
A=k\langle x, y\rangle/(g_{1}, g_{2})
\]
where $g_{i}$ are homogeneous polynomials of degree $3$ (the cubic case) 
(see {\cite[Theorem 1.5]{AS}}). 
Our main focus of this paper is to study $3$-dimensional quadratic AS-regular algebras. 
\begin{thm}[\cite{ATV1}]
	\label{thmATV1}
	Every $3$-dimensional quadratic AS-regular algebra $A$ 
	is geometric. 
	Moreover, the point scheme $E$ of $A$ is either $\mathbb{P}^{2}$ or 
	a cubic divisor in $\mathbb{P}^{2}$. 
\end{thm}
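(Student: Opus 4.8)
The plan is to follow the argument of Artin--Tate--Van den Bergh \cite{ATV1}. First I would extract from the AS-regularity hypotheses the homological skeleton of $A$: since $\gldim A=3$, $\GKdim A<\infty$, and $A$ satisfies the Gorenstein condition, the minimal graded free resolution of the trivial module $k$ is forced to have the shape
\[
0 \longrightarrow A(-3) \longrightarrow A(-2)^{3} \longrightarrow A(-1)^{3} \longrightarrow A \longrightarrow k \longrightarrow 0 ,
\]
equality of the two middle Betti numbers being a consequence of the self-duality imposed by the Gorenstein condition together with Hilbert-series bookkeeping. This gives $\dim_{k}A_{1}=3$, Hilbert series $H_{A}(t)=(1-t)^{-3}$, and shows $A=T(V)/(R)$ is a quadratic algebra with $\dim_{k}V=3$ and $\dim_{k}R=3$.

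Next I would pass to point modules. A point module over $A$ is a cyclic graded right module with Hilbert series $(1-t)^{-1}$; recording its components of degrees $1$ and $2$ produces a pair $(p,q)\in\mathbb{P}(V^{\ast})\times\mathbb{P}(V^{\ast})$, and compatibility with the degree-$2$ relations says precisely that $(p,q)$ lies on the scheme $\mathcal{V}(R)$ of Definition \ref{geometric}. Using the resolution above one checks that each such length-$3$ datum extends uniquely to a full point module, so the point modules of $A$ are parametrized by the closed subscheme $\Gamma:=\mathcal{V}(R)\subset\mathbb{P}(V^{\ast})\times\mathbb{P}(V^{\ast})$, and everything is reduced to understanding $\Gamma$.

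The heart of the proof, and the step I expect to be the main obstacle, is to show that $\Gamma$ is the graph of an automorphism. Writing the three relations as $M(p)\mathbf{q}=0$ for a $3\times 3$ matrix $M(p)$ of linear forms in $p$, the first projection has image $E:=\mathrm{pr}_{1}(\Gamma)=\{\,p\mid\det M(p)=0\,\}$, which is either a cubic divisor in $\mathbb{P}(V^{\ast})\cong\bbP$ or, when $\det M\equiv 0$, all of $\bbP$. On the open locus where $M(p)$ has corank exactly one, $q$ is uniquely determined, so over it $\Gamma$ is the graph of a rational map $\sigma\colon E\dashrightarrow\mathbb{P}(V^{\ast})$. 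The substantive claims, which genuinely use the homological input (through the resolution and the behaviour of point modules under the truncation-and-shift operation), are: (a) the second projection $\mathrm{pr}_{2}\colon\Gamma\to\mathbb{P}(V^{\ast})$ has the same image $E$; and (b) both $\mathrm{pr}_{1}$ and $\mathrm{pr}_{2}$ are isomorphisms onto $E$. Granting these, $\sigma:=\mathrm{pr}_{2}\circ\mathrm{pr}_{1}^{-1}\in\Aut_{k}E$ is a genuine automorphism, defined on all of $E$, with $\Gamma=\{(p,\sigma(p))\mid p\in E\}$; this is exactly condition (G1), so $\mathcal{P}(A)=(E,\sigma)$.

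It remains to verify (G2), which upgrades $A$ to a geometric algebra in the sense of Definition \ref{geometric}. I would prove $R=\{\,f\in V\otimes_{k}V\mid f(p,\sigma(p))=0\ \text{for all }p\in E\,\}$: the right-hand side obviously contains $R$, and a dimension count shows it cannot be strictly larger, since the vanishing conditions along the graph of $\sigma$ impose codimension at most $6$ on $V\otimes_{k}V$ (codimension exactly $6$ when $E$ is a cubic curve, while the case $E=\bbP$ is the twist of a polynomial ring by some automorphism of $\bbP$, handled directly via Proposition \ref{ge}), so the right-hand side has dimension at least $3=\dim_{k}R$ and hence equals $R$. Then $A=\mathcal{A}(E,\sigma)$ with $\mathcal{P}(A)=(E,\sigma)$, so $A$ is geometric, and $E$ is either $\bbP$ or a cubic divisor in $\bbP$, as asserted. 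The point where regularity is truly indispensable --- rather than merely the quadratic form of the relations --- is step (b): promoting the birational correspondence $\Gamma$ to the graph of an automorphism defined even at the points of $E$ where $M(p)$ drops corank.
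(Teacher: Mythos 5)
The paper does not prove this statement at all: it is quoted from Artin--Tate--Van den Bergh and the ``proof'' here is the citation \cite{ATV1} (together with \cite[Definition 4.3]{Mo1} for the non-reduced case). Your sketch reconstructs the standard ATV1 argument --- free resolution of $k$, point modules, the multilinearization $\Gamma=\mathcal{V}(R)$, the determinantal cubic, and the promotion of $\Gamma$ to the graph of an automorphism --- and the overall architecture is the right one, with the genuinely hard steps (stabilization of truncated point modules, bijectivity of both projections) correctly identified as the places where regularity enters.

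There are, however, two concrete problems with the final (G2) step. First, your dimension count points the wrong way: ``codimension at most $6$'' gives $\dim\{f\mid f(p,\sigma(p))=0\ \forall p\in E\}\geq 9-6=3$, which is the trivial direction (it already follows from $R$ being contained in that space). To conclude equality with $R$ you need the \emph{opposite} bound, $\dim\leq 3$, i.e.\ that the restriction map $V\otimes_k V\to H^0(\Gamma,\mathcal{O}(1,1))$ is surjective onto a $6$-dimensional target; for a cubic curve this amounts to surjectivity of $H^0(L)\otimes H^0(\sigma^{*}L)\to H^0(L\otimes\sigma^{*}L)$ for degree-$3$ line bundles, which is true but must be proved, not inferred from ``at most.'' Second, the theorem as stated allows $E$ to be a non-reduced cubic divisor (double and triple lines, Types WL and TL in this paper), and there your whole pointwise formulation of (G1)/(G2) --- ``$f(p,\sigma(p))=0$ for all $p\in E$'' and the $h^0$ computation on a reduced curve --- breaks down; the correct statement requires the scheme-theoretic definition of geometric algebra from \cite[Definition 4.3]{Mo1}, as the remark following the theorem warns. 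So your proposal is a fair outline of the reduced case but does not yet cover the full statement.
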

\begin{rem}
	In the above theorem, $E\subset \mathbb{P}^{2}$ could be a 
	 non-reduced cubic divisor in $\mathbb{P}^{2}$.
	See \cite[Definition 4.3]{Mo1} for the definition of a geometric algebra 
	in the case that $E$ is non-reduced. 
\end{rem}
We call a geometric pair $(E,\sigma)$ {\it regular} 
if $(E,\sigma)=\mathcal{P}(A)$
for some $3$-dimensional quadratic AS-regular 
algebra $A$. 
The above theorem shows that
the classification of $3$-dimensional quadratic AS-regular algebras reduces 
to the classification of regular geometric pairs.

The types of regular geometric pairs are defined in \cite{MU} 
which are slightly modified from the original types 
defined in \cite{AS} and \cite{ATV1}. 
We extend the types defined in \cite{MU} as follows
(since ${\rm Aut}_{k}\,\mathbb{P}^{n-1} \cong {\rm PGL}_{n}(k)$, 
we often identify $\sigma \in {\rm Aut}_{k}\mathbb{P}^{n-1}$ 
with the representing matrix 
$\sigma \in {\rm PGL}_{n}(k)$): 
\begin{description}
	\item[{\rm (1) Type P}] 
	$E$ is $\mathbb{P}^{2}$, 
	and $\sigma \in \mathrm{Aut}_{k}\mathbb{P}^{2} \cong \mathrm{PGL}_{3}(k)$ 
	(Type P is divided into 
	Type P$_{i}$ ($i=1,2,3$) in terms of the Jordan canonical form of $\sigma$). 
	\item [{\rm (2-1) Type S$_{1}$}] 
	$E$ is a triangle, and $\sigma$ stabilizes each component. 
	\item[{\rm (2-2) Type S$_{2}$}]
	$E$ is a triangle, and $\sigma$ interchanges two of its components. 
	\item[{\rm (2-3) Type S$_{3}$}]
	$E$ is a triangle, and $\sigma$ circulates three components. 
	\item[{\rm (3-1) Type S$'_{1}$}] 
	$E$ is a union of a line and a conic meeting at two points, 
	and $\sigma$ stabilizes each component 
	and two intersection points.
	\item[{\rm (3-2) Type S$'_{2}$}]
	$E$ is a union of a line and a conic meeting at two points, 
	and $\sigma$ stabilizes each component 
	and interchanges two intersection points.
	\item[{\rm (4-1) Type T$_{1}$}]
	$E$ is a union of three lines meeting at one point, 
	and $\sigma$ stabilizes each component.
	\item[{\rm (4-2) Type T$_{2}$}]
	$E$ is a union of three lines meeting at one point, 
	and $\sigma$ interchanges two of its components. 
	\item[{\rm (4-3) Type T$_{3}$}]
	$E$ is a union of three lines meeting at one point, 
	and $\sigma$ circulates three components.
	\item[{\rm (5) Type T$'$}]
	$E$ is a union of a line and a conic meeting at one point, 
	and $\sigma$ stabilizes each component. 
	\item[{\rm (6) Type CC}] $E$ is a cuspidal cubic curve.  
	\item[{\rm (7) Type NC}]
	$E$ is a nodal cubic curve 
	(Type NC is divided into Type NC$_{i}$ $(i=1,2)$). 
	\item[{\rm (8) Type WL}]$E$ is a union of a double line and a line 
	(Type WL is divided into Type WL$_{i}$ $(i=1,2,3)$). 
	\item[{\rm (9) Type TL}] $E$ is a triple line 
	(Type TL is divided into Type TL$_{i}$ $(i=1,2,3,4)$). 
	\item[{\rm (10) Type EC}] $E$ is an elliptic curve. 
\end{description}

\begin{exm}[{\cite[Example 4.10]{Mo1}}]
	\label{exm_Mo1}
	$3$-dimensional quadratic AS-regular algebras 
        $A=\mathcal{A}(E,\sigma)$ of Type $S_{1}$
	are classified by the following steps.
		 \item[{\rm Step 0:}] Since $E$ is a union of three lines making a triangle,
		$E$ is projectively equivalent to
		$\mathcal{V}(xyz)=\mathcal{V}(x)\cup\mathcal{V}(y)\cup\mathcal{V}(z)$, so
		we may assume that $E=\mathcal{V}(xyz)=\mathcal{V}(x)\cup\mathcal{V}(y)\cup\mathcal{V}(z)$
		by Theorem \ref{gradediso}.
		\item[{\rm Step 1:}] Since $\sigma\in {\rm Aut}_{k}E$ stabilizes each component,
		$\sigma \in {\rm Aut}_{k}E$ is given by
		\begin{align*}
		\sigma|_{\mathcal{V}(x)}(0:b:c)&=(0:b:\al c), \\
		\sigma|_{\mathcal{V}(y)}(a:0:c)&=(\be a:0:c), \\
		\sigma|_{\mathcal{V}(z)}(a:b:0)&=(a:\ga b:0),
		\end{align*}
		where $\al,\be,\ga \in k$ and $\al \be \ga \neq 0, 1$.
		\item[{\rm Step 2:}] By using (G2) condition in Definition \ref{geometric},
		we can compute the defining relation of $A=\mathcal{A}(E,\sigma)$ as
		$$yz-\al zy,\,\,zx - \be xz,\,\,xy-\ga yx.$$ \\
		Let $A'$ be another algebra of Type $S_{1}$ 
		with the defining relations 
		$$
		yz - \al' zy,\,zx-\be'xz,\,xy-\ga'yx, 
		$$
		where $\al',\be',\ga' \in k$ and $\al' \be' \ga' \neq 0, 1$.
		\item[{\rm Step 3:}] By Theorem \ref{gradediso}, we can show that
		$A\cong A'$ as graded $k$-algebras 
		if and only if 
		$$
		(\al',\be',\ga')=
		\begin{cases}
		(\al,\be,\ga),\,
		(\be,\ga,\al),\,
		(\ga,\al,\be),\\
		(\al^{-1},\ga^{-1},\be^{-1}),\,
		(\be^{-1},\al^{-1},\ga^{-1}),\,
		(\ga^{-1},\be^{-1},\al^{-1}).
		\end{cases}
		$$ 
		\item[{\rm Step 4:}] By Theorem \ref{gradedequi}, we can show that
		$\mathrm{GrMod} A \cong  \mathrm{GrMod} A'$ 
		if and only if 
		$\al'\be'\ga'=(\al\be\ga)^{\pm 1}$. 
	
\end{exm}
The purpose of this paper 
is to expand the above example to the remaining types. 
\section{Defining relations for non Type EC algebras}
\label{sec_main}
The following theorem lists all possible defining relations of algebras 
in each type up to isomorphism except for Type EC.
\begin{thm} [\cite{E,K,KMM,Ma,O}]
\label{thm_main}
Let $A=\A$ be a $3$-dimensional quadratic AS-regular algebra. 
For each type except for Type EC, the following table describes 
\begin{description}
\item[{\rm (I)}] the defining relations of $A$, and
\item[{\rm (II)}] the conditions to be isomorphic as graded algebras 
      in terms of their defining relations. 
      {\rm (}see Example \ref{exm_Mo1}.{\rm )}
\end{description}
In the following table, 
if $X \neq Y$ or $i \neq j$, then Type X$_{i}$ algebra 
is not isomorphic to any Type Y$_{j}$ algebra.
\end{thm}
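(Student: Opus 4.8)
The plan is to carry out, type by type, the four-step procedure of Example \ref{exm_Mo1}, using Theorem \ref{thmATV1} to pin down the point scheme and Theorem \ref{gradediso} as the dictionary translating isomorphism of the geometric algebra $\mathcal{A}(E,\sigma)$ into a commuting projective equivalence of the pair $(E,\sigma)$. \emph{Step 0: normalize $E$.} By Theorem \ref{thmATV1} the point scheme of a $3$-dimensional quadratic AS-regular algebra is either $\bbP$ itself (Type P) or a cubic divisor in $\bbP$, and by Theorem \ref{gradediso} we may freely replace $(E,\sigma)$ by any projectively equivalent pair. In the divisor case $E$ is projectively equivalent to exactly one curve in the standard list (nodal, cuspidal, a conic meeting a line in two or in one point, a triangle, three concurrent lines, a line plus a double line, a triple line), so each non-EC type singles out a fixed representative $E \subset \bbP$ with an explicit equation — for instance $\mathcal{V}(xyz)$ for a triangle or $\mathcal{V}(x^3)$ for a triple line — and the problem reduces to classifying pairs $(E,\sigma)$ for these finitely many fixed schemes.

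\emph{Steps 1 and 2: find $\sigma$, impose regularity, compute relations.} For each fixed $E$ I would determine the automorphisms induced by $\mathrm{Aut}_k\bbP\cong\mathrm{PGL}_3(k)$ — for reducible or non-reduced $E$ this means working with scheme automorphisms and the extended notion of geometric algebra of \cite[Definition 4.3]{Mo1} — and keep exactly those $\sigma$ realizing the combinatorial pattern defining the type (stabilizing each component, interchanging two, circulating three, fixing or swapping the singular or intersection points, etc.). This produces, in each type, an explicit family $(E,\sigma_{\al,\be,\dots})$ depending on finitely many scalar parameters, possibly subject to inequality conditions (the analogue of $\al\be\ga\neq 0,1$ in Example \ref{exm_Mo1}), the conditions being exactly those that make $(E,\sigma)$ a \emph{regular} pair; I would verify regularity through the criteria of \cite{ATV1}, equivalently by checking that $\mathcal{A}(E,\sigma)$ has the Hilbert series of the polynomial ring and satisfies the Gorenstein condition. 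Solving the linear system imposed by (G2), namely $R=\{\, f\in V\otimes_k V \mid f(p,\sigma(p))=0 \text{ for all } p\in E\,\}$, then yields the three quadratic relations as explicit polynomials in the parameters, producing column (I) of the table.

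\emph{Step 3: isomorphism classification and separation of types.} Within one type, Theorem \ref{gradediso} gives $\mathcal{A}(E,\sigma)\cong\mathcal{A}(E',\sigma')$ iff some $\phi\in\mathrm{Aut}_k\bbP$ restricts to an isomorphism $E\to E'$ with $\phi\sigma=\sigma'\phi$; parametrizing such $\phi$ relative to the chosen normal form of $E$ identifies the group acting on parameter space and hence the equivalence relation recorded in column (II) (the analogue of the $S_3$-together-with-inversion action in Example \ref{exm_Mo1}). For the final assertion, an isomorphism of geometric algebras again forces a commuting projective equivalence of point schemes: when the underlying curves of two (sub)types are not projectively equivalent this is impossible, and when they coincide as curves (as for S$_1$ versus S$_2$ versus S$_3$, or for the subdivisions of P, NC, WL, TL) the commuting condition would force $\sigma$ and $\sigma'$ to share the same component-permutation pattern or the same conjugacy invariants, which is exactly what distinguishes the subtypes.

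The main obstacle is organizational and computational rather than conceptual: the non-EC list is long (P$_1$–P$_3$, S$_1$–S$_3$, S$'_1$–S$'_2$, T$_1$–T$_3$, T$'$, CC, NC$_1$–NC$_2$, WL$_1$–WL$_3$, TL$_1$–TL$_4$), and the genuinely delicate cases are the degenerate ones — the nodal and cuspidal cubics and especially the double-line and triple-line point schemes — where one must compute the scheme-theoretic automorphism group, pin down precisely which $\sigma$ give regular pairs, and keep the normalization of $E$ consistent across isomorphic algebras. It is exactly this careful bookkeeping in the nodal case (Type NC) that turns up an algebra not appearing among those classified in \cite{NVZ}.
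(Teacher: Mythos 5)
Your overall scaffolding (Steps 0--4 modeled on Example \ref{exm_Mo1}, with Theorem \ref{gradediso} governing Step 3) is indeed the paper's approach for the reduced point schemes, but there are two concrete problems with the way you fill in the steps.

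First, in Step 1 you propose to take for $\sigma$ ``the automorphisms induced by $\mathrm{Aut}_k\,\mathbb{P}^2\cong\mathrm{PGL}_3(k)$.'' That is not what a geometric pair requires: $\sigma$ is an arbitrary automorphism of the scheme $E$, and for most types the relevant $\sigma$ do \emph{not} extend to $\mathbb{P}^2$. Concretely, for Type S$_1$ an element of $\mathrm{PGL}_3(k)$ stabilizing all three lines of the triangle $\mathcal{V}(xyz)$ must be diagonal, and a diagonal matrix induces the $\sigma$ of Example \ref{exm_Mo1} only with $\al\be\ga=1$ --- precisely the excluded value; your restriction would leave Type S$_1$ empty. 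Likewise the automorphisms $\sigma_1,\sigma_2$ of the nodal cubic displayed in Section \ref{sec_main} are quadratic in the coordinates. The extension-to-$\mathbb{P}^2$ condition belongs only to the equivalence $\phi$ in Theorem \ref{gradediso}, not to $\sigma$ itself. The paper's Step 1 computes all of $\mathrm{Aut}_k\,E$ directly: componentwise M\"obius transformations compatible with the intersection points for unions of lines and conics, and, for the irreducible singular cubics CC and NC, a lift along the normalization $\pi\colon\mathbb{P}^1\to\Tilde{E}\to E$ --- which is exactly how the overlooked automorphism $\sigma_2$, and hence the new Type NC$_2$ algebra of Remark \ref{rem_typeNC}, is found.

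Second, for the non-reduced point schemes (Types WL and TL) the paper does not run the geometric (G2) machinery at all. Instead it invokes Lemma \ref{twistWLTL} (from \cite{ATV2}): every such algebra is a Zhang twist of one of three fixed algebras $B_1,B_2,B_3$, so one enumerates the graded automorphisms of the $B_i$, writes down the twisted relations from Definition \ref{twa}, and classifies up to isomorphism with Lemma \ref{lem1} and up to graded Morita equivalence with Theorem \ref{twist}. Your plan to determine the scheme automorphisms of a double or triple line and to impose the non-reduced version of (G2) from \cite[Definition 4.3]{Mo1} is a genuinely different and substantially harder route, and you give no indication of how it would actually be carried out; as written, these cases remain a gap.
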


\noindent
{\renewcommand\arraystretch{1.5} 
\begin{tabular}{|p{0.7cm}|p{4.3cm}|p{6.3cm}|} \hline
 {\footnotesize Type}  &\quad (I) defining relations
       &\quad (II) condition to be  \\ 
 &\quad ($\alpha,\beta,\gamma \in k$) 
       &\quad graded algebra isomorphic \\ 
            \hline
\end{tabular}}

\noindent{\renewcommand\arraystretch{1.5} 
\begin{tabular}{|p{0.7cm}|p{4.3cm}|p{6.3cm}|} \hline
       P$_{1}$   
            &
            $
            \begin{cases}
            \al xy - \be yx ,\\
            \be yz- \gamma zy,\\
            \gamma zx-\alpha xz
            \end{cases}\hspace{-10pt}
            $
            ($\alpha\beta\gamma \neq 0$)
            &\quad
\begin{minipage}{6cm}
            \rule{0pt}{12pt}$
            (\alpha':\beta':\gamma')$
            \\
            $=
            \begin{cases}
            (\alpha:\beta:\gamma),\,(\alpha:\gamma:\beta),\\
            (\beta:\alpha:\gamma),\,(\beta:\gamma:\alpha),\\
            (\gamma:\alpha:\beta),(\gamma:\beta:\alpha)
            \end{cases}
            \text{ in }\mathbb{P}^{2}
            $
\end{minipage}
            \\ \hline
     P$_{2}$  &
            $
            \begin{cases}
            xy-yx+y^{2},\\
            xz-\alpha zx+\alpha zy,\\
            yz-\alpha zy \ \ \ \ \ \ \ \text{($\alpha \neq 0$)}
            \end{cases}
            $
            &\quad
            $\alpha'=\alpha$
            \\ \hline
   P$_{3}$  &
            $
            \begin{cases}
            xy - yx + y^{2} - zx,\\
            xz + yz - zx,\\
            zy - yz - z^{2}
            \end{cases}
            $
            &\ \hfill --------------------- \hfill \rule{0pt}{10pt}
            \\ 
            \hline
            \end{tabular}}

\noindent{\renewcommand\arraystretch{1.5} 
\begin{tabular}{|p{0.7cm}|p{4.3cm}|p{6.3cm}|} \hline
S$_{1}$ &
$
\begin{cases}
yz-\alpha zy,\\
zx-\beta xz,\\
xy-\gamma yx
\end{cases}\hspace{-10pt}
$
($\alpha\beta\gamma \neq 0,1$)
&\quad 
\begin{minipage}{6cm}
\rule{0pt}{12pt}$
(\alpha',\beta',\gamma')
$\\
$=
\begin{cases}
(\alpha,\beta,\gamma),(\beta,\gamma,\alpha),\\
(\gamma,\alpha,\beta),(\alpha^{-1},\gamma^{-1},\beta^{-1}),\\
(\beta^{-1},\alpha^{-1},\gamma^{-1}),(\gamma ^{-1} , \beta ^{-1} ,\alpha ^{-1} )
\end{cases}$
\end{minipage}
\\ \hline
S$_{2}$ &
$
\begin{cases}
zx-\alpha yz,\\
xz-\beta zy,\\
x^{2}+\alpha \beta y^{2}
\end{cases}$\hspace{-10pt}
($\alpha\beta \neq 0$)
&\quad
$(\alpha':\beta')=(\alpha:\beta)\text{ in }\mathbb{P}^1$
\\ \hline
S$_{3}$ &
$
\begin{cases}
yx-\alpha z^{2},\\
zy - \beta x^{2},\\
xz-\gamma y^{2}
\end{cases}
$\hspace{-10pt}
($\alpha\beta\gamma\neq 0,1$)
&\quad
$\alpha'\beta'\gamma'=\alpha\beta\gamma$
\\ \hline\hline
S$'_{1}$  &
$
\begin{cases}
xy -\beta yx,\\
x^{2}+yz-\alpha zy,\\
zx-\beta xz \ \ \text{($\alpha\beta^{2} \neq 0,1$)}
\end{cases}
$
&\quad
$(\alpha',\beta')=(\alpha,\beta),(\alpha^{-1},\beta^{-1})$
\\ \hline
S$'_{2}$&
$
\begin{cases}
xy -zx,\\
yx - xz,\\
x^{2} + y^{2} +z^{2}
\end{cases}
$ 
&\ \hfill --------------------- \hfill \rule{0pt}{10pt}
\\ \hline
\end{tabular}}

\noindent{\renewcommand\arraystretch{1.5} 
\begin{tabular}{|p{0.7cm}|p{4.3cm}|p{6.3cm}|} \hline
	T$_{1}$ &
	\begin{minipage}{4cm}
		$
		\begin{cases}
		xy-yx,\\ 
		xz-zx-\beta x^{2} \\ \hspace{50pt} +(\beta+\gamma)yx, \\
		yz-zy-\alpha y^{2} \\ \hspace{50pt} +(\alpha+\gamma)xy\\
		\end{cases}
		$ 
		\\
		\hfill($\alpha+\beta+\gamma \neq 0$)
	\end{minipage}
	&\quad
\begin{minipage}{6cm}
$(\alpha':\beta':\gamma')$
\\
$=
	\begin{cases}
	(\alpha:\beta:\gamma),(\alpha:\gamma:\beta),\\
	(\beta:\alpha:\gamma),
	(\beta:\gamma:\alpha),\\
	(\gamma:\alpha:\beta),(\gamma:\beta:\alpha)
	\end{cases}
        \text{ in } \mathbb{P}^{2}
	$
\end{minipage}
	\\ \hline
	T$_{2}$  &
	\begin{minipage}{4cm}
		$
		\begin{cases}
		x^{2} - y^{2},\\
		xz-zy-\beta xy \\ \hspace{50pt} +(\beta+\gamma)y^{2},\\
		yz-zx-\alpha yx \\ \hspace{50pt} +(\alpha+\gamma)x^{2}
		\end{cases}
		$\\
		\hfill ($\alpha + \beta + \gamma \neq 0$)
	\end{minipage}
	&\quad
	$
	(\alpha'+\beta':\gamma')=(\alpha+\beta:\gamma)
	\text{ in }\mathbb{P}^1
	$
	\\ \hline
	T$_{3}$ &
	$
	\begin{cases}
	x^{2}-xy+y^{2},\\
	xz+zy,\\
	yx-yz+zx-zy
	\end{cases}
	$ 
	&\ \hfill --------------------- \hfill \rule{0pt}{10pt}
	\\ \hline
	\end{tabular}}

\noindent{\renewcommand\arraystretch{1.5} 
\begin{tabular}{|p{0.7cm}|p{4.3cm}|p{6.3cm}|} \hline
	T$'$   &
	\begin{minipage}{4cm}
	$
	\begin{cases}
	\alpha x^{2}+\beta(\alpha+\beta)xy-xz\\
	\hfill +zx-(\alpha+\beta)zy,\\
	xy-yx-\beta y^{2},\\ 
	2\beta xy-\beta^{2}y^{2}+yz-zy
	\end{cases}
	$ \\ \hfill ($\alpha+2\beta \neq 0$)
	\end{minipage}
	&\quad
	$(\alpha':\beta')=(\alpha:\beta)\text{ in }\mathbb{P}^{1}$
	\\ \hline
	\end{tabular}}

\noindent{\renewcommand\arraystretch{1.5} 
\begin{tabular}{|p{0.7cm}|p{4.3cm}|p{6.3cm}|} \hline
	CC &
	$
	\begin{cases}
	-3x^{2}-2xy+xz-zx \\ \hfill +2zy, \\
	-xy+yx+y^{2}, \\
	3x^{2}+y^{2}+yz-zy
	\end{cases}
	$
	&
	\ \hfill --------------------- \hfill \rule{0pt}{10pt}
	\\ \hline
	\end{tabular}}

\noindent{\renewcommand\arraystretch{1.5} 
\begin{tabular}{|p{0.7cm}|p{4.3cm}|p{6.3cm}|} \hline
    NC$_{1}$  &
              \begin{minipage}{4cm}
              $
              \begin{cases}
              xy-\alpha yx,\\
              \dfrac{\alpha^{3}-1}{\alpha}x^{2}+\alpha zy-yz, \\
              \dfrac{\alpha^{3}-1}{\alpha}y^{2}+\alpha xz-zx 
              \end{cases}
              $\\
              \hfill($\alpha(\alpha^{3}-1)\neq 0$)
              \end{minipage}
              &\quad
              $\alpha'=\alpha^{\pm 1}$ \hspace{4.5cm}
              \\ \hline
    NC$_{2}$  &
              $
              \begin{cases}
              xz-2yx+zy, \\
              zx-2xy+yz, \\
              y^{2}+x^{2}
              \end{cases}
              $
              &
              \ \hfill --------------------- \hfill \rule{0pt}{10pt}
              \\ \hline
              \end{tabular}}

\noindent{\renewcommand\arraystretch{1.5} 
\begin{tabular}{|p{0.7cm}|p{4.3cm}|p{6.3cm}|} \hline
     WL$_{1}$  &
\begin{minipage}{4cm}
$
               \begin{cases}
               \alpha xy-yx, \\
               \alpha xz-\gamma yx-zx, \\
               zy-yz+(1+\gamma)y^{2}
               \end{cases}
               $ 
              \\ \hfill ($\alpha \neq 0,1$)
\end{minipage}
               &\quad
               $(\alpha',\gamma')=(\alpha,\gamma)$
               \\ \hline
     WL$_{2}$  &
               $
               \begin{cases}
                xy-yx, \\
                xz-\gamma yx-zx, \\
                zy-yz+(1+\gamma)y^{2}
               \end{cases}
               $
               &\quad 
               $\gamma'=\gamma$
               \\ \hline
     WL$_{3}$  &
               $
               \begin{cases}
                xy-yx, \\
                xz-x^{2}-\gamma yx-zx, \\
                xy+zy-yz \\ \hspace{50pt} +(1+\gamma)y^{2}
               \end{cases}
               $
               &\quad 
               $\gamma'=\gamma$
               \\ \hline
\end{tabular}}

\noindent{\renewcommand\arraystretch{1.5} 
\begin{tabular}{|p{0.7cm}|p{4.3cm}|p{6.3cm}|} \hline
TL$_{1}$  &
\begin{minipage}{4cm}
$
\begin{cases}
xy-\alpha yx, \\
xz-\alpha^{-1}zx, \\
\alpha^{-1}zy-\alpha yz+x^{2}
\end{cases}
$
\\
\hfill ($\alpha\neq 0$)
\end{minipage}
&\quad 
$\alpha'=\alpha^{\pm 1}
$
\\ \hline
TL$_{2}$  &
$
\begin{cases}
xy-yx-\beta x^{2}, \\
xz-zx-yx, \\
zy- yz-\beta xz +x^{2}+y^{2}
\end{cases}
$
&\quad
$\beta'=\pm \beta$
\\ \hline
TL$_{3}$  &
$
\begin{cases}
xy+yx,\\
xz+zx-yx,\\
zy-yz-x^{2}-y^{2}
\end{cases}
$
&
\ \hfill --------------------- \hfill \rule{0pt}{10pt}
\\ 
\hline 
TL$_{4}$  &
$
\begin{cases}
xy+yx,\\
xz-zx-x^{2},\\
zy-yz+xy+x^{2}
\end{cases}
$
&\ \hfill --------------------- \hfill \rule{0pt}{10pt}
\\ 
\hline 
	\end{tabular}}
\vspace*{10pt}

The following theorem lists all possible defining relations of algebras 
in each type up to graded Morita equivalence except for Type EC.
\begin{thm} [\cite{E,K,KMM,Ma,O}]
\label{thm_main2}
Let $A=\A$ be a $3$-dimensional quadratic AS-regular algebra. 
For each type except for Type EC, the following table describes 
\begin{description}
\item[{\rm (I)}] the defining relations of $A$, and
\item[{\rm (III)}] the conditions to be graded Morita equivalent 
      in terms of their defining 
      relations. 
      {\rm (}see Example \ref{exm_Mo1}.  {\rm )}
\end{description}
In the following table,
if X $\neq$ Y, then Type X algebra 
is not graded Morita equivalent to any Type Y algebra. 
\end{thm}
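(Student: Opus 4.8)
The plan is to go through the types one by one, following the four-step pattern of Example~\ref{exm_Mo1}. The defining relations in column~(I) and the isomorphism conditions are exactly those established in Theorem~\ref{thm_main} (its Steps 0--3), so the only new content is Step~4: for each type, decide when two members of the listed family are graded Morita equivalent. By Theorems~\ref{thmATV1} and~\ref{gradedequi} this is a purely geometric question about the pairs $(E,\sigma)$ and $(E',\sigma')$.

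The first move is to simplify the criterion of Theorem~\ref{gradedequi}. Commutativity of the $i$-th square forces $\phi_{i+1}=\sigma'\phi_i\sigma^{-1}$ (which is meaningful because $\sigma\in{\rm Aut}_k E$ and $\sigma'\in{\rm Aut}_k E'$), so the whole sequence is recovered from $\phi_0$ by $\phi_i=(\sigma')^{i}\phi_0\sigma^{-i}$, and conversely any $\phi_0$ yields a sequence all of whose squares commute. Hence ${\rm GrMod}\,\mathcal{A}(E,\sigma)\cong{\rm GrMod}\,\mathcal{A}(E',\sigma')$ if and only if there is a projective equivalence $\phi_0\colon E\to E'$ such that $(\sigma')^{i}\phi_0\sigma^{-i}$ extends to an automorphism of $\mathbb{P}^2$ for every $i\in\mathbb{Z}$. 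The delicate point is that $\sigma$ itself need not extend to $\mathbb{P}^2$: for Type~P, where $E=\mathbb{P}^2$ and $\sigma,\sigma'\in{\rm PGL}_3(k)$, every such conjugate automatically extends and the condition is weak, whereas for every type whose point scheme is a proper cubic divisor $\sigma$ generically does \emph{not} extend --- for Type~$\mathrm{S}_1$, say, an extension would force $\alpha\beta\gamma=1$ --- and the condition becomes genuinely restrictive. Taking $i=0$ already gives a projective equivalence $E\to E'$, so $E$ and $E'$ are projectively equivalent; this separates all types with non-projectively-equivalent point schemes.

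For each fixed type one then (a) fixes the normal form $E\subset\mathbb{P}^2$ and the admissible parametrised family of $\sigma$ from Theorem~\ref{thm_main}; (b) computes ${\rm Aut}_k E$ and, inside it, the subgroup $N_E$ of projective equivalences --- the group of monomial matrices for the triangle, and the analogous stabilisers for the line-and-conic configurations, the three concurrent lines, the cuspidal and nodal cubics, and (with the scheme-theoretic conventions of \cite[Definition 4.3]{Mo1}) the double line plus line and the triple line; and (c) solves the explicit membership condition ``$(\sigma')^{i}\phi_0\sigma^{-i}\in N_E$ for all $i$''. In every case this collapses to a mild numerical condition on the parameters --- for Type~$\mathrm{S}_1$ it reads $\alpha'\beta'\gamma'=(\alpha\beta\gamma)^{\pm1}$, recovering the last line of Example~\ref{exm_Mo1} --- so that each graded Morita class is a union of graded isomorphism classes from Theorem~\ref{thm_main}, obtained by symmetrising the column-(II) conditions. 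One can also exhibit the equivalences directly as twists: by Proposition~\ref{ge}(1), $\mathcal{A}(E,\sigma)^{\phi}=\mathcal{A}(E,\sigma\overline{\phi^{\ast}})$ whenever $\overline{\phi^{\ast}}$ restricts to $E$, and when in addition $(\phi\otimes\phi)(R)=R$ the map $\phi$ extends to a graded automorphism, so Theorem~\ref{twist} gives ${\rm GrMod}\,\mathcal{A}(E,\sigma)\cong{\rm GrMod}\,\mathcal{A}(E,\sigma\overline{\phi^{\ast}})$; letting $\phi$ run over the appropriate (usually diagonal) subgroup produces every intra-type Morita equivalence, and the reduction above shows there are no others. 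Finally one must rule out Morita equivalences between distinct types sharing a point scheme --- the triangle types $\mathrm{S}_1,\mathrm{S}_2,\mathrm{S}_3$, the types $\mathrm{S}'_1$ and $\mathrm{S}'_2$, and the concurrent-line types $\mathrm{T}_1,\mathrm{T}_2,\mathrm{T}_3$; here even the permutation of the components of $E$ induced by $\sigma$ is not constrained by the bare commuting-square condition, so one traces the full action of $\sigma$ on $E$ through the forced sequence $\phi_i=(\sigma')^{i}\phi_0\sigma^{-i}$ and checks incompatibility across those combinatorial types.

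I expect the main obstacle to be the volume and delicacy of the case analysis for the non-reduced point schemes, Types~$\mathrm{WL}$ (double line plus line) and~$\mathrm{TL}$ (triple line): there one works throughout with the extended notion of a geometric algebra from \cite[Definition 4.3]{Mo1}, and the computation of ${\rm Aut}_k E$, of $N_E$, and of which conjugates $(\sigma')^{i}\phi_0\sigma^{-i}$ extend to $\mathbb{P}^2$ is considerably more intricate than in the reduced cases; a secondary difficulty is the cross-type separation just described. Neither, however, needs any tool beyond Theorems~\ref{twist}, \ref{gradediso}, \ref{gradedequi} and Proposition~\ref{ge}; it is a matter of organised computation.
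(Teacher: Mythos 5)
For the types whose point scheme is reduced, your plan is essentially the paper's: the relations and isomorphism classes are imported from Theorem \ref{thm_main}, and Step 4 is carried out by noting that a sequence as in Theorem \ref{gradedequi} is determined by $\phi_0$ via $\phi_i=(\sigma')^{i}\phi_0\sigma^{-i}$, so graded Morita equivalence amounts to every such conjugate extending to $\mathbb{P}^2$; this is exactly Step 4 of Example \ref{exm_Mo1} and the case-by-case computations the paper delegates to \cite{Ma}, \cite{KMM}, \cite{O} and \cite{E}.

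The gap is in your treatment of Types WL and TL. You propose to run the same geometric argument there, computing ${\rm Aut}_{k}\,E$ and its subgroup of projective equivalences for a double line plus a line and for a triple line. But these $E$ are non-reduced cubic divisors: the relevant notion of geometric algebra is the extended one of \cite[Definition 4.3]{Mo1}, the scheme-theoretic automorphism group of $\mathcal{V}(x^{2}y)$ or $\mathcal{V}(x^{3})$ is not the naive stabilizer in ${\rm PGL}_{3}(k)$ (it contains automorphisms acting on the nilpotent structure), and you have not justified that Theorems \ref{gradediso} and \ref{gradedequi} apply in the form you use them. The paper deliberately avoids all of this: by Lemma \ref{twistWLTL} (that is, \cite[Theorem 8.16 (iii)]{ATV2}), every Type WL algebra is a twist of $B_{1}$ or $B_{2}$ and every Type TL algebra is a twist of $B_{3}$ by a graded automorphism, so Zhang's Theorem \ref{twist} already yields the graded Morita equivalences, and the whole classification reduces to computing ${\rm GrAut}_{k}B_{i}$ and the resulting twisted relations (Lemma \ref{lem1} for isomorphism, Theorem \ref{twist} for Morita equivalence). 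Without this key lemma, or a worked-out theory of automorphisms and point schemes for non-reduced plane cubics, your argument for WL and TL does not go through as written. A smaller point: in Theorem \ref{thm_main2} the types are the coarse ones, so the cross-type assertion only involves non-projectively-equivalent point schemes and follows from the $i=0$ square alone; the substantive task in your final step is the opposite of what you state, namely proving that, e.g., Type S$_{2}$ and S$_{3}$ algebras \emph{are} graded Morita equivalent to algebras in the single listed Type S family --- the table merges those subtypes rather than separating them.
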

\noindent{\renewcommand\arraystretch{1.5} 
\begin{tabular}{|p{0.7cm}|p{4.3cm}|p{6.3cm}|} \hline
 {\footnotesize Type}  &\quad (I) defining relations
       &\quad (II) condition to be  \\ 
 &\quad ($\alpha,\beta,\gamma \in k$) 
       &\quad graded algebra isomorphic \\ 
            \hline
\end{tabular}}

\noindent{\renewcommand\arraystretch{1.5} 
\begin{tabular}{|p{0.7cm}|p{4.3cm}|p{6.3cm}|} \hline
       P   &
            $
            \begin{cases}
            xy-yx ,\\
            yz-zy,\\
            zx-xz
            \end{cases}
            $
            &
\ \hfill --------------------- \hfill \rule{0pt}{10pt}
            \\ \hline
S &
$
\begin{cases}
yz-\alpha zy,\\
zx-\beta xz, \ \ \text{($\alpha\beta\gamma \neq 0,1$)} \\
xy-\gamma yx
\end{cases}
$
&\quad 
$\alpha'\beta'\gamma'=(\alpha\beta\gamma)^{\pm 1}$
\\ \hline
S$'$  &
$
\begin{cases}
xy -\beta yx,\\
x^{2}+yz-\alpha zy,\\
zx-\beta xz \ \ \ \text{($\alpha\beta^{2} \neq 0,1$)}
\end{cases}
$
&\quad
$\alpha'{\beta'}^{2}=(\alpha\beta^{2})^{\pm 1}$
\\ \hline
T &
\begin{minipage}{150pt}
	$
	\begin{cases}
	xy-yx,\\ 
	xz-zx-x^{2}, \\
	yz-zy-y^{2}\\
	\end{cases}
	$ 
\end{minipage}
&
\ \hfill --------------------- \hfill \rule{0pt}{10pt}
\\ \hline
    T$'$   &
               $
               \begin{cases}
               x^{2}-xz+zx-zy,\\
               xy-yx,\\ 
               yz-zy
               \end{cases}
               $ 
               &
\ \hfill --------------------- \hfill \rule{0pt}{10pt}
               \\ \hline

     CC &
             $
             \begin{cases}
              -3x^{2}-2xy+xz-zx \\ \hfill +2zy, \\
              -xy+yx+y^{2}, \\
              3x^{2}+y^{2}+yz-zy
              \end{cases}
             $
             &
\ \hfill --------------------- \hfill \rule{0pt}{10pt}
             \\ \hline
	NC  &
	\begin{minipage}{4cm}
		$
		\begin{cases}
		xy-\alpha yx,\\
		\dfrac{\alpha^{3}-1}{\alpha}x^{2}+\alpha zy-yz, \\
		\dfrac{\alpha^{3}-1}{\alpha}y^{2}+\alpha xz-zx 
		\end{cases}
		$\\
		\hfill ($\alpha(\alpha^{3}-1)\neq 0$)
	\end{minipage}
	&\quad 
	${\alpha'}^{3}=\alpha^{\pm 3}$
	\\ \hline
	WL  &
	$ 
	\begin{cases}
	xy+yx, \\
	xz+zx, \\
	zy-yz+y^{2}
	\end{cases}
	$ 
	&
\ \hfill --------------------- \hfill \rule{0pt}{10pt}
	\\ \hline
TL  &
$
\begin{cases}
xy-yx, \\
xz-zx, \\
zy-yz+x^{2}
\end{cases}
$
&
\ \hfill --------------------- \hfill \rule{0pt}{10pt}
\\ \hline
\end{tabular}}
\begin{rem}\label{Z-algs}
	Since ${\rm GrMod}\,A \cong {\rm GrMod}\,A'$ if and only if
	$\overline{A} \cong \overline{A'}$ as $\mathbb{Z}$-algebras
	where $\overline{A}:=\bigoplus_{i,j \in \mathbb{Z}} A_{j-i}$
	by \cite{Sie},
	the above table agrees with \cite[Table 1]{BP}. 
\end{rem}
If $E$ is reduced, then 
Theorem \ref{thm_main} and Theorem \ref{thm_main2} 
are proved by the following five steps (see Example \ref{exm_Mo1}): 
\begin{description}
\item[{\rm Step 0}] Fix a defining relation of $E$.
\item[{\rm Step 1}] Find all automorphisms $\sigma$ of $E$. 
\item[{\rm Step 2}] Find the defining relations of 
      $\mathcal{A}(E,\sigma)$ for each $\sigma\in {\rm Aut}_{k}E$
      by using (G2) condition in Definition \ref{geometric}. 
\item[{\rm Step 3}] Classify them up to isomorphism of graded algebras 
      in terms of their defining relations 
     by using Theorem \ref{gradediso}. 
\item[{\rm Step 4}] Classify them up to graded Morita equivalence 
      in terms of their defining relations 
      by using Theorem \ref{gradedequi}. 
\end{description}

For Type P$_{i}$ ($i=1,2,3$), Type S$_{i}$ ($i=1,2,3$), 
Type S$'_{i}$ ($i=1,2$), Type T$_{i}$ ($i=1,2,3$) and Type T$'$, 
the above five steps were completed in \cite{Ma} and \cite{KMM}. 
For Type CC and Type NC$_{i}$ ($i=1,2$), 
Step 1 was completed in \cite{O}, and 
Step 2, Step 3 and Step 4 were completed in \cite{E}. 

We briefly explain the method in \cite{O}. 
Let $E$ be an irreducible variety and $\pi\colon\Tilde{E}\to E$ 
a normalization of $E$. 
Then, for any $\sigma\in{\rm Aut}_{k}E$, 
there exists a unique $\Tilde{\sigma}\in{\rm Aut}_{k}\,\Tilde{E}$ 
such that $\sigma\circ\pi=\pi\circ \Tilde{\sigma}$,
i.e., the following diagram commutes{\rm :}
$$
\begin{CD}
\Tilde{E} @>{\pi}>> E\\
@V{\Tilde{\sigma}}VV @VV{\sigma}V \\
\Tilde{E}@>{\pi} >> E
\end{CD}
$$
In fact, the assignment $\sigma \longmapsto \Tilde{\sigma}$ is 
an injective group homomorphism 
from ${\rm Aut}_{k}E$ to ${\rm Aut}_{k}\Tilde{E}$. 

For example, let $A=\mathcal{A}(E,\sigma)$ be a Type NC algebra.\\
Step 0: Since $E$ is a nodal cubic curve,
we may assume that $E=\mathcal{V}(x^{3}+y^{3}+xyz)$.\\
Step 1: A normalization $\pi\colon\mathbb{P}^{1}=\Tilde{E}\longrightarrow E$ is given by
	$$
	\pi(a\colon b)=(a^{2}b\colon ab^{2}\colon -a^{3}-b^{3}). 
	$$
	Since $\sigma$ fixes the singular point $(0:0:1) \in E$ 
        and $\pi^{-1}((0:0:1))=\{ (1:0),(0:1) \}
	\subset \mathbb{P}^{1}$, 
        either $\tilde{\sigma}$ fixes both $(1:0)$ and $(0:1)$
	so that
	$
	\tilde{\sigma}
	=\left(
	\begin{array}{cc}
	1 & 0 \\
	0 & \alpha
	\end{array}
	\right)
	$ 
	for $0 \neq \al \in k$,
	or $\tilde{\sigma}$ switches $(1:0)$ and $(0:1)$ so that
	$
	\tilde{\sigma}
	=\left(
	\begin{array}{cc}
	0 & 1 \\
	\beta & 0
	\end{array}
	\right)
	$
	for $0 \neq \beta \in k$.
	In each case, the corresponding $\sigma$ is given as
	$$
	\sigma_{1}(x\colon y\colon z)
	=(\alpha xy \colon \alpha^{2}y^{2} \colon (\alpha^{3}-1)x^{2}+\alpha^{3}yz) 
        \quad (\alpha^{3}\ne 0,1)
	$$
	or
	$$
	\sigma_{2}(x\colon y\colon z)
	=(\beta y^{2} \colon \beta^{2}xy \colon (1-\beta^{3})x^{2}+yz) 
        \quad (\beta^{3}\ne 0,1). 
	$$
\begin{rem}
\label{rem_typeNC} 
We call the above $\mathcal{A}(E,\sigma_{i})$ 
{\it Type NC$_{i}$ algebras} ($i=1,2$). 
Type NC$_{1}$ algebras are isomorphic to algebras 
given in \cite[Theorem 2.2]{NVZ}, 
however, Type NC$_{2}$ algebras are not isomorphic to any algebra 
in \cite[Theorem 2.2]{NVZ}. 
In fact, the above $\sigma_{1}$ was in \cite{NVZ}, 
but $\sigma_{2}$ was overlooked in \cite{NVZ}. 
\end{rem}
To prove Theorem \ref{thm_main} and Theorem \ref{thm_main2} 
when $E$ is a non-reduced cubic in $\mathbb{P}^{2}$,  
we use the following key lemma. 
\begin{lem}[{\cite[Theorem 8.16 (iii)]{ATV2}}]
\label{twistWLTL}
\begin{enumerate}[{\rm (1)}]
\item 
If $A$ is a $3$-dimensional quadratic AS-regular algebra of Type $WL$, 
then there exists $\varphi \in {\rm GrAut}_{k}A$ such that 
\begin{align*} 
&A^{\varphi} \cong B_{1}:=k\langle x,y,z\rangle/(xy-yx,xz-zx,zy-yz+xz),
\text{ or }\\
&A^{\varphi} \cong B_{2}:=k\langle x,y,z\rangle/(xy-yx,xz-zx,zy-yz+y^{2}). 
\end{align*}
\item 
If $A$ is a $3$-dimensional quadratic AS-regular algebra of Type TL, 
then there exists $\varphi \in {\rm GrAut}_{k}A$
such that 
$$
A^{\varphi} \cong B_{3}:=k\langle x,y,z\rangle/(xy-yx,xz-zx,zy-yz+x^{2}). 
$$
\end{enumerate}
\end{lem}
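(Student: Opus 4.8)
The plan is to prove both parts of Lemma~\ref{twistWLTL} by combining the structure theorem for Type WL and Type TL algebras (their point schemes are a double line plus a line, resp.\ a triple line) with the twisting machinery of Proposition~\ref{ge} and Theorem~\ref{twist}. The key geometric fact, which I would extract from \cite{ATV2} or reprove directly, is that for a non-reduced cubic $E \subset \mathbb{P}^2$ the automorphism group of $E$ inside $\Aut_k\mathbb{P}^2$ is large enough that, up to composing $\sigma$ with some $\overline{\phi^{\ast}}$ coming from $\phi \in \GL(V)$ stabilizing $E$, one may put $\sigma$ into a normal form. By Proposition~\ref{ge}(3), replacing $\sigma$ by $\sigma\overline{\phi^{\ast}}$ corresponds exactly to replacing $A = \mathcal{A}(E,\sigma)$ by the twist $A^{\phi}$, and by Theorem~\ref{twist} (after checking $\phi$ comes from a genuine graded automorphism, i.e.\ $(\phi\otimes\phi)(R)=R$, which holds precisely when $\overline{\phi^{\ast}}$ also fixes the first coordinate, cf.\ Remark~\ref{ta}) one keeps control of the graded module category. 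So the proof reduces to: list the Type WL (resp.\ TL) algebras via Steps 0--2 applied to a fixed equation of $E$; identify within each isomorphism class a representative $\sigma$ that can be normalized to one of the two (resp.\ one) distinguished automorphisms $\sigma_{B_i}$ whose corresponding algebra is $B_i$; and verify that the normalizing element is realized by an honest $\varphi \in \GrAut_k A$.

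Concretely, for Type TL I would fix $E = \mathcal{V}(z^3)$ (a triple line), compute $\Aut_k E$ and the subgroup induced from $\GL(V)$, run the (G2) computation to get the general Type TL relations (which, from Theorem~\ref{thm_main}, come in families TL$_1$--TL$_4$ with parameters $\alpha$ or $\beta$), and then show each such $\sigma$ is conjugate, via an element $\phi$ inducing a graded automorphism of the relevant algebra, to the automorphism $\sigma_{B_3}$ giving $B_3 = \kang/(xy-yx,\,xz-zx,\,zy-yz+x^2)$. The point is that the one- or zero-parameter families of Theorem~\ref{thm_main} collapse to a single graded-Morita class, matching the single Type TL entry in Theorem~\ref{thm_main2}; the twist $\varphi$ is what absorbs the parameter. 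For Type WL, fix $E = \mathcal{V}(x^2 y)$ (double line $\mathcal{V}(x)$ plus line $\mathcal{V}(y)$), do the same, and find that there are now \emph{two} graded-Morita classes, represented by $B_1$ and $B_2$ — the dichotomy corresponds to whether $\sigma$, restricted appropriately, does or does not act trivially in a way detectable after twisting (geometrically, the two classes are distinguished by an invariant of $(E,\sigma)$ that twisting cannot change, consistent with the single WL row of Theorem~\ref{thm_main2} being, on closer inspection of \cite{ATV2}, refined into $B_1,B_2$).

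The main obstacle I anticipate is Step~1 combined with the bookkeeping of which $\phi \in \GL(V)$ actually yield $\varphi \in \GrAut_k A$ rather than merely a graded Morita equivalence. For reduced $E$ one has the clean five-step method, but for non-reduced $E$ the scheme structure matters: $\Aut_k E$ must be computed at the level of the non-reduced scheme (using \cite[Definition 4.3]{Mo1}), and not every projective equivalence of the underlying reduced curve lifts. So the delicate point is to exhibit, for each algebra in the WL/TL families, an explicit $\phi$ with $(\phi\otimes\phi)(R)=R$ that conjugates its $\sigma$ to $\sigma_{B_i}$ — i.e.\ to check that the needed twist is by an inner-to-the-category automorphism. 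I expect this to be a finite but somewhat intricate matrix computation, case by case over the parameter $\alpha$ or $\beta$; alternatively, one can shortcut it by invoking \cite[Theorem 8.16(iii)]{ATV2} directly, in which case the role of the proof in this paper is just to reconcile that statement with the explicit relations in Theorems~\ref{thm_main}--\ref{thm_main2}, verifying that $B_1,B_2,B_3$ indeed occur as twists of the listed algebras and that no further collapsing occurs.
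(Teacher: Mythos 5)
The first thing to say is that the paper does not prove this lemma at all: it is quoted verbatim from \cite[Theorem 8.16 (iii)]{ATV2}, and the whole design of Section \ref{sec_main} for Types WL and TL is to \emph{start} from $B_1,B_2,B_3$ and work backwards (computing ${\rm GrAut}_{k}B_i$ and the twists $B_i^{\varphi^{-1}}$), precisely so as to avoid the direct scheme-theoretic computation on a non-reduced cubic that your plan requires. So your proposal is not a reconstruction of the paper's argument but an outline of a from-scratch proof of the ATV2 result, and as such it has several concrete gaps. First, your criterion for when $\phi\in{\rm GL}(V)$ yields an honest $\varphi\in{\rm GrAut}_{k}A$ --- that ``$\overline{\phi^{\ast}}$ fixes the first coordinate'' --- is not correct; by Theorem \ref{gradediso} applied with $A=A'$, the condition $(\phi\otimes\phi)(R)=R$ is equivalent to $\overline{\phi^{\ast}}$ restricting to an automorphism of $E$ that \emph{commutes} with $\sigma$. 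Hence the normalizing element must be chosen in the centralizer of $\sigma$ inside ${\rm Aut}_{k}(\mathbb{P}^{2},E)$, and showing that composing $\sigma$ with elements of its own centralizer suffices to reach $\sigma_{B_i}$ is exactly the hard content of the lemma; you flag this as ``the delicate point'' but do not supply the argument. Second, you propose to read off the families WL$_{1}$--WL$_{3}$ and TL$_{1}$--TL$_{4}$ from Theorem \ref{thm_main}, but in this paper those families are \emph{derived from} Lemma \ref{twistWLTL} via the four-step method, so that appeal is circular.

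Two further points. Your assertion that $B_1$ and $B_2$ represent two distinct graded-Morita classes of Type WL algebras is inconsistent with Theorem \ref{thm_main2}, which has a single WL row with no parameter: the dichotomy in part (1) of the lemma is a dichotomy of isomorphism types of normal forms, not of graded-Morita classes. And for non-reduced $E$ the conditions (G1), (G2) and the group ${\rm Aut}_{k}E$ must be interpreted at the level of the non-reduced scheme (\cite[Definition 4.3]{Mo1}); you correctly identify this as the main obstacle, but resolving it is the bulk of the work, and without it the proposal is a program rather than a proof. If the intent is simply to match the paper, the honest route is the one the paper takes: cite \cite[Theorem 8.16 (iii)]{ATV2} and verify only that $B_1,B_2,B_3$ are the algebras appearing there.
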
 
Since $B=A^{\varphi}$ if and only if 
$A=B^{\varphi^{-1}}$ by \cite[Proposition 2.5 (2)]{Z}, 
for Type WL algebras and Type TL algebras, 
Theorem \ref{thm_main} and Theorem \ref{thm_main2} are proved 
by the following four steps:  
\begin{description}
\item[{\rm Step 1}] Find all graded algebra automorphisms $\varphi^{-1}$ of $B_{i}$ ($i=1,2,3$) 
                    in Lemma \ref{twistWLTL}. 
\item[{\rm Step 2}] Find the defining relations of $B_{i}^{\varphi^{-1}}$
                    by using Definition \ref{twa}.
\item[{\rm Step 3}] Classify them up to isomorphism of graded algebras 
                    in terms of their defining relations 
                    by using Lemma \ref{lem1}. 
\item[{\rm Step 4}] Classify them up to graded Morita equivalence 
                    in terms of their defining relations 
                    by using Theorem \ref{twist}. 
\end{description}
Step 1 and Step 2 were completed in \cite{K} and, 
Step 3 and Step 4 were completed in \cite{E}. 
\section{Defining relations for Type EC algebras}
\label{sec_EC}
Throughout this section, 
let $E$ be an elliptic curve in $\mathbb{P}^{2}$. 
Our aim in this section is to find ${\rm Aut}_{k}\,E$ and 
to compute the defining relations of $\mathcal{A}(E,\sigma)$ 
where $\sigma \in {\rm Aut}_{k}\,E$. 

It is well-known that the $j$-invariant $j(E)$ classifies elliptic curves 
up to projective equivalence. 
\begin{thm}[{\cite[Theorem IV 4.1 (b)]{H}}]\label{pe}
Let $E$ and $E'$ be two elliptic curves in $\mathbb{P}^{2}$. 
Then $E$ and $E'$ are projectively equivalent if and only if $j(E)=j(E')$. 
\end{thm}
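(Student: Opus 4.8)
The plan is to prove the two implications separately. The forward implication is essentially formal. Suppose $E$ and $E'$ are projectively equivalent, so there is an isomorphism $\phi\colon E\to E'$ extending to an element of $\mathrm{PGL}_3(k)$. In particular $E$ and $E'$ are isomorphic as abstract smooth projective curves of genus $1$; after fixing base points and composing with a suitable translation, $\phi$ becomes an isomorphism of elliptic curves. Since the $j$-invariant depends only on the isomorphism class of an elliptic curve (equivalently, only on the function field $k(E)$), we conclude $j(E)=j(E')$. Nothing about the plane embedding enters here.

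For the converse, assume $j(E)=j(E')$. By the abstract classification of elliptic curves over the algebraically closed field $k$, there is an isomorphism of abstract curves $\phi\colon E\to E'$, and the task is to upgrade it to a projective equivalence. Each of $E,E'$ is a smooth plane cubic, hence is embedded in $\mathbb{P}^2$ by the complete linear system of $\mathcal{O}_E(1)=\mathcal{O}_{\mathbb{P}^2}(1)|_E$ (resp.\ $\mathcal{O}_{E'}(1)$): since $E$ lies on no line, restriction gives an isomorphism $H^0(\mathbb{P}^2,\mathcal{O}_{\mathbb{P}^2}(1))\xrightarrow{\ \sim\ }H^0(E,\mathcal{O}_E(1))$ of $3$-dimensional spaces, and by Riemann--Roch on a genus-$1$ curve a degree-$3$ invertible sheaf is nonspecial with $h^0=3$, so $\mathbb{P}^2\cong\mathbb{P}\bigl(H^0(E,\mathcal{O}_E(1))^{\ast}\bigr)$ identifies $E\subset\mathbb{P}^2$ with the map given by $|\mathcal{O}_E(1)|$, and likewise for $E'$.

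Now $\phi^{\ast}\mathcal{O}_{E'}(1)$ and $\mathcal{O}_E(1)$ are two invertible sheaves of degree $3$ on $E$, and on an elliptic curve translations act transitively on $\mathrm{Pic}^{d}(E)$ for each $d$ (since multiplication by $d$ is surjective on $E(k)$, as $k$ has characteristic $0$); hence there is $P\in E(k)$ with $t_P^{\ast}\bigl(\phi^{\ast}\mathcal{O}_{E'}(1)\bigr)\cong\mathcal{O}_E(1)$, where $t_P$ denotes translation by $P$. Replacing $\phi$ by $\phi\circ t_P$ — still an isomorphism $E\to E'$ — we may assume $\phi^{\ast}\mathcal{O}_{E'}(1)\cong\mathcal{O}_E(1)$. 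Then $\phi$ induces a $k$-linear isomorphism $H^0(E',\mathcal{O}_{E'}(1))\xrightarrow{\ \sim\ }H^0(E,\mathcal{O}_E(1))$, well defined up to scalar, hence a linear automorphism of $\mathbb{P}^2$ compatible with the two embeddings; this automorphism restricts to $\phi$ on $E$ and is the required projective equivalence, so $E$ and $E'$ are projectively equivalent.

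The main obstacle is the normalization in the last paragraph: an abstract isomorphism $\phi$ need not send the hyperplane class of $E'$ to that of $E$, and correcting this by a translation is precisely what makes $\phi$ \emph{linear}. This step rests on the elementary but essential transitivity of the translation action on $\mathrm{Pic}^{d}(E)$, which in turn uses surjectivity of $[d]\colon E(k)\to E(k)$ — valid since $k$ is algebraically closed of characteristic $0$, as assumed throughout the paper. Keeping track of the sign in $t_P^{\ast}$ is routine bookkeeping and causes no difficulty.
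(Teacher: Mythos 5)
Your proof is correct. Note that the paper offers no proof of this statement at all: it simply cites Hartshorne, Theorem IV.4.1(b), which classifies elliptic curves up to \emph{abstract} isomorphism by the $j$-invariant. The statement as used in the paper is about \emph{projective equivalence} of the given plane embeddings, which is strictly more than abstract isomorphism, and your argument supplies exactly the missing bridge: a smooth plane cubic is embedded by the complete linear system of the degree-$3$ sheaf $\mathcal{O}_E(1)$ (the restriction $H^0(\mathbb{P}^2,\mathcal{O}(1))\to H^0(E,\mathcal{O}_E(1))$ being an isomorphism of $3$-dimensional spaces), and an abstract isomorphism $\phi$ can be corrected by a translation so that $\phi^{\ast}\mathcal{O}_{E'}(1)\cong\mathcal{O}_E(1)$, after which it is induced by a linear map. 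This is the standard argument and it is carried out correctly; the transitivity of translations on $\mathrm{Pic}^3(E)$ is the right key lemma. One cosmetic point: surjectivity of $[d]$ on $E(k)$ holds over any algebraically closed field (an isogeny is surjective), so the appeal to characteristic $0$ there is unnecessary, though harmless. Your write-up is in fact more complete than the paper's treatment, which leaves the passage from abstract isomorphism to projective equivalence implicit in the citation.
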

Let $X$ be a scheme and $Y$ a subscheme of $X$. 
We define 
$$
{\rm Aut}_{k}\,(X,Y)
:=\{\phi\in {\rm Aut}_{k}\,X\mid \phi|_{Y}\in {\rm Aut}_{k}\,Y \}. 
$$
We view an element of ${\rm Aut}_{k}\,(X,Y)$ in two ways, 
that is, 
as an automorphism of $X$ which restricts to 
an automorphism of $Y$ 
and as an automorphism of $Y$ which extends to 
an automorphism of $X$.
In particular, if $Y=\{p\}$, 
then we write ${\rm Aut}_{k}\,(X,Y)={\rm Aut}_{k}\,(X,p)$. 
\begin{thm}[{\cite[Corollary IV 4.7]{H}}]\label{Ha}
	Let $E$ be an elliptic curve in $\mathbb{P}^{2}$. 
        For every $p \in E$, 
	$$|{\rm Aut}_{k}(E,p)|=
	\begin{cases}
	2 &\quad\quad  \text{if } j(E)\neq 0, 12^{3},\\
	6 &\quad\quad  \text{if } j(E)=0,\\
	4 &\quad\quad  \text{if } j(E)= 12^{3}.
	\end{cases}$$
\end{thm}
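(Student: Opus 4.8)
The plan is to prove that $|{\rm Aut}_k(E,p)|$ depends only on $j(E)$, and takes the stated values, by reducing to a computation with a Weierstrass normal form. First I would use Theorem \ref{pe} to fix, up to projective equivalence, a convenient model of $E$: every elliptic curve in $\bbP$ is projectively equivalent to one in Weierstrass form $y^2z = x^3 + axz^2 + bz^3$ with distinguished inflection point $p = (0:1:0)$, and $j(E)$ is a specific rational function of $a$ and $b$. Since both sides of the claimed formula are invariant under projective equivalence (projective equivalence induces an isomorphism ${\rm Aut}_k(E,p)\to{\rm Aut}_k(E',p')$ carrying the given point to the given point), it suffices to prove the count for this model. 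The three cases $j\neq 0,12^3$, $j=0$, $j=12^3$ correspond respectively to $ab\neq 0$, $a=0$ (so $b\neq 0$), and $b=0$ (so $a\neq 0$).

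Next I would identify ${\rm Aut}_k(E,p)$ with the group of automorphisms of $E$ as an abelian variety with identity $p$ (an automorphism of the curve fixing $p$ is automatically a group automorphism, since translations are the only automorphisms moving $p$ and the automorphism group is the semidirect product of $E$ by ${\rm Aut}(E,p)$). So the task becomes: compute the automorphism group of the Weierstrass curve fixing the point at infinity. The key step is to write down explicitly which linear changes of the affine coordinates $(x,y)$ preserve a Weierstrass equation: these are exactly $x \mapsto u^2 x$, $y\mapsto u^3 y$ for $u\in k^\times$ with $u^4 a = a$ and $u^6 b = b$ — that is, the automorphism acts on $(a,b)$ by $(a,b)\mapsto(u^{-4}a, u^{-6}b)$ and must fix it. (One must first note that, since ${\rm char}\,k = 0$, one can normalize to short Weierstrass form and the only admissible coordinate changes fixing $p=(0:1:0)$ and the form of the equation are of this scaling type; this uses that the linear system $|3p|$ is canonical.) Then counting solutions $u$ is elementary: if $ab\neq 0$ we need $u^4=1$ and $u^6=1$, hence $u^2=1$, giving $2$ elements; if $a=0,b\neq 0$ we need only $u^6=1$, giving $6$; if $b=0,a\neq 0$ we need only $u^4=1$, giving $4$.

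The main obstacle I anticipate is the justification that every element of ${\rm Aut}_k(E,p)$ arises from such a coordinate scaling — i.e., that there are no "hidden" automorphisms not visible as linear maps in Weierstrass coordinates. I would handle this by the standard argument: an automorphism $\phi$ fixing $p$ pulls back the divisor $3p$ to itself (as $3p$ is the hyperplane section by an inflection line, characterized intrinsically), so $\phi$ acts on the $3$-dimensional space $H^0(E,\mathcal{O}_E(3p))$ and hence is induced by an element of ${\rm Aut}_k\bbP$ preserving $E$; examining how such a projective linear map can preserve the Weierstrass equation and fix $(0:1:0)$ forces it into the scaling family above. Alternatively, one can simply cite \cite[Corollary IV 4.7]{H} as the excerpt already does — the statement \emph{is} that corollary — in which case the "proof" is a direct appeal, and my contribution is merely to record the reduction to Weierstrass form and the scaling computation as a self-contained verification. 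I would present the Weierstrass-model computation as the proof, citing Hartshorne for the normal form and the $j$-invariant formula, since that makes the dependence on $j(E)$ transparent and the three cases fall out immediately from whether $a$, $b$, or neither vanishes.
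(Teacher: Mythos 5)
The paper does not prove this statement: it is quoted verbatim from Hartshorne \cite[Corollary IV 4.7]{H}, so there is no internal argument to compare yours against. Your Weierstrass-form computation is the standard proof of that corollary and is essentially sound: in characteristic $0$ one may put $E$ in the form $y^2z=x^3+axz^2+bz^3$ with $p=(0:1:0)$, identify $\mathrm{Aut}_k(E,p)$ with the group automorphisms of $(E,p)$ via Lemma \ref{H 4.9}, show that any such automorphism preserves the filtration $L(np)$ and hence acts by $(x,y)\mapsto(u^2x,u^3y)$, and count the $u$ with $u^4a=a$, $u^6b=b$ in the three cases $ab\neq 0$, $a=0$, $b=0$. (Hartshorne's own proof is a variant of the same computation using the Legendre form and the induced action on the branch points of the double cover of $\mathbb{P}^1$.)

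Two small points to tighten. First, the theorem asserts the count \emph{for every} $p\in E$, while your normal form places $p$ at the inflection point $(0:1:0)$; you should add that for any $p,q\in E$ the stabilizers $\mathrm{Aut}_k(E,p)$ and $\mathrm{Aut}_k(E,q)$ are conjugate in $\mathrm{Aut}_k\,E$ by the translation $\sigma_{q-p}$ (in the group law with identity $p$), so it suffices to treat one point. Second, your parenthetical ``translations are the only automorphisms moving $p$'' is false as stated -- compositions $\sigma_r\tau$ with $\tau\neq\mathrm{id}$ also move $p$ in general; what you need (and what Proposition \ref{auto} records) is only that $\mathrm{Aut}_k\,E=T\rtimes\mathrm{Aut}_k(E,p)$, i.e.\ every automorphism factors as a translation composed with one fixing $p$, which together with Lemma \ref{H 4.9} gives the identification of $\mathrm{Aut}_k(E,p)$ with the group-automorphism group. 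With these repairs your argument is a complete, self-contained verification of the cited result.
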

For each point $o \in E$, we can define an addition on $E$ so that
$E$ is an abelian group with the identity element $o$ and,
for $p \in E$, 
the map $\sigma_{p}$ defined by $\sigma_{p}(q):=p+q$
is a scheme automorphism of $E$,
called the \textit{translation} by a point $p$.
We write $(E,o)$ 
when we view $E$ as an abelian group with the identity element $o \in E$.

In this paper, we use a {\it Hesse form}
$E_{\la}:=\mathcal{V}(x^{3}+y^{3}+z^{3}-3\lambda xyz)$
where $\lambda \in k$.
It is known that $E_{\la}$ is an elliptic curve in $\mathbb{P}^{2}$ if and only if $\la^{3} \neq 1$.
The {\it $j$-invariant} of $E_{\la}$ is given by the formula
$$
j(E_{\la})=\dfrac{27\lambda^{3}(\lambda^{3}+8)^{3}}{(\lambda^{3}-1)^{3}} 
$$
(\cite[Proposition 2.16]{F}).

Every elliptic curve in $\mathbb{P}^{2}$ 
is projectively equivalent to $E_{\la}$ for some $\la$
with $\la^{3} \neq 1$ (\cite[Corollary 2.18]{F}). 
\begin{thm}[{\cite[Theorem 2.11]{F}}]
	Let $E_{\la}$ be an elliptic curve of a Hesse form in $\mathbb{P}^{2}$ 
        and $o_{\la}:=(1:-1:0) \in E_{\la}$.
	The group structure on $(E_{\la},o_{\la})$ is given as follows {\rm :}
	for $p=(a:b:c)$ and $q=(\al:\be:\ga) \in E_{\la}$,
	$$
	p+q:=
	\begin{cases}
		(ac\be^{2}-b^{2}\al\ga:bc\al^{2}-a^{2}\be\ga:ab\ga^{2}-c^{2}\al\be) \quad &{\rm if}\,\,\,p \neq q,\\
	\\
		(a^{3}b-bc^{3}:ac^{3}-ab^{3}:b^{3}c-a^{3}c) \quad &{\rm if}\,\,\,p=q.
    \end{cases}
    $$
\end{thm}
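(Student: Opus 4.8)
The final theorem asks for an explicit description of the group law on the Hesse cubic $E_\lambda = \mathcal{V}(x^3+y^3+z^3-3\lambda xyz)$ with identity $o_\lambda = (1:-1:0)$: namely that for $p=(a:b:c)$ and $q=(\alpha:\beta:\gamma)$ one has the two explicit formulas displayed (one for $p\neq q$, one for the doubling $p=q$).

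\medskip

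\emph{The plan is to verify the two formulas directly from the geometric definition of the group law via the chord–tangent construction on a plane cubic.} Recall that on a smooth plane cubic $E$ with chosen base point $o$, the sum $p+q$ is characterized by: if the line through $p$ and $q$ (or the tangent line at $p$, when $p=q$) meets $E$ in the third point $r$, then $p+q = o * r$, where $o*r$ denotes the third intersection point of $E$ with the line through $o$ and $r$. Equivalently, $p+q+r \sim 3o$ in $\mathrm{Pic}\,E$ precisely when $p,q,r$ are collinear, and $o*r$ is the unique point with $(o*r) + o + r \sim 3o$. So the whole computation reduces to two instances of the operation ``given two points on $E_\lambda$, find the third intersection point of the line they span with $E_\lambda$.''

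\medskip

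\emph{First I would set up the "third point" operation explicitly.} Given distinct $p=(a:b:c)$ and $q=(\alpha:\beta:\gamma)$ on $E_\lambda$, parametrize the line $\ell = \{\,sp+tq : (s:t)\in\mathbb{P}^1\,\}$, substitute into the Hesse cubic $F(x,y,z)=x^3+y^3+z^3-3\lambda xyz$, and obtain a binary cubic form $F(sp+tq)$ in $(s,t)$ which vanishes at $(s:t)=(1:0)$ and $(0:1)$. Factoring out $st$ leaves a linear form, whose root gives the third point $r=s_0 p + t_0 q$; the coefficients are polynomial in $a,b,c,\alpha,\beta,\gamma$ and in $\lambda$, but the terms involving $\lambda$ should cancel using $F(p)=F(q)=0$, which is the mechanism that makes the final formulas $\lambda$-free. \emph{Then} I would specialize: take $r$ = third point of the line $\overline{pq}$, compute $o_\lambda * r$ = third point of the line $\overline{o_\lambda\, r}$ through $(1:-1:0)$, and simplify to reach the stated expression $(ac\beta^2 - b^2\alpha\gamma : bc\alpha^2 - a^2\beta\gamma : ab\gamma^2 - c^2\alpha\beta)$. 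For the doubling formula, the only change is that the line $\overline{pp}$ is the tangent to $E_\lambda$ at $p$, whose direction is the gradient-orthogonal vector; substituting the tangent parametrization into $F$ now gives a cubic in $(s,t)$ with a \emph{double} root at $(1:0)$, and factoring out $s^2$ leaves the linear form whose root yields the third point, then apply $o_\lambda *(-)$ again.

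\medskip

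\emph{Where the work really lies.} The conceptual content is light — it is entirely the chord–tangent law — so the main obstacle is purely computational: carrying out the two polynomial divisions and the ensuing simplifications while keeping the homogeneous degrees straight, and in particular exhibiting the cancellations that eliminate $\lambda$. A clean way to organize this, which I would adopt, is to exploit the $\mathbb{Z}/3$-symmetry of the Hesse form: the cyclic permutation $(x:y:z)\mapsto(y:z:x)$ is an automorphism of $E_\lambda$, so it suffices to compute one coordinate of $p+q$ honestly and obtain the other two by symmetry, cutting the algebra by two-thirds. I would also double-check the outcome against sanity constraints: the claimed $p+q$ should be symmetric in $p\leftrightarrow q$ (it is, by inspection), should reduce correctly when $q = o_\lambda$ (giving back $p$), and should be consistent with the doubling formula in the limit $q\to p$. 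Since the paper only needs the formulas as a computational tool for describing $\mathrm{Aut}_k E_\lambda$ and the translations $\sigma_p$, a verification-style proof — "plug the candidate formula into the collinearity/Picard characterization and check" — is both sufficient and the least error-prone route, and that is the route I would take.
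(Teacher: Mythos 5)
The paper does not prove this statement at all: it is quoted verbatim from Frium \cite[Theorem 2.11]{F} and used as a black box, so there is no internal argument to compare yours against. Your proposed route --- define the group law by the chord--tangent construction with base point $o_{\lambda}$, compute the third intersection point of the line $\overline{pq}$ (resp.\ the tangent at $p$) with $E_{\lambda}$ by factoring the binary cubic $F(sp+tq)$, and then reflect through $o_{\lambda}$ --- is the standard and correct way to establish these formulas, and your sanity checks ($p\leftrightarrow q$ symmetry, $q=o_{\lambda}$ returning $p$, consistency of the two cases as $q\to p$) are the right ones. Two caveats keep this from being a complete proof rather than a correct plan. First, the computation is never actually executed, and the entire content of the statement is that computation; the $\mathbb{Z}/3$-symmetry trick you mention does genuinely cut the work, but the elimination of $\lambda$ via $F(p)=F(q)=0$ still has to be exhibited. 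Second, the displayed formulas are only generically valid: for instance with $q=(b:a:c)=-p$ and $a^{3}=b^{3}$, or with $q=o_{\lambda}$ and $c=0$, all three coordinates vanish, so a careful proof must either restrict to the open locus where the output is nonzero or supply the rotated variants of the formula that cover the degenerate pairs --- a known feature of Hesse-form addition laws that your verification scheme should acknowledge. (Incidentally, your parenthetical that collinearity of $p,q,r$ is equivalent to $p+q+r\sim 3o$ uses that $o_{\lambda}=(1:-1:0)$ is an inflection point of $E_{\lambda}$; it is one, being a $3$-torsion point, but this deserves a word.)
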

Throughout this paper, 
we fix the above group structure on $E_{\la}$ 
with the identity $o_{\la}:=(1:-1:0) \in E_{\la}$.
\subsection{Automorphism groups}
\begin{lem}[{\cite[Lemma IV 4.9]{H}}]\label{H 4.9}
	Let $(E,o)$ and $(E',o')$ be two elliptic curves in $\mathbb{P}^{2}$. 
	If $\varphi:E \rightarrow E'$ is a morphism of schemes sending $o$ to $o'$, 
	then $\varphi$ is also a group homomorphism.
\end{lem}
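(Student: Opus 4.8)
The plan is to exploit the classical fact that any morphism between elliptic curves decomposes as a translation followed by a group homomorphism. Concretely, given a morphism of schemes $\varphi\colon E\to E'$ with $\varphi(o)=o'$, I would first reduce to the case where $\varphi$ is nonconstant: if $\varphi$ is constant it must be the constant map to $o'$ (since it sends $o$ to $o'$), which is trivially a group homomorphism. So assume $\varphi$ is nonconstant, hence finite and surjective (a nonconstant morphism of smooth projective curves is finite).

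The key step is to show $\varphi$ respects addition, i.e. $\varphi(p+q)=\varphi(p)+\varphi(q)$ for all $p,q\in E$. Here the standard approach is via the group law expressed through the degree-zero Picard group: the map $E\to \operatorname{Pic}^0(E)$, $p\mapsto [p]-[o]$, is an isomorphism of groups (this is where the Abel--Jacobi theorem / Riemann--Roch for genus $1$ enters, and is what makes the group law on an elliptic curve "linear"). Given $\varphi$, pullback on divisor classes gives a homomorphism $\varphi^{*}\colon \operatorname{Pic}^0(E')\to\operatorname{Pic}^0(E)$; dually, pushforward $\varphi_{*}$ (or the norm map) gives a homomorphism $\operatorname{Pic}^0(E)\to\operatorname{Pic}^0(E')$. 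Chasing $p\mapsto [p]-[o]$ through $\varphi_{*}$ and using $\varphi(o)=o'$, one gets $\varphi_{*}([p]-[o])=[\varphi(p)]-[\varphi(o)]=[\varphi(p)]-[o']$ up to the degree bookkeeping (multiplying by $\deg\varphi$), and then additivity of $\varphi$ follows from additivity of $\varphi_{*}$ together with the identifications $E\cong\operatorname{Pic}^0(E)$ and $E'\cong\operatorname{Pic}^0(E')$. Since the statement as quoted is attributed to Hartshorne \cite[Lemma IV 4.9]{H}, I would in practice simply cite that proof; but if I had to reproduce it, the Picard-group argument is the cleanest route, and I would spell out the compatibility of the isomorphism $q\mapsto[q]-[o]$ with $\varphi_{*}$ carefully.

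The main obstacle is precisely that compatibility check: making sure the Abel--Jacobi isomorphism intertwines the geometric group law on points with the Picard pushforward, including keeping track of multiplicities (the factor $\deg\varphi$) and the fact that $\varphi_{*}[o]=\deg\varphi\cdot[o']$ only after dividing out, not on the nose. An alternative, more elementary obstacle-avoiding route is to compose $\varphi$ with a translation $\tau\colon E'\to E'$ and with the known group structure to reduce to showing that an isogeny (base-point preserving morphism) is a homomorphism; but this is circular unless one has the Picard description, so I would not go that way. The cleanest honest answer for the paper is to invoke Hartshorne directly, which is what the excerpt does.

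In summary: the proof is (i) dispose of the constant case; (ii) for nonconstant $\varphi$, use that $\varphi$ is finite surjective and pushforward on $\operatorname{Pic}^0$; (iii) transport via the Abel--Jacobi isomorphisms $E\cong\operatorname{Pic}^0(E)$, $E'\cong\operatorname{Pic}^0(E')$ to conclude $\varphi$ is additive, hence a group homomorphism. The only genuinely delicate point is verifying the naturality square relating the geometric addition on points to $\varphi_{*}$ on divisor classes.
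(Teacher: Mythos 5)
Your proposal is correct and is essentially the paper's approach: the paper offers no proof of its own, simply citing \cite[Lemma IV 4.9]{H}, and the Picard-group argument you sketch (dispose of the constant case; identify $E\cong\operatorname{Pic}^0(E)$ via $p\mapsto[p]-[o]$; push divisor classes forward along $\varphi$) is precisely Hartshorne's proof. One correction: the ``degree bookkeeping'' you flag as the delicate point is not actually there. Over an algebraically closed field the pushforward of a prime divisor is just its image with coefficient one, so $\varphi_{*}[p]=[\varphi(p)]$ and $\varphi_{*}[o]=[o']$ on the nose; the factor $\deg\varphi$ only enters in $\varphi^{*}$ (or in $\varphi_{*}\varphi^{*}=\deg\varphi\cdot\mathrm{id}$), which is not used here. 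Since $\varphi_{*}$ sends principal divisors to principal divisors (via the norm on function fields) and preserves degree, applying it to $[p+q]-[o]\sim[p]+[q]-2[o]$ immediately gives $[\varphi(p+q)]-[o']\sim[\varphi(p)]+[\varphi(q)]-2[o']$, i.e.\ $\varphi(p+q)=\varphi(p)+\varphi(q)$, with no division required.
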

We set the following notations:
\begin{enumerate}[{\rm (i)}]
	\item $T:=\{ \sigma_{p}\in{\rm Aut}_{k}\,E\,|\,p \in E \}$ and
	$T_{\la}:=\{ \sigma_{p} \in {\rm Aut}_{k}\,E_{\la}\,|\,p \in E_{\la} \}$.
	\item $G:={\rm Aut}_{k}(E,o)$ and $G_{\la}:={\rm Aut}_{k}(E_{\la},o_{\la})$.
\end{enumerate}
For $\sigma_{p} \in T$ and $\tau \in G$,
it is easy to see that $\tau\sigma_{p}\tau^{-1}=\sigma_{\tau(p)} \in T$.
\begin{prop}[{cf. \cite[Section 6]{BP}}]\label{auto}
Suppose that $(E,o)$ is an elliptic curve in $\mathbb{P}^{2}$.
If $\Phi:G \rightarrow {\rm Aut}\,T$ is the group homomorphism
defined by $\Phi_{\tau}(\sigma_{p})=\sigma_{\tau(p)}$ for $\tau \in G$ and $\sigma_{p} \in T$,
then ${\rm Aut}_{k}\,E \cong T\rtimes_{\Phi} G$.
\end{prop}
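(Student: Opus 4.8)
The plan is to exhibit $\mathrm{Aut}_k\,E$ as an internal semidirect product of the normal subgroup $T$ of translations and the subgroup $G=\mathrm{Aut}_k(E,o)$. First I would show that $T$ is a normal subgroup of $\mathrm{Aut}_k\,E$. The computation $\tau\sigma_p\tau^{-1}=\sigma_{\tau(p)}$ for $\tau\in G$ and $\sigma_p\in T$ is already noted in the excerpt; for a general $\psi\in\mathrm{Aut}_k\,E$ I would write $\psi=\sigma_{q}\circ\tau_0$ (once the decomposition below is established) and reduce to that case, or—more cleanly—argue directly: for any $\psi\in\mathrm{Aut}_k\,E$ and any $p\in E$, the automorphism $\psi\sigma_p\psi^{-1}$ is a translation because it acts on the group $(E,o')$ (with $o'=\psi(o)$) as a translation, and translations form a subgroup independent of the choice of origin. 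So $T\trianglelefteq\mathrm{Aut}_k\,E$.

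Next I would establish that $T\cap G=\{\mathrm{id}\}$ and $TG=\mathrm{Aut}_k\,E$. For the intersection: if $\sigma_p\in G$ then $\sigma_p(o)=o$, i.e. $p+o=o$, so $p=o$ and $\sigma_p=\mathrm{id}$. For generation: given $\psi\in\mathrm{Aut}_k\,E$, set $q:=\psi(o)$ and consider $\tau:=\sigma_{-q}\circ\psi$. Then $\tau(o)=\sigma_{-q}(q)=o$, so by Lemma \ref{H 4.9} the map $\tau$ is a group homomorphism of $(E,o)$, hence $\tau\in\mathrm{Aut}_k(E,o)=G$; thus $\psi=\sigma_q\circ\tau\in TG$. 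Combined with normality of $T$, the standard recognition criterion for internal semidirect products gives $\mathrm{Aut}_k\,E=T\rtimes G$, with the conjugation action of $G$ on $T$ being precisely $\Phi$.

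Finally I would check that $\Phi$ as defined is a well-defined group homomorphism into $\mathrm{Aut}\,T$: well-definedness of $\Phi_\tau$ follows because $p\mapsto\sigma_{\tau(p)}$ is a bijection of $T$ (as $\tau$ is a bijection of $E$), it is a group automorphism because $\sigma_{p}\sigma_{p'}=\sigma_{p+p'}$ and $\tau(p+p')=\tau(p)+\tau(p')$ (Lemma \ref{H 4.9} again, since $\tau$ fixes $o$), and $\tau\mapsto\Phi_\tau$ respects composition since $\Phi_{\tau\tau'}(\sigma_p)=\sigma_{\tau\tau'(p)}=\Phi_\tau(\sigma_{\tau'(p)})=\Phi_\tau\Phi_{\tau'}(\sigma_p)$. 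Then I would remark that this $\Phi$ is the one induced by conjugation, which is exactly what makes the abstract isomorphism $T\rtimes_\Phi G\xrightarrow{\sim}\mathrm{Aut}_k\,E$, $(\sigma_p,\tau)\mapsto\sigma_p\tau$, work.

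The only genuinely delicate point is the surjectivity step $\psi=\sigma_q\circ\tau$ together with the verification that $\tau$ really lands in $G$: this rests entirely on Lemma \ref{H 4.9} (a morphism of elliptic curves fixing the origin is automatically a group homomorphism), and on knowing that every such $\psi$, being an automorphism of the abstract scheme $E$, does not a priori respect the group law—so the translation correction is what repairs this. I expect no serious obstacle beyond assembling these pieces; the argument is essentially the classical fact that the automorphism group of an elliptic curve (as a variety) is the semidirect product of translations by automorphisms fixing a point, here transported to the plane-cubic setting where "automorphism of $E$" already means "restriction of a projective automorphism" and Theorem \ref{Ha} controls the finite factor $G$.
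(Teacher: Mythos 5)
Your proof is correct and is precisely the standard argument that the paper leaves to the reader (it only cites \cite[Section 6]{BP} and records the conjugation formula $\tau\sigma_{p}\tau^{-1}=\sigma_{\tau(p)}$ just before the statement): recognize ${\rm Aut}_{k}\,E$ as an internal semidirect product by showing $T$ is normal, $T\cap G=\{{\rm id}\}$, and $TG={\rm Aut}_{k}\,E$ via the translation correction $\tau:=\sigma_{-\psi(o)}\circ\psi$ together with Lemma \ref{H 4.9}. No gaps; this matches the intended approach.
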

\begin{thm}\label{generat}
	Let $E_{\la}$ be an elliptic curve in $\mathbb{P}^{2}$.
	A generator $\tau_{\la}$ of $G_{\la}$ is given by
	$$
	\begin{cases}
	\tau_{\la}(a:b:c):=(b:a:c) 
	\hfill \text{if } j(E_{\la}) \neq 0,12^3, \\
	\\
	\tau_{\la}(a:b:c):=(b:a:c\ep)
	\hfill \text{if } \la=0 \,\,(\text{so that }j(E_{\la})=0), \\
	\\
	\tau_{\la}(a:b:c):=(a\ep^2+b\ep+c:a\ep+b\ep^2+c:a+b+c)
	\\ \hspace{5cm} \text{if } \la=1+\sqrt{3} 
        \,\,(\text{so that }j(E_{\la})=12^{3}), 
	\end{cases}
	$$
	where $\ep$ is a primitive $3$rd root of unity.
	In particular, $G_{\la}$ is the subgroup of 
        ${\rm Aut}_{k}(\mathbb{P}^{2},E_{\la})$.
\end{thm}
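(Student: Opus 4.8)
The plan is to describe, for each value of $j(E_\la)$, an explicit element of $\mathrm{Aut}_k(\mathbb{P}^2, E_\la)$ that fixes $o_\la = (1:-1:0)$, check that it indeed preserves $E_\la$, and then use the order count from Theorem \ref{Ha} together with Lemma \ref{H 4.9} to conclude it generates $G_\la$. The key point that makes this manageable is that $G_\la = \mathrm{Aut}_k(E_\la, o_\la)$ is cyclic of order $2$, $6$, or $4$ (these are the only finite subgroups of elliptic curve automorphism groups in characteristic zero, and Theorem \ref{Ha} pins down the order), so it suffices to exhibit one automorphism of the correct order fixing $o_\la$ and lifting to $\mathbb{P}^2$.

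First I would handle the generic case $j(E_\la) \neq 0, 12^3$. Here $\la \notin \{0\}$ and $\la$ is not the special value giving $j = 12^3$. The candidate $\tau_\la(a:b:c) = (b:a:c)$ visibly preserves the polynomial $x^3 + y^3 + z^3 - 3\la xyz$ (it swaps $x^3$ and $y^3$ and fixes the $xyz$ term), hence restricts to an automorphism of $E_\la$; it fixes $o_\la = (1:-1:0)$; and it has order $2$ as a projective transformation (and is not the identity on $E_\la$). Since $|G_\la| = 2$ by Theorem \ref{Ha}, $\tau_\la$ generates $G_\la$. This is the case where one should also remark, via Lemma \ref{H 4.9}, that $\tau_\la$ is automatically a group homomorphism of $(E_\la, o_\la)$ — in fact it must be the inversion map $p \mapsto -p$, which one can confirm directly from the given group law (the sum formula for $p \neq q$ is symmetric under the coordinate swap in a way compatible with negation).

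Next, for $\la = 0$ (so $j(E_\la) = 0$), the curve is the Fermat cubic $x^3 + y^3 + z^3$, and one checks that $\tau_\la(a:b:c) = (b:a:c\ep)$ with $\ep$ a primitive cube root of unity preserves $x^3 + y^3 + z^3$ (since $\ep^3 = 1$), fixes $o_\la = (1:-1:0)$ (as $\ep \cdot 0 = 0$), and has order $6$ (the swap contributes a factor of $2$, the scaling by $\ep$ a factor of $3$, and these commute), matching $|G_0| = 6$. For $\la = 1 + \sqrt{3}$ (so $j(E_\la) = 12^3$: one should verify this by plugging into the $j$-invariant formula, which is the one nontrivial arithmetic check), the candidate $\tau_\la(a:b:c) = (a\ep^2 + b\ep + c : a\ep + b\ep^2 + c : a + b + c)$ must be shown to preserve $x^3 + y^3 + z^3 - 3\la xyz$ — this is the step I expect to be the main obstacle, since it is a genuine identity among cubic forms that depends on the precise value $\la = 1+\sqrt 3$ and on properties of $\ep$ — and to fix $o_\la$ (substituting $(1,-1,0)$ gives $(\ep^2 - \ep : \ep - \ep^2 : 0) = (1:-1:0)$, using $\ep^2 - \ep \neq 0$), and to have order $4$, matching $|G_\la| = 4$. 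In each of the three cases, once the candidate is shown to lie in $\mathrm{Aut}_k(E_\la, o_\la)$ with the right order, it generates the cyclic group $G_\la$ by the order count, and since in all three cases the formula is given by a linear substitution on $(a:b:c)$, we get $G_\la \subseteq \mathrm{Aut}_k(\mathbb{P}^2, E_\la)$, which is the final assertion. The hard part will be the $j = 12^3$ verification that the stated order-$4$ linear map genuinely stabilizes the Hesse cubic with $\la = 1 + \sqrt 3$; the other two cases are essentially immediate symmetry observations.
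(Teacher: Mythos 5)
Your proposal follows essentially the same route as the paper: for each of the three cases, exhibit an explicit linear candidate, check that it preserves the Hesse cubic and fixes $o_{\la}$, and conclude generation from the order count supplied by Theorem \ref{Ha}. The one computation you defer --- that the order-$4$ map stabilizes $\mathcal{V}(x^3+y^3+z^3-3(1+\sqrt{3})xyz)$ --- is exactly the calculation the paper carries out (it reduces to the identity $\sum(a\ep^2+b\ep+c)^3 = 3(a^3+b^3+c^3)+18abc$ and the choice of $\la$), so the argument is complete once that routine check is written down.
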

\begin{proof}
\begin{enumerate}[(i)]
\item If $j(E_{\la})\neq 0,12^{3}$, then $|G_{\la}|=2$ by Theorem \ref{Ha}.
Let
$
\tau_{\la}=
\begin{pmatrix}
0&1&0\\
1&0&0\\
0&0&1
\end{pmatrix} \in {\rm PGL}_{3}(k) \cong {\rm Aut}_{k}\,\mathbb{P}^{2}$. 
If $p=(a:b:c) \in E_{\la}$, then $\tau_{\la}(p)=(b:a:c) \in E_{\la}$, so
$\tau_{\la} \in {\rm Aut}_{k}(\mathbb{P}^{2},E_{\la})$.
Since $\tau_{\la}(o_{\la})=o_{\la}$, we have $\tau_{\la} \in G_{\la}$.
By calculations,
$|\tau_{\la}|=2$, so $G_{\la}= \langle \tau_{\la} \rangle $.
\item If $\la=0$ so that $E_{\la}=\mathcal{V}(x^3+y^3+z^3)$,
then $j(E_{\la})=0$, so $|G_{\la}|=6$ by Theorem \ref{Ha}.
Let
$
\tau_{\la}=
\begin{pmatrix}
0&1&0\\
1&0&0\\
0&0&\ep
\end{pmatrix} \in {\rm PGL}_{3}(k) \cong {\rm Aut}_{k}\,\mathbb{P}^{2}
$, 
where $\ep$ is a primitive $3$rd root of unity.
If $p=(a:b:c) \in E_{\la}$, then $\tau_{\la}(p)=(b:a:c\ep) \in E_{\la}$,
so $\tau_{\la} \in {\rm Aut}_{k}(\mathbb{P}^{2},E_{\la})$.
Since $\tau_{\la}(o_{\la})=o_{\la}$, we have $\tau_{\la} \in G_{\la}$.
By calculations,
$|\tau_{\la}|=6$, so $G_{\la}= \langle \tau_{\la} \rangle$.
\item If $\la=1+\sqrt{3}$ so that $E_{\la}=\mathcal{V}(x^3+y^3+z^3-3(1+\sqrt{3})xyz)$, 
then $j(E_{\la})=12^{3}$, so $|G_{\la}|=4$ by Theorem \ref{Ha}.
Let
$
\tau_{\la}=
\begin{pmatrix}
\ep^{2}&\ep&1\\
\ep&\ep^{2}&1\\
1&1&1
\end{pmatrix} \in {\rm PGL}_{3}(k) \cong {\rm Aut}_{k}\,\mathbb{P}^{2}
$.
If $p=(a:b:c) \in E_{\la}$,
then $\tau_{\la}(p)=(a\ep^{2}+b\ep+c:a\ep+b\ep^{2}+c:a+b+c)$.
Since
\begin{align*}
	(a\ep^{2}+b\ep+c)^{3}&+(a\ep+b\ep^{2}+c)^{3}+(a+b+c)^{3}\\
	-3&(1+\sqrt{3})(a\ep^{2}+b\ep+c)(a\ep+b\ep^{2}+c)(a+b+c)\\
	&=3(a^{3}+b^{3}+c^{3})+18abc-3(1+\sqrt{3})(a^{3}+b^{3}+c^{3}-3abc)\\
	&=-3\sqrt{3}(a^{3}+b^{3}+c^{3})+9\sqrt{3}(1+\sqrt{3})abc\\
	&=-3\sqrt{3}(a^{3}+b^{3}+c^{3}-3(1+\sqrt{3})abc)\\
	&=0,
\end{align*}
we have $\tau_{\la}(p) \in E_{\la}$, so $\tau_{\la} \in {\rm Aut}_{k}(\mathbb{P}^{2},E_{\la})$.
Since $\tau_{\la}(o_{\la})=o_{\la}$, we have $\tau_{\la} \in G_{\la}$.
By calculations,
$|\tau_{\la}|=4$, so $G_{\la}=\langle \tau_{\la} \rangle$.
\end{enumerate}
\end{proof}

We fix the above generator $\tau_{\la}$ of $G_{\la}$
for the rest of the paper.
\subsection{Defining Relations} 
\begin{lem}\label{isomorphic}
	Every $3$-dimensional quadratic ${\rm AS}$-regular algebra 
        $A=\mathcal{A}(E,\sigma)$ of {\rm Type EC}
	is isomorphic to $\mathcal{A}(E_{\la},\sigma_{p}\tau_{\la}^{i})$
	where $\la \in k$ with $\la^{3} \neq 1$, $p \in E_{\la}$ 
        and $i \in \mathbb{Z}$ . 
\end{lem}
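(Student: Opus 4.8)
The plan is to reduce the classification of Type EC algebras to the standard Hesse family $E_\la$ by composing projective equivalences with Theorem \ref{gradediso}, and then to use the semidirect product decomposition of $\mathrm{Aut}_k\,E_\la$ from Proposition \ref{auto}. First I would start from an arbitrary Type EC algebra $A=\mathcal{A}(E,\sigma)$. Since $E$ is an elliptic curve in $\mathbb{P}^2$, by \cite[Corollary 2.18]{F} (quoted after Theorem~\ref{pe}) there is a projective equivalence $\phi\colon E\to E_\la$ for some $\la\in k$ with $\la^3\neq 1$. Composing $\sigma$ with $\phi$ transports the automorphism: set $\sigma'=\phi\circ\sigma\circ\phi^{-1}\in\mathrm{Aut}_k\,E_\la$, which extends to an automorphism of $\mathbb{P}^2$ precisely because $\sigma$ does (it is a projective automorphism of $E$, as $A$ is geometric) and $\phi$ is a projective equivalence. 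The diagram
$$
\begin{CD}
E @>\phi>> E_\la\\
@V\sigma VV @VV\sigma' V\\
E @>\phi>> E_\la
\end{CD}
$$
commutes by construction, so Theorem \ref{gradediso} gives $A=\mathcal{A}(E,\sigma)\cong\mathcal{A}(E_\la,\sigma')$ as graded $k$-algebras.

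Next I would identify $\sigma'$ inside $\mathrm{Aut}_k\,E_\la$. By Proposition \ref{auto}, $\mathrm{Aut}_k\,E_\la\cong T_\la\rtimes_\Phi G_\la$, so every element of $\mathrm{Aut}_k\,E_\la$ can be written uniquely as $\sigma_p\tau$ with $\sigma_p\in T_\la$ (a translation) and $\tau\in G_\la=\mathrm{Aut}_k(E_\la,o_\la)$. By Theorem \ref{generat}, $G_\la=\langle\tau_\la\rangle$ is cyclic, generated by the explicit projective automorphism $\tau_\la$ fixing $o_\la$; hence $\tau=\tau_\la^{\,i}$ for some $i\in\mathbb{Z}$. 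Therefore $\sigma'=\sigma_p\tau_\la^{\,i}$ for some $p\in E_\la$ and $i\in\mathbb{Z}$, which gives $A\cong\mathcal{A}(E_\la,\sigma_p\tau_\la^{\,i})$ and completes the proof.

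The only genuine subtlety — and hence the step deserving the most care — is verifying that $\sigma'=\phi\circ\sigma\circ\phi^{-1}$ really lies in $\mathrm{Aut}_k\,E_\la$ in the sense required for $\mathcal{A}(E_\la,\sigma')$ to be defined and for Theorem \ref{gradediso} to apply, i.e.\ that it is an automorphism of $E_\la$ that extends to $\mathbb{P}^2$. For this one uses that $A$ being geometric of Type EC means $\sigma\in\mathrm{Aut}_k\,E$ already comes from an element of $\mathrm{Aut}_k\,\mathbb{P}^2$ (this is built into the notion of a regular geometric pair via Theorem \ref{thmATV1}), together with the fact that $\phi$ extends to an automorphism of $\mathbb{P}^2$ by the definition of projective equivalence; the composite of projective automorphisms is projective, so $\sigma'\in\mathrm{Aut}_k(\mathbb{P}^2,E_\la)$. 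Everything else is a direct application of the cited structural results — Theorem \ref{gradediso} for the isomorphism transfer, \cite[Corollary 2.18]{F} for the Hesse normal form, Proposition \ref{auto} for the semidirect product, and Theorem \ref{generat} for the cyclic generator — so no computations beyond bookkeeping are needed.
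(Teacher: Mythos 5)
Your proof follows the same route as the paper's: reduce $E$ to a Hesse form $E_{\la}$ via a projective equivalence $\phi$, conjugate to $\sigma'=\phi\sigma\phi^{-1}\in\mathrm{Aut}_{k}\,E_{\la}$ so that the square commutes, and then decompose $\sigma'=\sigma_{p}\tau_{\la}^{i}$ using Proposition \ref{auto} and Theorem \ref{generat}. The only cosmetic difference is that the paper invokes \cite[Lemma 2.6 (1)]{MU} for the isomorphism $\mathcal{A}(E,\sigma)\cong\mathcal{A}(E_{\la},\sigma')$, sidestepping the fact that Theorem \ref{gradediso} as stated nominally requires both algebras to be known geometric; otherwise the argument is essentially identical.
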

\begin{proof}
    By Theorem \ref{pe}, there exists $\la \in k$ such that 
    $E$ and $E_{\la}$ are projectively equivalent.
    If we set $\sigma':=\phi\sigma\phi^{-1} \in {\rm Aut}_{k}\,E_{\la}$
    where $\phi:E \rightarrow E_{\la}$ is a projective equivalence,
    then the diagram
    \[ \xymatrix{
    	E \ar[r]^{\phi} \ar[d]_{\sigma} & E_{\la} \ar[d]^{\sigma'} \\
    	E \ar[r]^{\phi} & E_{\la}
    }\]
    commutes, so $\mathcal{A}(E,\sigma) \cong \mathcal{A}(E_{\la},\sigma')$ 
    by \cite[Lemma 2.6 (1)]{MU}. 
    By Proposition \ref{auto} and Theorem \ref{generat}, 
    there exist $p \in E_{\la}$ and $i \in \mathbb{Z}$ such that
    $\sigma'=\sigma_{p}\tau_{\la}^{i}$ 
    where $\langle \tau_{\la} \rangle = G_{\la}={\rm Aut}_{k}(E_{\la},o_{\la})$, 
    so $A \cong \mathcal{A}(E_{\la},\sigma_{p}\tau_{\la}^{i})$.
\end{proof}
We can compute the defining relations 
of $3$-dimensional quadratic AS-regular algebras of Type EC
by using the defining relations of a $3$-dimensional Sklyanin algebra
$$
\mathcal{A}(E,\sigma_{p}) = k\langle x,y,z \rangle
/(ayz+bzy+cx^2,azx+bxz+cy^2,axy+byx+cz^2)
$$
where $p=(a:b:c) \in \mathbb{P}^{2}$. 
We say that a geometric algebra $A$ is {\it of Type EC} 
if the point scheme of $A$ is an elliptic curve. 
\begin{lem}
\label{lem_abc}
Let $E_{\la}$ be an elliptic curve in $\mathbb{P}^{2}$ where $\la^{3} \neq 1$,
$p=(a:b:c) \in E_{\la}$ and $i \in \mathbb{Z}$. Then
$\mathcal{A}(E_{\la},\sigma_{p}\tau_{\la}^{i})$ 
is a geometric algebra of Type EC 
if and only if $abc\neq 0$. 
\end{lem}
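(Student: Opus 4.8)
The plan is to characterize when $\mathcal{A}(E_{\la},\sigma_{p}\tau_{\la}^{i})$ is geometric of Type EC purely in terms of the point $p=(a:b:c)$, by reducing to the Sklyanin case ($i=0$) and analyzing when the automorphism $\sigma_{p}\tau_{\la}^{i}$ extends to $\mathbb{P}^{2}$ with the correct point scheme. Recall that $\mathcal{A}(E,\sigma)$ is \emph{always} a well-defined quadratic algebra once $(E,\sigma)$ is a geometric pair with $E\subset\mathbb{P}^{2}$; the content of being ``of Type EC'' is that the point scheme of the resulting algebra is exactly $E_{\la}$ (an elliptic curve), i.e.\ the algebra satisfies (G1) with $\mathcal{P}(A)=(E_{\la},\sigma_{p}\tau_{\la}^{i})$, and not some degeneration. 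So the statement to prove is: the point scheme of $\mathcal{A}(E_{\la},\sigma_{p}\tau_{\la}^{i})$ equals $E_{\la}$ if and only if $abc\neq 0$.

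First I would handle the Sklyanin case $i=0$. Here $\mathcal{A}(E_{\la},\sigma_{p})$ has the explicit presentation displayed just above the lemma, with $(a:b:c)=p$, and it is classical (Artin–Tate–Van den Bergh) that this three-generator, three-quadratic-relation algebra is AS-regular with point scheme $E_{\la}$ precisely when $p$ avoids the finitely many ``bad'' points — and the standard normalization of the Sklyanin algebra requires exactly $abc\neq 0$ (when one of $a,b,c$ vanishes the relations degenerate and the point scheme jumps, typically to $\mathbb{P}^{2}$ or a non-elliptic cubic). One direction is thus: if $abc\neq 0$, cite the Sklyanin theory (via the presentation given and, e.g., \cite{ATV1}) to conclude geometric of Type EC. Concretely I would verify that the three quadrics defining the relations have zero locus $\mathcal{V}(R)=\{(q,\sigma_{p}(q)) : q\in E_{\la}\}$ by a direct computation using the Hesse-form group law recalled above, and check the rank of the relation space $R\subset V\otimes V$ is $3$, so (G2) holds with $E=E_{\la}$.

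For general $i$, I would use Proposition~\ref{ge}: since $\tau_{\la}\in\mathrm{Aut}_{k}(\mathbb{P}^{2},E_{\la})$ by Theorem~\ref{generat}, writing $\tau_{\la}=\overline{\phi^{\ast}}$ for a suitable $\phi\in\mathrm{GL}(V)$ gives $\mathcal{A}(E_{\la},\sigma_{p}\tau_{\la}^{i})=\mathcal{A}(E_{\la},\sigma_{p})^{\phi^{i}}$ as a twist, and by Proposition~\ref{ge}(3) the twist of a geometric algebra is geometric with point scheme $(E_{\la},\sigma_{p}\tau_{\la}^{i})$ — hence again of Type EC whenever the Sklyanin algebra $\mathcal{A}(E_{\la},\sigma_{p})$ is, i.e.\ whenever $abc\neq 0$. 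Conversely, twisting is invertible ($(A^{\phi})^{\phi^{-1}}=A$), so if $\mathcal{A}(E_{\la},\sigma_{p}\tau_{\la}^{i})$ were geometric of Type EC then so would be $\mathcal{A}(E_{\la},\sigma_{p})$, forcing $abc\neq 0$ by the Sklyanin case. This disposes of both directions at once.

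The main obstacle is the Sklyanin base case: proving cleanly that $abc=0$ forces the point scheme to \emph{fail} to be $E_{\la}$. One must check that when, say, $c=0$ (so $p=(a:b:0)$ with $a^{3}+b^{3}=0$), the relations $ayz+bzy$, $azx+bxz$, $axy+byx$ define an algebra whose zero locus $\mathcal{V}(R)$ strictly contains the graph $\{(q,\sigma_p(q)):q\in E_{\la}\}$ — indeed it will contain a two-dimensional component (e.g.\ a copy of $\mathbb{P}^{2}$ or a product of lines), so $\mathcal{P}(A)\neq E_{\la}$; by symmetry the same holds if $a=0$ or $b=0$. I expect this to come down to an explicit elimination computation showing $\mathcal{V}(R)$ acquires extra points, which I would organize using the $S_1$-type monomial relations that appear and the degeneration of $\sigma_p$ (it no longer restricts to an automorphism of an \emph{elliptic} curve but collapses onto a degenerate cubic). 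A subtlety to flag: one must also ensure that in the good range $abc\neq 0$ the automorphism $\sigma_p\tau_\la^i$ genuinely extends to $\mathrm{PGL}_3(k)$ — this is automatic since $\sigma_p\in T_\la$ extends (translations on a plane cubic are restrictions of $\mathbb{P}^2$-automorphisms) and $\tau_\la$ extends by Theorem~\ref{generat} — so $\sigma_p\tau_\la^i\in\mathrm{Aut}_k(\mathbb{P}^2,E_\la)$ and Proposition~\ref{ge} applies without further hypotheses.
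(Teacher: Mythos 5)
Your strategy is essentially the paper's: reduce to the Sklyanin case $i=0$ by writing $\mathcal{A}(E_{\la},\sigma_{p}\tau_{\la}^{i})=\mathcal{A}(E_{\la},\sigma_{p})^{\phi_{\la}^{i}}$ and applying Proposition~\ref{ge}, then settle the base case by citing \cite{ATV1}. The paper's base case is shorter than what you anticipate: since $p\in E_{\la}$, we have $a^{3}+b^{3}+c^{3}=3\la abc$, so if $abc\neq 0$ then $\bigl((a^{3}+b^{3}+c^{3})/3abc\bigr)^{3}=\la^{3}\neq 1$ and \cite[Section 1]{ATV1} gives Type EC directly, while if $abc=0$ then also $a^{3}+b^{3}+c^{3}=0$ and the same reference says the point scheme is all of $\mathbb{P}^{2}$ --- no elimination computation is required. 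For the converse the paper uses Proposition~\ref{ge}(2) (twisting preserves the point scheme $\mathbb{P}^{2}$) where you use invertibility of twisting; these are interchangeable.

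One assertion in your final paragraph is false and should be removed: translations $\sigma_{p}$ of a plane cubic are \emph{not} in general restrictions of automorphisms of $\mathbb{P}^{2}$. By Theorem~\ref{PE}(1) (equivalently \cite[Lemma 5.3]{Mo1}), $\sigma_{p}$ extends to $\mathbb{P}^{2}$ if and only if $p\in E[3]$, which is exactly the excluded case $abc=0$. Fortunately the claim is not needed: Proposition~\ref{ge} requires only that the twisting automorphism $\overline{\phi_{\la}^{\ast}}=\tau_{\la}$ restrict to an automorphism of $E_{\la}$ (Theorem~\ref{generat}); the automorphism $\sigma$ in the geometric pair $(E,\sigma)$ need not extend to the ambient $\mathbb{P}^{2}$. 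With that sentence deleted, the argument is sound.
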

\begin{proof}
If $abc \neq 0$, then $((a^3+b^3+c^3)/3abc)^3=\la^3\neq1$,
that is, $(a^3+b^3+c^3)^{3} \neq (3abc)^{3}$, so
$\mathcal{A}(E_{\la},\sigma_{p})$ is a $3$-dimensional
quadratic AS-regular algebra of Type EC by \cite[Section 1]{ATV1}.
Since $\mathcal{A}(E_{\la},\sigma_{p})$ is a geometric algebra of Type EC,
$\mathcal{A}(E_{\la},\sigma_{p}\tau_{\la}^{i})$ is
also a geometric algebra of Type EC by Proposition \ref{ge} (3).
If $abc=0$, then the point scheme of $\mathcal{A}(E_{\la},\sigma_{p})$
is $\mathbb{P}^{2}$ by \cite[Section 1]{ATV1}, so
$\mathcal{A}(E_{\la},\sigma_{p}\tau_{\la}^{i})$ is not of Type EC
by Proposition \ref{ge} (2).
\end{proof}
\begin{thm}\label{df}
	Every $3$-dimensional quadratic ${\rm AS}$-regular algebra
	$\mathcal{A}(E,\sigma)$
	of Type ${\rm EC}$ is isomorphic to one of the following algebras 
        $\kang/(f_{1},f_{2},f_{3})${\rm :}
	\begin{enumerate}[{\rm (1)}]
		\item If $j(E) \neq 0,12^{3}$, then 
	\begin{align*}
		&\begin{cases}
		f_{1}=ayz+bzy+cx^2, \\
		f_{2}=azx+bxz+cy^2, \\
		f_{3}=axy+byx+cz^2.
		\end{cases}
	    &\begin{cases}
		 f_1=axz+bzy+cyx, \\
		 f_2=azx+byz+cxy, \\
		 f_3=ay^2+bx^2+cz^2.
		\end{cases}
	\end{align*}
	    where $(a:b:c) \in E_{\la}$ with $j(E_{\la})=j(E)$ 
            such that $abc \neq 0$.
	    
		\item If $j(E)=0$, then
		\begin{align*}
		&\begin{cases}
		f_1=ayz+bzy+cx^2, \\
		f_2=azx+bxz+cy^2, \\
		f_3=axy+byx+cz^2.
	    \end{cases}
	     &&\begin{cases}
	     f_1=axz+b\ep zy+cyx, \\
	     f_2=a\ep zx+byz+cxy, \\
	     f_3=ay^2+bx^2+c\ep z^2. 
	    \end{cases} \\
	    &\begin{cases}
		f_1=ayz+b\ep^{2}zy+cx^2, \\
		f_2=a\ep^{2}zx+bxz+cy^2, \\
		f_3=axy+byx+c\ep^{2}z^2.
		\end{cases}
		 &&\begin{cases}
		f_1=axz+bzy+cyx, \\
		f_2=azx+byz+cxy, \\
		f_3=ay^2+bx^2+cz^2. 
		\end{cases} \\
		&\begin{cases}
		f_1=ayz+b\ep zy+cx^2, \\
		f_2=a\ep zx+bxz+cy^2, \\
		f_3=axy+byx+c\ep z^2. 
		\end{cases}
		 &&\begin{cases}
		f_1=axz+b\ep^{2}zy+cyx, \\
		f_2=a\ep^{2}zx+byz+cxy, \\
		f_3=ay^2+bx^2+c\ep^{2}z^2.
		\end{cases}
		\end{align*}
		where $(a:b:c) \in E_{0}$ such that $abc \neq 0$ 
                and $\ep$ is a primitive $3$rd root of unity.
		
		\item If $j(E)=12^{3}$, then
		\begin{align*}
		&\begin{cases}
		f_1=ayz+bzy+cx^2, \\
		f_2=azx+bxz+cy^2, \\
		f_3=axy+byx+cz^2.  
		\end{cases}
		\begin{cases}
		f_1=a(\ep x+\ep^{2}y+z)z+b(x+y+z)y
		\\ \hfill +c(\ep^{2}x+\ep y+z)x, \\
		f_2=a(x+y+z)x+b(\ep^{2}x+\ep y+z)z
		\\ \hfill +c(\ep x+\ep^{2}y+z)y, \\
		f_3=a(\ep^{2}x+\ep y+z)y+b(\ep x+\ep^{2}y+z)x
		\\ \hfill +c(x+y+z)z.
		\end{cases} \\
		&\begin{cases}
		f_1=axz+bzy+cyx, \\
		f_2=azx+byz+cxy, \\
		f_3=ay^2+bx^2+cz^2. 
		\end{cases}
		\begin{cases}
		f_1=a(\ep^{2}x+\ep y+z)z+b(x+y+z)y
		\\ \hfill +c(\ep x+\ep^{2}y+z)x, \\
		f_2=a(x+y+z)x+b(\ep x+\ep^{2}y+z)z
		\\ \hfill +c(\ep^{2}x+\ep y+z)y, \\
	    f_3=a(\ep x+\ep^{2}y+z)y+b(\ep^{2}x+\ep y+z)x
	    \\ \hfill +c(x+y+z)z.
		\end{cases} 
		\end{align*}
		where $(a:b:c) \in E_{1+\sqrt{3}}$ 
                such that $abc \neq 0$ and $\ep$ is a primitive $3$rd root of unity.
	\end{enumerate}
\end{thm}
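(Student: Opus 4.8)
The plan is to combine Lemma \ref{isomorphic}, which already reduces every Type EC algebra to one of the form $\mathcal{A}(E_{\la},\sigma_{p}\tau_{\la}^{i})$, with Lemma \ref{lem_abc}, which tells us we may assume $abc\neq 0$, and then to compute the defining relations of $\mathcal{A}(E_{\la},\sigma_{p}\tau_{\la}^{i})$ explicitly for each power $i$. The key structural input is Proposition \ref{ge}(1): if $\overline{\phi^{\ast}}$ restricts to an automorphism of $E_{\la}$, then $\mathcal{A}(E_{\la},\sigma\overline{\phi^{\ast}})=\mathcal{A}(E_{\la},\sigma)^{\phi}$, and twisting a quadratic algebra by $\phi\in\mathrm{GL}(V)$ is the purely formal operation $R\mapsto(\phi\otimes\mathrm{id})(R)$ of Definition \ref{twa}. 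Since $\tau_{\la}\in G_{\la}\subset\mathrm{Aut}_{k}(\mathbb{P}^{2},E_{\la})$ by Theorem \ref{generat}, we may write $\tau_{\la}=\overline{\phi_{\la}^{\ast}}$ for an explicit $\phi_{\la}\in\mathrm{GL}(V)$ (the transpose-inverse of the matrix listed in Theorem \ref{generat}, up to scalar), and then $\mathcal{A}(E_{\la},\sigma_{p}\tau_{\la}^{i})=\mathcal{A}(E_{\la},\sigma_{p})^{\phi_{\la}^{i}}$.

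The computation then proceeds in three cases according to $|G_{\la}|\in\{2,6,4\}$, i.e.\ according to whether $j(E)\neq 0,12^{3}$, $j(E)=0$, or $j(E)=12^{3}$. First I would record the defining relations of the base Sklyanin algebra $\mathcal{A}(E_{\la},\sigma_{p})$ as displayed just before Lemma \ref{lem_abc}, with $R$ spanned by $f_{1}=ayz+bzy+cx^{2}$, $f_{2}=azx+bxz+cy^{2}$, $f_{3}=axy+byx+cz^{2}$; this is the $i=0$ column in each case. Then, for each remaining residue $i$ modulo $|G_{\la}|$, I would apply $\phi_{\la}^{i}\otimes\mathrm{id}$ to $f_{1},f_{2},f_{3}$, i.e.\ substitute $x,y,z$ in the \emph{left} tensor factor of each monomial by the appropriate linear combinations dictated by $\phi_{\la}^{i}$, and simplify. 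For $j(E)\neq 0,12^{3}$ this is a single swap $x\leftrightarrow y$ in the left slot, producing the second column of (1); for $j(E)=0$ one gets the swap composed with scaling $z\mapsto\ep z$, yielding the six columns of (2) as $i$ runs over $0,\dots,5$; for $j(E)=12^{3}$ one substitutes according to the matrix $\begin{pmatrix}\ep^{2}&\ep&1\\\ep&\ep^{2}&1\\1&1&1\end{pmatrix}$ and its powers, giving the four displays of (3). Throughout, Lemma \ref{lem_abc} guarantees that every algebra produced is genuinely of Type EC, and that every Type EC algebra arises this way; the hypothesis $\la^{3}\neq 1$ keeps $E_{\la}$ an elliptic curve and, since $(a:b:c)\in E_{\la}$ forces $a^{3}+b^{3}+c^{3}=3\la abc$, every coefficient appearing is consistent.

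The main obstacle is purely bookkeeping rather than conceptual: one must verify that the matrix $\phi_{\la}$ satisfying $\overline{\phi_{\la}^{\ast}}=\tau_{\la}$ is correctly identified (taking care that $\phi^{\ast}$ is the dual map, so one passes to transpose-inverses, and that projective scaling can be absorbed), and then that applying $\phi_{\la}^{i}\otimes\mathrm{id}$ to the three quadratic generators and collecting terms reproduces \emph{exactly} the polynomials $f_{1},f_{2},f_{3}$ printed in the statement for each $i$. The $j(E)=12^{3}$ case is the most laborious, since $\phi_{\la}$ is a full $3\times 3$ matrix and its powers must be computed and applied to each monomial $yz,zy,x^{2}$, etc.; this is where an arithmetic slip is most likely, and I would double-check it by confirming (as in the proof of Theorem \ref{generat}) that $\tau_{\la}$ indeed preserves $E_{1+\sqrt 3}$ and has order $4$, so that only the residues $i=0,1,2,3$ contribute and $i=2$ does not collapse to $i=0$. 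Once the relations are matched in all three cases, the theorem follows immediately by assembling Lemma \ref{isomorphic}, Lemma \ref{lem_abc}, and Proposition \ref{ge}(1).
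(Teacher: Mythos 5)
Your proposal follows essentially the same route as the paper: reduce to $\mathcal{A}(E_{\la},\sigma_{p}\tau_{\la}^{i})$ via Lemma \ref{isomorphic}, invoke Lemma \ref{lem_abc} for $abc\neq 0$, identify $\tau_{\la}=\overline{\phi_{\la}^{\ast}}$ so that Proposition \ref{ge}(1) gives $\mathcal{A}(E_{\la},\sigma_{p}\tau_{\la}^{i})=\mathcal{A}(E_{\la},\sigma_{p})^{\phi_{\la}^{i}}$, and read off the twisted relations by applying $\phi_{\la}^{i}$ to the left tensor factors of the Sklyanin relations. One small correction: since $\overline{\phi^{\ast}}$ acts on $\mathbb{P}(V^{\ast})$ by the \emph{transpose} of the matrix of $\phi$ (not the transpose-inverse), the correct choice is $\phi_{\la}=\tau_{\la}^{T}$, which here equals the matrix of $\tau_{\la}$ itself because all three matrices in Theorem \ref{generat} are symmetric; your transpose-inverse choice would instead realize $\tau_{\la}^{-1}$, which happens to produce the same list only because $i$ ranges over all residues mod $d_{\la}$.
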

\begin{proof}
	Let $A$ be a $3$-dimensional quadratic AS-regular algebra of Type EC.
	By Lemma \ref{isomorphic} and Proposition \ref{ge} (1), 
	there exist $\la \in k$ with $\la^{3} \neq 1$, 
        $p=(a:b:c) \in E_{\la}$ and $i \in \mathbb{Z}$
	such that 
        $
        A \cong \mathcal{A}(E_{\la},\sigma_{p}\tau_{\la}^{i})
        =\mathcal{A}(E_{\la},\sigma_{p}\overline{\phi_{\la}^{\ast}}^{i})
        =\mathcal{A}(E_{\la},\sigma_{p})^{{\phi}_{\la}^{i}}
        $
	where $\phi_{\la} \in {\rm GL}_{3}(k)$ is given by
	$$\phi_{\la}:=
	\begin{cases}
	\begin{pmatrix}
	0 & 1 & 0 \\
	1 & 0 & 0 \\
	0 & 0 & 1
	\end{pmatrix} \quad &\text{ if } j(E_{\la}) \neq 0,12^3,
	\\
	\begin{pmatrix}
	0 & 1 & 0 \\
	1 & 0 & 0 \\
	0 & 0 & \ep
	\end{pmatrix} \quad &\text{ if } \la =0,
	\\
	\begin{pmatrix}
	\ep^{2} & \ep & 1 \\
	\ep & \ep^{2} & 1 \\
	1 & 1 & 1
	\end{pmatrix} \quad &\text{ if } \la=1+\sqrt{3}.
	\end{cases}$$ \\
	By Lemma \ref{lem_abc}, $abc \neq 0$ and,
	by the definition of a twisted algebra (see Definition \ref{twa}),
	the defining relations of $\mathcal{A}(E_{\la},\sigma_{p})^{\phi_{\la}^{i}}$ 
    are given by
	$$
	a\phi_{\la}^{i}(y)z+b\phi_{\la}^{i}(z)y+c\phi_{\la}^{i}(x)x,
	$$$$
	a\phi_{\la}^{i}(z)x+b\phi_{\la}^{i}(x)z+c\phi_{\la}^{i}(y)y,
	$$$$
	a\phi_{\la}^{i}(x)y+b\phi_{\la}^{i}(y)x+c\phi_{\la}^{i}(z)z.
	$$
    Thus $A$ is isomorphic to one of the listed algebras in the statement.
\end{proof}

\begin{rem}
	Unfortunately, not every algebra listed in Theorem \ref{df} is AS-regular,
	so Theorem \ref{df} does not give a complete 
        list of $3$-dimensional AS-regular algebras of Type EC,
	but a complete list of geometric algebras of Type EC.
	In a subsequent paper \cite{IM},
	we give a geometric characterization of AS-regularity of algebras
	listed in Theorem \ref{df}.
\end{rem}
\subsection{Classification up to graded algebra isomorphism}
By \cite[Corollary 2.18]{F}, for every $E$, 
there exists $\la \in k$ with $\la^{3} \neq 1$ such that $E_{\la}$ and $E$ 
are projectively equivalent.
If $\psi:E_{\la} \rightarrow E$ is a projective equivalence, then
$\Psi:{\rm Aut}_{k}\,E_{\la} \rightarrow {\rm Aut}_{k}\,E$
defined by $\Psi(\sigma):=\psi\sigma\psi^{-1}$ is a group isomorphism.
If $o:=\psi(o_{\la})$, then $\psi:(E_{\la},o_{\la}) \rightarrow (E,o)$
is a group isomorphism by Lemma \ref{H 4.9}, and
$\Psi(\sigma_{p})=\psi\sigma_{p}\psi^{-1}=\sigma_{\psi(p)} \in T$
for $\sigma_{p} \in T_{\la}$.
For the rest paper, we fix
\begin{enumerate}[{\rm (a)}]
	\item a projective equivalence $\psi:E_{\la} \rightarrow E$,
	
	\item the group isomorphism
	$\Psi:{\rm Aut}_{k}\,E_{\la} \rightarrow {\rm Aut}_{k}\,E$
	defined by 
        $$
        \Psi(\sigma):=\psi\sigma\psi^{-1}
        , $$
	\item the identity element $o:=\psi(o_{\la})$ of $E$, and
	
	\item the generator $\tau:=\Psi(\tau_{\la})$ of $G={\rm Aut}_{k}(E,o)$.
\end{enumerate}
We set the following notations:
\begin{enumerate}[{\rm (i)}]
		\item $E[3]:=\{ p \in E\,|\,3p=o \}$ 
                and $E_{\la}[3]:=\{ p \in E_{\la}\,|\,3p=o_{\la} \}$.
		\item $T[3]:=\{ \sigma \in T\,|\,\sigma^{3}={\rm id}_{E} \}
		=\{ \sigma_{p} \in T\,|\,p \in E[3] \}$ and \\
		$T_{\la}[3]:=\{ \sigma \in T_{\la}\,|\,\sigma^{3}={\rm id}_{E_{\la}} \}
		=\{ \sigma_{p} \in T_{\la}\,|\,p \in E_{\la}[3] \}$.
		\item $d:=|G|$ and $d_{\la}:=|G_{\la}|$.
		\item $F_{i}:=\{ p-\tau^{i}(p) \in E\,|\,p \in E[3] \}$ 
                for $i \in \mathbb{Z}_{d}$ and \\
		$F_{\la,i}:=\{ p-\tau_{\la}^{i}(p) \in E_{\la}\,|\,p \in E_{\la}[3] \}$
		for $i \in \mathbb{Z}_{d_{\la}}$.
	\end{enumerate}
It is easy to check the following lemma.

\begin{lem}\label{lem2}
	The following hold. 
	\begin{enumerate}[{\rm (1)}]
		\item $E[3]=\psi(E_{\la}[3])$.
		\item $F_{i}=\psi(F_{\la,i})$.
		\item ${\rm Aut}_{k}(\mathbb{P}^{2},E)=\Psi({\rm Aut}_{k}(\mathbb{P}^{2},E_{\la}))$.
		\item $G=\Psi(G_{\la})$.
		\item $T=\Psi(T_{\la})$.
		\item $T[3]=\Psi(T_{\la}[3])$.
	\end{enumerate}
\end{lem}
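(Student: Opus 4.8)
The plan is to deduce all six assertions from a single structural fact already assembled before the statement: by Lemma~\ref{H 4.9}, the fixed projective equivalence $\psi\colon E_{\la}\to E$ (which sends $o_{\la}$ to $o$) restricts to a \emph{group} isomorphism $(E_{\la},o_{\la})\to(E,o)$, and by construction $\Psi(\sigma)=\psi\sigma\psi^{-1}$, $\tau=\Psi(\tau_{\la})$, and $\Psi(\sigma_{p})=\sigma_{\psi(p)}$ for $\sigma_{p}\in T_{\la}$. Each item below is then just a transport of structure along $\psi$.

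Before that I would make items (3)--(6) unambiguous by recording the bookkeeping point that, since $E$ is a plane cubic not contained in a line, an element of ${\rm Aut}_{k}\,\mathbb{P}^{2}$ is determined by its restriction to $E$ (it is already pinned down by finitely many points of $E$ in general position), so ${\rm Aut}_{k}(\mathbb{P}^{2},E)$ is canonically a subgroup of ${\rm Aut}_{k}\,E$, and likewise for $E_{\la}$. Fixing an extension $\tilde{\psi}\in{\rm Aut}_{k}\,\mathbb{P}^{2}$ of $\psi$, conjugation by $\tilde{\psi}$ carries ${\rm Aut}_{k}(\mathbb{P}^{2},E_{\la})$ into ${\rm Aut}_{k}(\mathbb{P}^{2},E)$: for $\phi\in{\rm Aut}_{k}(\mathbb{P}^{2},E_{\la})$ the map $\tilde{\psi}\phi\tilde{\psi}^{-1}$ lies in ${\rm Aut}_{k}\,\mathbb{P}^{2}$ and restricts on $E$ to $\psi(\phi|_{E_{\la}})\psi^{-1}=\Psi(\phi|_{E_{\la}})$, hence stabilizes $E$; the inverse is conjugation by $\tilde{\psi}^{-1}$, and on restrictions this conjugation is exactly $\Psi$. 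This proves (3).

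The rest is immediate. For (1), a group isomorphism carries $3$-torsion onto $3$-torsion, so $\psi(E_{\la}[3])=E[3]$. For (2), $\psi\tau_{\la}=\tau\psi$ and additivity of $\psi$ give $\psi(p-\tau_{\la}^{i}(p))=\psi(p)-\tau^{i}(\psi(p))$ for every $p$, and combining with (1) yields $\psi(F_{\la,i})=F_{i}$. For (4), $\Psi(\sigma)$ fixes $o=\psi(o_{\la})$ whenever $\sigma$ fixes $o_{\la}$, so $\Psi(G_{\la})\subseteq G$, while $\Psi^{-1}(\sigma)=\psi^{-1}\sigma\psi$ fixes $o_{\la}$ for $\sigma\in G$, giving equality; by Theorem~\ref{generat}, $G_{\la}\subseteq{\rm Aut}_{k}(\mathbb{P}^{2},E_{\la})$, so (3) keeps everything inside ${\rm Aut}_{k}(\mathbb{P}^{2},E)$. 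For (5), $\Psi(\sigma_{p})=\sigma_{\psi(p)}$ and the bijectivity of $\psi\colon E_{\la}\to E$ give $\Psi(T_{\la})=\{\sigma_{\psi(p)}\mid p\in E_{\la}\}=\{\sigma_{q}\mid q\in E\}=T$. For (6), combine (5) with (1): $\Psi(T_{\la}[3])=\{\sigma_{\psi(p)}\mid p\in E_{\la}[3]\}=\{\sigma_{q}\mid q\in E[3]\}=T[3]$. There is no genuinely hard step here — the lemma really is ``easy to check'' — and the only place deserving a sentence of care is (3), namely the identification of ${\rm Aut}_{k}(\mathbb{P}^{2},E)$ with a subgroup of ${\rm Aut}_{k}\,E$ that makes the stated equality literally meaningful.
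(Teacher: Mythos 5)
Your proof is correct and is exactly the routine transport-of-structure verification that the paper omits with the remark ``it is easy to check'': everything follows from $\psi$ being a group isomorphism $(E_{\la},o_{\la})\to(E,o)$ (Lemma \ref{H 4.9}), $\Psi(\sigma_{p})=\sigma_{\psi(p)}$, and $\tau=\Psi(\tau_{\la})$. Your extra care on item (3) --- identifying ${\rm Aut}_{k}(\mathbb{P}^{2},E)$ with a subgroup of ${\rm Aut}_{k}\,E$ via the restriction map and conjugating by an extension $\tilde{\psi}$ of the projective equivalence --- is a legitimate and worthwhile clarification of the convention the paper adopts when it says elements of ${\rm Aut}_{k}(X,Y)$ are viewed in two ways.
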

\begin{thm}\label{PE}
	The following hold. 
	\begin{enumerate}[{\rm (1)}]
		\item ${\rm Aut}_{k}(\mathbb{P}^{2},E) \cap T=T[3]$.
		\item $G \leq {\rm Aut}_{k}(\mathbb{P}^{2},E)$.
		\item ${\rm Aut}_{k}(\mathbb{P}^{2},E) \cong T[3] \rtimes G$.
	\end{enumerate}
\end{thm}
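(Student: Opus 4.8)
The plan is to deduce all three parts from the semidirect product decomposition $\mathrm{Aut}_{k}\,E\cong T\rtimes_{\Phi}G$ of Proposition \ref{auto}, keeping in mind that, since the cubic curve $E$ spans $\mathbb{P}^{2}$, restriction to $E$ realizes $\mathrm{Aut}_{k}(\mathbb{P}^{2},E)$ as a subgroup of $\mathrm{Aut}_{k}\,E$. Part $(2)$ I would get immediately: Theorem \ref{generat} gives $G_{\la}\le\mathrm{Aut}_{k}(\mathbb{P}^{2},E_{\la})$, and since $\Psi$ is a group isomorphism with $G=\Psi(G_{\la})$ and $\mathrm{Aut}_{k}(\mathbb{P}^{2},E)=\Psi(\mathrm{Aut}_{k}(\mathbb{P}^{2},E_{\la}))$ by Lemma \ref{lem2}(3) and (4), applying $\Psi$ yields $G\le\mathrm{Aut}_{k}(\mathbb{P}^{2},E)$.

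For part $(1)$, I would prove the two inclusions separately. For ``$\subseteq$'': if $\sigma_{p}\in\mathrm{Aut}_{k}(\mathbb{P}^{2},E)\cap T$, then $\sigma_{p}$ is the restriction to $E$ of a projective automorphism preserving $E$, hence $\sigma_{p}$ permutes the inflection points of $E$. Since $o_{\la}=(1:-1:0)$ is an inflection point of the Hesse cubic $E_{\la}$ (a base point of the Hesse pencil) and a projective equivalence carries inflection points to inflection points, $o=\psi(o_{\la})$ is an inflection point of $E$; and because the group law on $E$ has its identity at the inflection point $o$, the inflection points of $E$ are exactly $E[3]$. As $o\in E[3]$, this forces $p=\sigma_{p}(o)\in E[3]$, i.e.\ $\sigma_{p}\in T[3]$. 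For ``$\supseteq$'': by Lemma \ref{lem2}(6) and the isomorphism $\Psi$ it suffices to show $T_{\la}[3]\le\mathrm{Aut}_{k}(\mathbb{P}^{2},E_{\la})$. I would consider $g_{1},g_{2}\in\mathrm{PGL}_{3}(k)$ with $g_{1}(x:y:z)=(y:z:x)$ and $g_{2}(x:y:z)=(x:\ep y:\ep^{2}z)$: each preserves the polynomials $x^{3}+y^{3}+z^{3}$ and $xyz$, hence every member of the Hesse pencil, in particular each $E_{\la}$, so $\langle g_{1},g_{2}\rangle\le\mathrm{Aut}_{k}(\mathbb{P}^{2},E_{\la})$. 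A short check shows $g_{1}^{3}=g_{2}^{3}=\mathrm{id}$ and $g_{1}g_{2}=g_{2}g_{1}$ in $\mathrm{PGL}_{3}(k)$, so $\langle g_{1},g_{2}\rangle\cong(\mathbb{Z}/3\mathbb{Z})^{2}$ has order $9$; another short check shows the fixed loci of $g_{1}$ and $g_{2}$ in $\mathbb{P}^{2}$ miss $E_{\la}$ (here one uses $\la^{3}\ne1$), so each $g_{i}$ restricts to a fixed-point-free automorphism of $E_{\la}$, necessarily a translation, namely by $g_{i}(o_{\la})$, which therefore lies in $E_{\la}[3]$. Since restriction to $E_{\la}$ is injective, $\langle g_{1},g_{2}\rangle|_{E_{\la}}$ is a subgroup of $T_{\la}[3]$ of order $9=|T_{\la}[3]|$, hence equals $T_{\la}[3]$, giving $T_{\la}[3]\le\mathrm{Aut}_{k}(\mathbb{P}^{2},E_{\la})$.

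For part $(3)$ I would then just do bookkeeping inside $T\rtimes_{\Phi}G$. By $(1)$ and $(2)$, $T[3]$ and $G$ are subgroups of $\mathrm{Aut}_{k}(\mathbb{P}^{2},E)$; since $T\trianglelefteq\mathrm{Aut}_{k}\,E$ and $\mathrm{Aut}_{k}(\mathbb{P}^{2},E)\cap T=T[3]$ by $(1)$, the subgroup $T[3]$ is normal in $\mathrm{Aut}_{k}(\mathbb{P}^{2},E)$, and $T[3]\cap G\le T\cap G=\{\mathrm{id}\}$. Finally, for $\phi\in\mathrm{Aut}_{k}(\mathbb{P}^{2},E)\le\mathrm{Aut}_{k}\,E$, write $\phi=\sigma_{p}\tau^{i}$ with $\sigma_{p}\in T$ and $\tau^{i}\in G$; then $\tau^{i}\in G\le\mathrm{Aut}_{k}(\mathbb{P}^{2},E)$ by $(2)$, so $\sigma_{p}=\phi(\tau^{i})^{-1}\in\mathrm{Aut}_{k}(\mathbb{P}^{2},E)\cap T=T[3]$, and $\phi\in T[3]\cdot G$. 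Hence $\mathrm{Aut}_{k}(\mathbb{P}^{2},E)=T[3]\rtimes G$, the action of $G$ on $T[3]$ being the restriction of $\Phi$.

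The hard part will be the inclusion ``$\supseteq$'' in part $(1)$, i.e.\ that translation by a $3$-torsion point of a plane cubic extends to a projective automorphism; conceptually this is the surjectivity of the theta group of $\mathcal{O}_{E}(3o)$ onto $E[3]$ (the Heisenberg symmetry of the Hesse pencil), which the explicit matrices $g_{1},g_{2}$ make concrete. The rest is either quoted (part $(2)$, from Theorem \ref{generat}) or routine group theory (part $(3)$), and the inclusion ``$\subseteq$'' of part $(1)$ rests only on the classical identification of the inflection points of a plane cubic with its $3$-torsion.
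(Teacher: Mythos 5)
Your proposal is correct, and for parts (2) and (3) it follows essentially the same route as the paper: part (2) is obtained verbatim by transporting $G_{\la}\leq{\rm Aut}_{k}(\mathbb{P}^{2},E_{\la})$ (Theorem \ref{generat}) through $\Psi$ via Lemma \ref{lem2}, and part (3) is the same bookkeeping inside $T\rtimes_{\Phi}G$ (you spell out normality of $T[3]$ and triviality of $T[3]\cap G$, which the paper leaves implicit, but the decomposition $\phi=\sigma_{p}\tau^{i}$ with $\sigma_{p}\in{\rm Aut}_{k}(\mathbb{P}^{2},E)\cap T=T[3]$ is exactly the paper's argument). The genuine difference is part (1): the paper disposes of it with a citation to \cite[Lemma 5.3]{Mo1}, whereas you prove it from scratch. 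Your two inclusions both check out: for $\subseteq$, a projective automorphism preserving $E$ permutes its inflection points, $o=\psi(o_{\la})$ is an inflection point since $o_{\la}=(1:-1:0)$ is one for the Hesse form and the chord--tangent group law with identity at an inflection point has $E[3]$ equal to the inflection locus, so $p=\sigma_{p}(o)\in E[3]$; for $\supseteq$, the Heisenberg matrices $g_{1}(x:y:z)=(y:z:x)$ and $g_{2}(x:y:z)=(x:\varepsilon y:\varepsilon^{2}z)$ preserve every member of the Hesse pencil, commute in ${\rm PGL}_{3}(k)$ (their ${\rm GL}_{3}$ commutator is the scalar $\varepsilon$), act without fixed points on $E_{\la}$ precisely because $\la^{3}\neq 1$, and hence restrict to an order-$9$ subgroup of $T_{\la}[3]$, which must be all of it; one then transports back by $\Psi$ using Lemma \ref{lem2} (3) and (6). (A small point worth making explicit: it suffices to verify fixed-point-freeness for $g_{1}$ and $g_{2}$ themselves, since once each restricts to a translation so does every product, $T_{\la}$ being a group.) What your approach buys is a self-contained proof of the only nontrivial ingredient, making concrete the Heisenberg symmetry that the cited lemma encapsulates; what it costs is length, which is presumably why the paper prefers the citation.
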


\begin{proof}
	\begin{enumerate}[{\rm (1)}]
		\item See \cite[Lemma 5.3]{Mo1}.
		
		\item Since $G_{\la} \leq {\rm Aut}_{k}(\mathbb{P}^{2},E_{\la})$
		by Theorem \ref{generat},
		$
		G=\Psi(G_{\la}) \leq \Psi({\rm Aut}_{k}(\mathbb{P}^{2},E_{\la}))
		$
		$={\rm Aut}_{k}(\mathbb{P}^{2},E)
		$
		by Lemma \ref{lem2} (3) and (4).
		
		\item Since ${\rm Aut}_{k}\,E \cong T \rtimes G$ by Proposition \ref{auto}
		and $G \leq {\rm Aut}_{k}(\mathbb{P}^{2},E)$ by ($2$),
		for $\sigma_{p}\tau^{i} \in {\rm Aut}_{k}\,E$,
		$\sigma_{p}\tau^{i} \in {\rm Aut}_{k}(\mathbb{P}^{2},E)$ if and only if
		$\sigma_{p} \in {\rm Aut}_{k}(\mathbb{P}^{2},E)$
                if and only if
		$\sigma_{p} \in T[3]$ by ($1$), 
                so ${\rm Aut}_{k}(\mathbb{P}^{2},E) \cong T[3] \rtimes G$.
		 
	\end{enumerate}
\end{proof}
\begin{rem}
	Theorem \ref{PE} ($2$) depends of the special choice of the identity element $o \in E$.
	In fact, if we choose an arbitrary point $p \in E$, then it is hardly the case that
	${\rm Aut}_{k}(E,p) \leq {\rm Aut}_{k}(\mathbb{P}^{2},E)$.
\end{rem}
\begin{lem}
\label{lem_E[3]}
Let $E$ be an elliptic curve in $\mathbb{P}^{2}$, $p \in E$
and $i \in \mathbb{Z}$. Then
$\mathcal{A}(E,\sigma_{p}\tau^{i})$ is a geometric algebra of Type EC 
if and only if 
$p\in E \setminus E[3]$. 
\end{lem}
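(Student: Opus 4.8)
The plan is to transport everything to the Hesse model $E_\la$ via the fixed projective equivalence $\psi\colon E_\la\to E$, invoke Lemma \ref{lem_abc}, and then identify its condition ``$abc\ne 0$'' with ``$p\notin E[3]$''. First I would fix $\la\in k$ with $\la^{3}\ne 1$ together with the projective equivalence $\psi\colon E_\la\to E$ provided by \cite[Corollary 2.18]{F}, set $q:=\psi^{-1}(p)\in E_\la$, and write $q=(a:b:c)$ in homogeneous coordinates. Since $\psi\colon(E_\la,o_\la)\to(E,o)$ is a group isomorphism (Lemma \ref{H 4.9}), we have $\sigma_{p}=\sigma_{\psi(q)}=\Psi(\sigma_{q})$, while $\tau=\Psi(\tau_\la)$ by our choice of $\tau$; as $\Psi$ is a group homomorphism this gives $\sigma_{p}\tau^{i}=\Psi(\sigma_{q}\tau_{\la}^{i})=\psi(\sigma_{q}\tau_{\la}^{i})\psi^{-1}$. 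Hence the square
$$
\begin{CD}
E_\la @>\psi>> E\\
@V\sigma_{q}\tau_{\la}^{i}VV @VV\sigma_{p}\tau^{i}V\\
E_\la @>\psi>> E
\end{CD}
$$
commutes, so $\mathcal{A}(E_\la,\sigma_{q}\tau_{\la}^{i})\cong\mathcal{A}(E,\sigma_{p}\tau^{i})$ as graded $k$-algebras by \cite[Lemma 2.6 (1)]{MU}, exactly as in the proof of Lemma \ref{isomorphic}. Both algebras have the same degree-one space $\langle x,y,z\rangle$, and the isomorphism is induced by the restriction of an automorphism of $\mathbb{P}^{2}$, so it takes defining relations to defining relations equivariantly and point schemes to projectively equivalent point schemes; hence being a geometric algebra of Type EC is preserved, and $\mathcal{A}(E,\sigma_{p}\tau^{i})$ is a geometric algebra of Type EC if and only if $\mathcal{A}(E_\la,\sigma_{q}\tau_{\la}^{i})$ is, which by Lemma \ref{lem_abc} holds if and only if $abc\ne 0$.

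The second, and crucial, step is to show that for $q=(a:b:c)\in E_\la$ one has $abc\ne 0$ if and only if $q\notin E_\la[3]$; combined with $E[3]=\psi(E_\la[3])$ (Lemma \ref{lem2} (1)), i.e.\ $p\in E[3]\Longleftrightarrow q\in E_\la[3]$, this finishes the proof. Here I would first observe that $o_\la=(1:-1:0)$ is an inflection point of the cubic $E_\la=\mathcal{V}(x^{3}+y^{3}+z^{3}-3\la xyz)$: the tangent line at $o_\la$ is $x+y+\la z=0$, and substituting $x=-y-\la z$ into the cubic yields $(1-\la^{3})z^{3}$, which (as $\la^{3}\ne 1$) vanishes only at $z=0$, to order $3$, so the tangent meets $E_\la$ at $o_\la$ with multiplicity $3$. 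Since the identity of $(E_\la,o_\la)$ is a flex, three points of $E_\la$ sum to $o_\la$ exactly when they are collinear; consequently $3q=o_\la$ if and only if the tangent at $q$ meets $E_\la$ only at $q$, i.e.\ $q$ is a flex of $E_\la$. Finally I would use that the nine flexes of a Hesse cubic are precisely its nine points with a vanishing coordinate: the points $(0:1:-\ep^{j})$, $(1:0:-\ep^{j})$, $(1:-\ep^{j}:0)$ for $j=0,1,2$ lie on $E_\la$, are pairwise distinct, and a direct computation with the group law recalled before Lemma \ref{isomorphic} shows each is $3$-torsion; since $|E_\la[3]|=9$ they exhaust $E_\la[3]$. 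Hence $q\in E_\la[3]\Longleftrightarrow abc=0$, as required.

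The bookkeeping in the first step and the appeal to Lemma \ref{lem_abc} are routine once the reduction to $E_\la$ is in place; I expect the only real obstacle to be the (small) second step, namely the identification $E_\la[3]=\{(a:b:c)\in E_\la\mid abc=0\}$. This rests on $o_\la$ being a flex of $E_\la$ and on the description of the nine flexes of a Hesse cubic, which can be supplied either from the classical theory of the Hesse pencil or by the short explicit verification indicated above.
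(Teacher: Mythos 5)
Your proof is correct and follows essentially the same route as the paper: reduce to the Hesse form via the fixed projective equivalence $\psi$ and apply Lemma \ref{lem_abc}, translating $abc\neq 0$ into $p\notin E[3]$. The only difference is that you supply a self-contained verification (via the flex at $o_{\la}$ and the nine flexes of the Hesse cubic) of the identification $E_{\la}[3]=\{(a:b:c)\in E_{\la}\mid abc=0\}$, which the paper simply takes from \cite{F}.
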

\begin{proof}
	For $q=(a:b:c) \in E_{\la}$,
	$q \in E_{\la} \setminus E_{\la}[3]$ if and only if $abc \neq 0$
	if and only if $\mathcal{A}(E_{\la},\sigma_{q}\tau_{\la}^{i})$
	is a geometric algebra of Type EC by Lemma \ref{lem_abc}, so
	\begin{align*}
	\mathcal{A}(E,\sigma_{p}\tau^{i}) 
        &\cong \mathcal{A}(E_{\la},\Psi^{-1}(\sigma_{p}\tau^{i}))\\
	&=\mathcal{A}(E_{\la},\Psi^{-1}(\sigma_{p})\Psi^{-1}(\tau)^{i})\\
	&=\mathcal{A}(E_{\la},\sigma_{\psi^{-1}(p)}\tau_{\la}^{i})
	\end{align*}
	is a geometric algebra of Type EC if and only if 
        $\psi^{-1}(p) \in E_{\la} \setminus E_{\la}[3]$
	if and only if $p \in E \setminus E[3]$.
\end{proof}
We use the following two formulas. 
\begin{lem}\label{formula}
For $\sigma_{p}\tau^{i}, \sigma_{q}\tau^{j}$ 
and $\sigma_{r}\tau^{l} \in {\rm Aut}_{k}\,E$,
\begin{equation}
	(\sigma_{q}\tau^{j})(\sigma_{r}\tau^{l})(\sigma_{p}\tau^{i})^{-1}
	=\sigma_{q+\tau^{j}(r)-\tau^{l+j-i}(p)}\tau^{l+j-i}, \label{formula1}
\end{equation}
and
\begin{equation}
	(\sigma_{q}\tau^{j})^{-1}(\sigma_{r}\tau^{l})(\sigma_{p}\tau^{i})
	=\sigma_{\tau^{-j}(-q+r+\tau^{l}(p))}\tau^{l+i-j}. \label{formula2}
\end{equation}
\end{lem}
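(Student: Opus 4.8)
The plan is to reduce both displayed identities to one commutation rule together with bookkeeping of exponents. The key fact is the relation $\tau^{n}\sigma_{p}\tau^{-n}=\sigma_{\tau^{n}(p)}$ for all $n\in\mathbb{Z}$: for $n=1$ this is exactly the observation recorded just before Proposition~\ref{auto} (namely $\tau\sigma_{p}\tau^{-1}=\sigma_{\tau(p)}$), and the general case follows by iterating and then passing to inverses. Moreover $\tau=\Psi(\tau_{\la})$ is a genuine group automorphism of $(E,o)$ — $\tau_{\la}$ generates $G_{\la}={\rm Aut}_{k}(E_{\la},o_{\la})$ by Theorem~\ref{generat}, hence is a group automorphism by Lemma~\ref{H 4.9}, and $\psi\colon(E_{\la},o_{\la})\to(E,o)$ is a group isomorphism — so we may move powers of $\tau$ across sums freely: $\tau^{n}(p+q)=\tau^{n}(p)+\tau^{n}(q)$ and $\tau^{n}(-p)=-\tau^{n}(p)$. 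We also use the elementary composition law $\sigma_{p}\sigma_{q}=\sigma_{p+q}$ and the inverse formula $(\sigma_{p}\tau^{n})^{-1}=\tau^{-n}\sigma_{-p}=\sigma_{-\tau^{-n}(p)}\tau^{-n}$, all immediate from the semidirect-product description ${\rm Aut}_{k}E\cong T\rtimes_{\Phi}G$ of Proposition~\ref{auto}.

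With these tools, the verification of \eqref{formula1} is a three-move chain of equalities: first rewrite $(\sigma_{p}\tau^{i})^{-1}$ as $\sigma_{-\tau^{-i}(p)}\tau^{-i}$; then collapse $(\sigma_{q}\tau^{j})(\sigma_{r}\tau^{l})$ to $\sigma_{q+\tau^{j}(r)}\tau^{j+l}$ by commuting $\tau^{j}$ past $\sigma_{r}$; finally multiply by $\sigma_{-\tau^{-i}(p)}\tau^{-i}$, commuting $\tau^{j+l}$ past that translation so that the translation vector picks up the term $\tau^{j+l}\!\left(-\tau^{-i}(p)\right)=-\tau^{l+j-i}(p)$, while the $\tau$-parts multiply to $\tau^{l+j-i}$. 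This produces precisely $\sigma_{q+\tau^{j}(r)-\tau^{l+j-i}(p)}\tau^{l+j-i}$. The argument for \eqref{formula2} is entirely parallel: write $(\sigma_{q}\tau^{j})^{-1}=\sigma_{-\tau^{-j}(q)}\tau^{-j}$, push the $\tau$-powers rightward through $\sigma_{r}$ and then $\sigma_{p}$ one at a time, collect the three resulting translations, and invoke additivity of $\tau$ to factor $-\tau^{-j}(q)+\tau^{-j}(r)+\tau^{l-j}(p)=\tau^{-j}\!\left(-q+r+\tau^{l}(p)\right)$, the $\tau$-part being $\tau^{l+i-j}$.

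Since each step is a mechanical application of the commutation and composition rules above, there is no real obstacle; the only thing to watch is careful tracking of the exponents of $\tau$ (so that the translation vector acquires $\tau^{l+j-i}$ rather than a differently-grouped power) and the fact that $\tau$ is a group automorphism — not merely a scheme automorphism — which is exactly what licenses moving it across $+$ and $-$. We will therefore simply display the two chains of equalities explicitly and check that the final translation vectors and $\tau$-exponents agree with the stated right-hand sides.
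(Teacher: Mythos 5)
Your proposal is correct: the paper's own proof is just ``By calculations,'' and your computation via the semidirect-product rules $\sigma_{p}\sigma_{q}=\sigma_{p+q}$, $\tau^{n}\sigma_{p}\tau^{-n}=\sigma_{\tau^{n}(p)}$, and the additivity of $\tau$ (a group automorphism since it fixes $o$) is exactly the intended calculation, carried out with the exponents tracked correctly. Nothing further is needed.
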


\begin{proof}
	By calculations.
\end{proof}
By Proposition \ref{auto},
for $\sigma_{p}\tau^{i}, \sigma_{q}\tau^{j} \in {\rm Aut}_{k}\,E \cong T \rtimes G$,
$\sigma_{p}\tau^{i}=\sigma_{q}\tau^{j}$ 
if and only if $p=q$ in $E$ and $i=j$ in $\mathbb{Z}_{d}$, 
\begin{thm}\label{classify}
	Let $E$ be an elliptic curve in $\mathbb{P}^{2}$, $p,q \in E \setminus E[3]$ 
        and $i,j \in \mathbb{Z}_{d}$.
	Then $\mathcal{A}(E,\sigma_{p}\tau^{i}) \cong \mathcal{A}(E,\sigma_{q}\tau^{j})$
	if and only if $i=j$ and $q=\tau^{l}(p)+r$ 
        where $r \in F_{i}$ and $l \in \mathbb{Z}_{d}$.
\end{thm}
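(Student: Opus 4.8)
The plan is to reduce everything to Mori's graded-isomorphism criterion (Theorem~\ref{gradediso}), the structure theorem for $\mathrm{Aut}_k(\mathbb{P}^2,E)$ (Theorem~\ref{PE}), and the conjugation identity \eqref{formula1}. First I would note that, since $p,q\in E\setminus E[3]$, both $\mathcal{A}(E,\sigma_p\tau^i)$ and $\mathcal{A}(E,\sigma_q\tau^j)$ are geometric algebras of Type EC by Lemma~\ref{lem_E[3]}, so Theorem~\ref{gradediso} applies and tells us that $\mathcal{A}(E,\sigma_p\tau^i)\cong\mathcal{A}(E,\sigma_q\tau^j)$ if and only if there is a projective equivalence $\phi\colon E\to E$ with $\phi\circ(\sigma_p\tau^i)=(\sigma_q\tau^j)\circ\phi$, i.e. $\phi(\sigma_p\tau^i)\phi^{-1}=\sigma_q\tau^j$ in $\mathrm{Aut}_k E$.

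Next I would use that a projective equivalence from $E$ to $E$ is by definition an element of $\mathrm{Aut}_k(\mathbb{P}^2,E)$, and that by Theorem~\ref{PE}~(3), together with the fact that $\tau$ generates $G$, every such $\phi$ can be written uniquely as $\phi=\sigma_s\tau^l$ with $\sigma_s\in T[3]$ (equivalently $s\in E[3]$) and $l\in\mathbb{Z}_d$. The key computation is then to conjugate $\sigma_p\tau^i$ by $\phi=\sigma_s\tau^l$: applying \eqref{formula1} with the appropriate substitutions yields
$$(\sigma_s\tau^l)(\sigma_p\tau^i)(\sigma_s\tau^l)^{-1}=\sigma_{\tau^l(p)+s-\tau^i(s)}\,\tau^i.$$
Since in $\mathrm{Aut}_k E\cong T\rtimes G$ an equality $\sigma_{p'}\tau^{i'}=\sigma_{q'}\tau^{j'}$ forces $i'=j'$ in $\mathbb{Z}_d$ and $p'=q'$ in $E$, the relation $\phi(\sigma_p\tau^i)\phi^{-1}=\sigma_q\tau^j$ holds exactly when $i=j$ and $q=\tau^l(p)+(s-\tau^i(s))$. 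As $s$ runs over $E[3]$ the element $s-\tau^i(s)$ runs over $F_i$ by definition of $F_i$, and $l$ runs over $\mathbb{Z}_d$; conversely any $\phi=\sigma_s\tau^l$ with $s\in E[3]$ lies in $\mathrm{Aut}_k(\mathbb{P}^2,E)$ by Theorem~\ref{PE}~(1),(3). Combining the two directions gives precisely: $\mathcal{A}(E,\sigma_p\tau^i)\cong\mathcal{A}(E,\sigma_q\tau^j)$ if and only if $i=j$ and $q=\tau^l(p)+r$ for some $r\in F_i$ and $l\in\mathbb{Z}_d$.

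I do not expect a genuine obstacle here; the proof is essentially bookkeeping once the preceding machinery is in place. The points requiring care are: (a) keeping exponents of $\tau$ consistently modulo $d=|G|$, so that the conclusion ``$i=j$'' is an equality in $\mathbb{Z}_d$ and $F_i$ is well defined; (b) verifying the index substitutions in \eqref{formula1} are done correctly, since the formula has three separate translation/exponent slots; and (c) checking in the ``if'' direction that the $\phi=\sigma_s\tau^l$ we construct is genuinely a projective equivalence, which is immediate from Theorem~\ref{PE} since $\sigma_s\in T[3]$ extends to $\mathbb{P}^2$ and $\tau^l\in G\le\mathrm{Aut}_k(\mathbb{P}^2,E)$.
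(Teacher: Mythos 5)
Your proposal is correct and follows essentially the same route as the paper: reduce to the existence of $\varphi=\sigma_{s}\tau^{l}\in\mathrm{Aut}_{k}(\mathbb{P}^{2},E)$ via Theorem~\ref{gradediso} and Theorem~\ref{PE}, then compute the conjugation with Lemma~\ref{formula} and read off $i=j$ and $q=\tau^{l}(p)+s-\tau^{i}(s)\in\tau^{l}(p)+F_{i}$. The only cosmetic difference is that you solve $\varphi(\sigma_{p}\tau^{i})\varphi^{-1}=\sigma_{q}\tau^{j}$ whereas the paper rewrites the same commutativity as $(\sigma_{q}\tau^{j})\varphi(\sigma_{p}\tau^{i})^{-1}=\varphi$; the resulting computation is identical.
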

\begin{proof}
	Since $\mathcal{A}(E,\sigma_{p}\tau^{i})$ 
        and $\mathcal{A}(E,\sigma_{q}\tau^{j})$
	are geometric algebras of Type EC by Lemma \ref{lem_E[3]},
	$\mathcal{A}(E,\sigma_{p}\tau^{i}) \cong \mathcal{A}(E,\sigma_{q}\tau^{j})$
	if and only if
	there is $\varphi=\sigma_{s}\tau^{l} \in {\rm Aut}_{k}(\mathbb{P}^{2},E)$
	where $s \in E[3]$ and $l \in \mathbb{Z}_{d}$ such that the diagram
	\[\xymatrix{
		E \ar[r]^{\varphi} \ar[d]_{\sigma_{p}\tau^{i}} & E \ar[d]^{\sigma_{q}\tau^{j}} \\
		E \ar[r]^{\varphi} & E
	}\]
	commutes by Theorem \ref{gradediso}, that is,
	$$(\sigma_{q}\tau^{j})(\sigma_{s}\tau^{l})(\sigma_{p}\tau^{i})^{-1}=\sigma_{s}\tau^{l}.$$
	By Lemma \ref{formula}\,(\ref{formula1}),
	$(\sigma_{q}\tau^{j})(\sigma_{s}\tau^{l})(\sigma_{p}\tau^{i})^{-1}
	=\sigma_{q+\tau^{j}(s)-\tau^{l+j-i}(p)}\tau^{l+j-i}$, 
        so we have
	$q+\tau^{j}(s)-\tau^{l+j-i}(p)=s$ and $l+j-i=l$, 
        that is, $q=\tau^{l}(p)+s-\tau^{i}(s)$ and $i=j$.
	By the definition of $F_{i}$, $s-\tau^{i}(s) \in F_{i}$,
	so $\mathcal{A}(E,\sigma_{p}\tau^{i}) \cong \mathcal{A}(E,\sigma_{q}\tau^{j})$
	if and only if $i=j$ and $q=\tau^{l}(p)+r$ 
        where $r \in F_{i}$ and $l \in \mathbb{Z}_{d}$.
\end{proof}
By \cite{F}, we label the elements of $E_{\la}[3]$ by
\begin{align*}
&p_{0}:=o_{\la}:=(1:-1:0),\quad p_{1}:=(1:-\ep:0),\quad p_{2}:=(1:-\ep^{2}:0), \\
&p_{3}:=(1:0:-1),\quad p_{4}:=(1:0:-\ep),\quad p_{5}:=(1:0:-\ep^2), \\
&p_{6}:=(0:1:-1),\quad p_{7}:=(0:1:-\ep),\quad p_{8}:=(0:1:-\ep^2).
\end{align*}
We calculate 
$F_{\la,i}=\{ p_{l}-\tau_{\la}^{i}(p_{l}) \in E_{\la}\,|\,0 \leq l \leq 8 \}$ 
for each $i \in \mathbb{Z}_{d_{\la}}$.

\begin{lem}\label{genetor}
	\begin{enumerate}[{\rm (1)}]
		\item If $j(E_{\la}) \neq 0,12^3$, then
		$$F_{\la,i}=\begin{cases}
		\{ p_{0} \} &{\rm if}\,\,\, i=0,\\
		E_{\la}[3]  &\rm{otherwise}.
		\end{cases}$$
		
		\item If $\la=0$, then
		$$
		F_{\la,i}=\begin{cases}
		\{ p_{0} \} &{\rm if} \,\,\, i=0,\\
		\langle p_{1} \rangle=\{ p_{0},p_{1},p_{2} \} &{\rm if}\,\,\, i=2,4,\\
		E_{\la}[3] &\rm{otherwise}.
		\end{cases}
		$$
		
		\item If $\la=1+\sqrt{3}$, then
		$$
		F_{\la,i}=\begin{cases}
		\{ p_{0} \} &{\rm if}\,\,\, i=0,\\
		E_{\la}[3] &\rm{otherwise}.
		\end{cases}
		$$
	\end{enumerate}
\end{lem}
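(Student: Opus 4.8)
\textbf{Proof plan for Lemma \ref{genetor}.}
The plan is to compute, for each of the three $j$-invariant cases, the action of $\tau_\la^i$ on the nine $3$-torsion points $p_0,\dots,p_8$, and then read off $F_{\la,i} = \{p_l - \tau_\la^i(p_l)\}$ directly. The main tools are the explicit generator $\tau_\la$ from Theorem \ref{generat} and the Hesse-form addition law from \cite[Theorem 2.11]{F}; since $\tau_\la$ fixes $o_\la = p_0$, Lemma \ref{H 4.9} tells us $\tau_\la$ is a group automorphism of $(E_\la[3],+)$, so $p - \tau_\la^i(p)$ is a group homomorphism $E_\la[3]\to E_\la[3]$ in the variable $p$ for fixed $i$. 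This is the conceptual backbone: $F_{\la,i}$ is precisely the image of the endomorphism $\mathrm{id} - \tau_\la^i$ of the group $E_\la[3] \cong (\mathbb{Z}/3)^2$, hence is automatically a subgroup, and I only need to identify which one.

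First I would dispose of $i=0$ in all three cases: $\tau_\la^0 = \mathrm{id}$, so $p_l - \tau_\la^0(p_l) = o_\la = p_0$ for every $l$, giving $F_{\la,0} = \{p_0\}$. Next, for case (1) ($j \neq 0, 12^3$), $\tau_\la$ has order $2$ and acts by $(a:b:c)\mapsto(b:a:c)$; I would check on the listed coordinates that $\tau_\la$ swaps $p_1\leftrightarrow p_2$, swaps $p_3\leftrightarrow p_6$, $p_4\leftrightarrow p_7$, $p_5\leftrightarrow p_8$, and fixes $p_0$. Then $\mathrm{id} - \tau_\la$ kills $p_0$ but, computing one nontrivial value such as $p_1 - \tau_\la(p_1) = p_1 - p_2$ via the addition law and verifying it is not $p_0$, I conclude (since the image is a nonzero subgroup of $(\mathbb{Z}/3)^2$ fixed setwise by $\tau_\la$, and $\tau_\la$ acts as $-1$ on it) that the image has order $\geq 3$; a second independent computation (e.g. $p_3 - p_6$ not in $\langle p_1\rangle$) forces the image to be all of $E_\la[3]$. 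Case (3) ($\la = 1+\sqrt 3$, order $4$) is handled the same way: compute the permutation of $\{p_0,\dots,p_8\}$ induced by the matrix $\tau_\la$ and check that for $i=1,2,3$ the homomorphism $\mathrm{id}-\tau_\la^i$ is surjective, which reduces to evaluating it on two points spanning $E_\la[3]$ and seeing the values span.

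The genuinely delicate case is (2), $\la = 0$, where $\tau_\la$ has order $6$ and $F_{\la,i}$ must come out to be $\langle p_1\rangle = \{p_0,p_1,p_2\}$ exactly when $i = 2, 4$. Here $\tau_\la = \mathrm{diag\text{-}type}$ matrix $(a:b:c)\mapsto(b:a:c\ep)$, so $\tau_\la^2(a:b:c) = (a:b:c\ep^2)$, $\tau_\la^3(a:b:c) = (b:a:c)$, $\tau_\la^4(a:b:c) = (a:b:c\ep)$. The point is that $\tau_\la^2$ and $\tau_\la^4$ fix every point with $c=0$ — i.e. $p_0, p_1, p_2$ — and act nontrivially only in the $c$-coordinate on $p_3,\dots,p_8$; so $p_l - \tau_\la^{2}(p_l) = p_0$ for $l=0,1,2$ and one must compute $p_3 - \tau_\la^2(p_3) = (1:0:-1)-(1:0:-\ep^2)$ and check via the addition law that this lies in $\{p_0,p_1,p_2\}$ and that at least one such value is nonzero, pinning $F_{\la,2}$ down to exactly $\langle p_1\rangle$; $i=4$ is identical with $\ep\leftrightarrow\ep^2$. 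For $i=1,3,5$ one checks $\mathrm{id}-\tau_\la^i$ is surjective as before. The arithmetic obstacle throughout is simply the bookkeeping of the Hesse addition formulas — the $p\neq q$ versus $p=q$ case distinction and the third-root-of-unity identities $1+\ep+\ep^2=0$ — but there is no structural difficulty, since every $F_{\la,i}$ is forced to be one of the three subgroups $\{p_0\}$, $\langle p_1\rangle$, $E_\la[3]$ and we need only evaluate $\mathrm{id}-\tau_\la^i$ on two generators of $E_\la[3]$ to determine which.
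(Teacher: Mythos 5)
Your plan is correct and matches the paper's approach: the paper's entire proof of this lemma is ``By calculations,'' i.e.\ exactly the direct evaluation of $p_{l}-\tau_{\la}^{i}(p_{l})$ on the nine listed $3$-torsion points that you describe. Your additional observation that $\tau_{\la}$ fixes $o_{\la}$, hence by Lemma \ref{H 4.9} is a group automorphism of $E_{\la}[3]$, so that $\mathrm{id}-\tau_{\la}^{i}$ is a group endomorphism of $E_{\la}[3]\cong(\mathbb{Z}/3)^{2}$ and $F_{\la,i}$ is a subgroup determined by its values on two generators, is a genuine streamlining that the paper does not record; the only caution is that the quoted Hesse addition formula for $p\neq q$ can degenerate to $(0:0:0)$ for certain distinct pairs of flexes (e.g.\ $p_{3}+p_{4}$), where one should instead use that three collinear flexes sum to $o_{\la}$.
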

\begin{proof}
	By calculations.
\end{proof}
\begin{exm}\label{exm1}
	Fix $\la \in k$ such that $\la^{3} \neq 1$ 
        and $j(E_{\la}) \neq 0,12^3$ and
	let $p=(a:b:c) \in E_{\la}=\V(x^3+y^3+z^3-3\la xyz)$ 
        such that $abc \neq 0$.
	If $A=\mathcal{A}(E_{\la},\sigma_{p})$, then
	$$A=\kang/(ayz+bzy+cx^2, azx+bxz+cy^2, axy+byx+cz^2),$$
	and $A$ is a $3$-dimensional Sklyanin algebra.
	If $A'=\mathcal{A}(E_{\la},\sigma_{-p})$ where $-p=(b:a:c)$, then
	$$
        A'=\kang/(byz+azy+cx^2, bzx+axz+cy^2, bxy+ayx+cz^2),
        $$
	and $A'$ is also a $3$-dimensional Sklyanin algebra.
	If $A''=\mathcal{A}(E_{\la},\sigma_{p}\tau_{\la})$, then
	$$A''=\kang/(axz+bzy+cyx, azx+byz+cxy, ay^2+bx^2+cz^2).$$
	If $A'''=\mathcal{A}(E_{\la},\sigma_{p+p_{3}}\tau_{\la})$ 
        where $p_{3}:=(1:0:-1) \in E_{\la}[3]$,
	then
	$$A'''=\kang/(bxz+czy+ayx, bzx+cyz+axy, by^2+cx^2+az^2).$$
	By Theorem \ref{classify}, since $-p=\tau_{\la}(p)$ 
        and $p_{3} \in E_{\la}[3]=F_{\la,1}$,
	$A \cong A'$ and $A'' \cong A'''$ but no other pairs are isomorphic.
\end{exm}
\subsection{Classification up to graded Morita equivalence}
We recall that
$o:=\psi(o_{\la})$ and $\tau:=\Psi(\tau_{\la}) \in G={\rm Aut}_{k}(E,o)$.
Since $\tau$ is also a group automorphism of $(E,o)$, 
it follows that $\tau(E[3])=E[3]$.
\begin{lem}\label{E[3]}
	For $p \in E$ and $l \in \mathbb{Z}$,
	if $p-\tau^{l}(p) \in E[3]$, then $p-\tau^{nl}(p) \in E[3]$ 
        for any $n \in \mathbb{Z}$.
\end{lem}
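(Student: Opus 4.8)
The plan is to use a telescoping identity in the abelian group $(E,o)$ together with the fact, recalled just before the statement, that $\tau$ is a group automorphism of $(E,o)$ and hence satisfies $\tau(E[3])=E[3]$. Consequently $\tau^{m}(E[3])=E[3]$ for every $m\in\mathbb{Z}$, and $E[3]$ is closed under addition and under negation.

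First I would treat the case $n\ge 0$. The key observation is the telescoping identity
\[
p-\tau^{nl}(p)=\sum_{k=0}^{n-1}\bigl(\tau^{kl}(p)-\tau^{(k+1)l}(p)\bigr)=\sum_{k=0}^{n-1}\tau^{kl}\bigl(p-\tau^{l}(p)\bigr),
\]
which holds because each $\tau^{kl}$ is a group homomorphism of $(E,o)$, so $\tau^{kl}(p)-\tau^{kl}(\tau^{l}(p))=\tau^{kl}(p-\tau^{l}(p))$, and the middle sum telescopes. By hypothesis $p-\tau^{l}(p)\in E[3]$, and $\tau^{kl}$ maps $E[3]$ into $E[3]$, so every summand lies in $E[3]$; since $E[3]$ is a subgroup, the sum $p-\tau^{nl}(p)$ lies in $E[3]$. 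The case $n=0$ is the trivial identity $p-p=o\in E[3]$.

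For $n<0$, I would first note that applying the group automorphism $\tau^{-l}$ to $p-\tau^{l}(p)\in E[3]$ gives $\tau^{-l}(p)-p\in E[3]$, hence $p-\tau^{-l}(p)=-\bigl(\tau^{-l}(p)-p\bigr)\in E[3]$ since $E[3]$ is closed under negation. Then, writing $n=-m$ with $m>0$ and using $\tau^{nl}=\tau^{m(-l)}=(\tau^{-l})^{m}$, the already-proved $n\ge 0$ case applied with $-l$ in place of $l$ yields $p-\tau^{nl}(p)\in E[3]$, completing the argument.

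There is no genuine obstacle here. The only points needing (minor) care are the justification that $\tau$ preserves $E[3]$ — which is immediate from $\tau$ being a group automorphism of $(E,o)$ — and the sign bookkeeping when $n$ is negative; everything else is a one-line computation inside the abelian group $(E,o)$.
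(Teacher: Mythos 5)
Your proof is correct and follows essentially the same route as the paper's: the telescoping identity $p-\tau^{nl}(p)=\sum_{k=0}^{n-1}\tau^{kl}\bigl(p-\tau^{l}(p)\bigr)$ for $n\ge 0$, plus a reduction of the negative case to the positive one (the paper writes $p-\tau^{nl}(p)=-\tau^{nl}\bigl(p-\tau^{-nl}(p)\bigr)$ and keeps $l$, whereas you replace $l$ by $-l$, but this is only a bookkeeping difference).
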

\begin{proof}
	If $n=0$, then $p-\tau^{nl}(p)=p-p=o \in E[3]$.
	
	For any $n \ge 1$, we can write
	$$p-\tau^{nl}(p)=\sum_{i=0}^{n-1}\tau^{il}(p-\tau^{l}(p)).$$
	Since $p-\tau^{l}(p) \in E[3]$, $\tau^{il}(p-\tau^{l}(p)) \in E[3]$ 
        for $1 \leq i \leq n-1$,
	so $p-\tau^{nl}(p) \in E[3]$.
	
	If $n \leq -1$, then $p-\tau^{nl}(p)=-\tau^{nl}(p-\tau^{-nl}(p))$.
	Since $-n \ge 1$ and $p-\tau^{-nl}(p) \in E[3]$, 
        it follows that $p-\tau^{nl}(p) \in E[3]$ for any $n \leq -1$.
\end{proof}
\begin{thm}
\label{main4}
Let $p,q \in E \setminus E[3]$ and $i,j \in \mathbb{Z}_{d}$. Then
$
{\rm GrMod}\,\mathcal{A}(E,\sigma_{p}\tau^{i}) 
\cong {\rm GrMod}\,\mathcal{A}(E,\sigma_{q}\tau^{j})
$ if and only if
$
p-\tau^{j-i}(p) \in E[3]
$
and there exist $r \in E[3]$ and $l \in \mathbb{Z}_{d}$ such that
$
q=\tau^{l}(p)+r
$. 
\end{thm}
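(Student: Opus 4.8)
The plan is to apply Theorem \ref{gradedequi}. Since $p,q \in E \setminus E[3]$, both $\mathcal{A}(E,\sigma_{p}\tau^{i})$ and $\mathcal{A}(E,\sigma_{q}\tau^{j})$ are geometric algebras of Type EC by Lemma \ref{lem_E[3]}, so ${\rm GrMod}\,\mathcal{A}(E,\sigma_{p}\tau^{i}) \cong {\rm GrMod}\,\mathcal{A}(E,\sigma_{q}\tau^{j})$ if and only if there is a sequence $\{\varphi_{k}\}_{k \in \mathbb{Z}}$ of projective equivalences of $E$ to itself, i.e.\ of elements of ${\rm Aut}_{k}(\mathbb{P}^{2},E)$, such that $\varphi_{k+1}\circ(\sigma_{p}\tau^{i}) = (\sigma_{q}\tau^{j})\circ\varphi_{k}$ for every $k$. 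By Theorem \ref{PE}, each $\varphi_{k}$ is uniquely of the form $\varphi_{k} = \sigma_{s_{k}}\tau^{l_{k}}$ with $s_{k} \in E[3]$ and $l_{k} \in \mathbb{Z}_{d}$.

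First I would rewrite the commuting relations explicitly. Using Lemma \ref{formula} together with the semidirect-product decomposition ${\rm Aut}_{k}\,E \cong T \rtimes G$ of Proposition \ref{auto} to separate the translation part from the $G$-part, the identity $\varphi_{k+1}\circ(\sigma_{p}\tau^{i}) = (\sigma_{q}\tau^{j})\circ\varphi_{k}$ becomes the pair of equations $l_{k+1} = l_{k} + (j-i)$ in $\mathbb{Z}_{d}$ and $s_{k+1} = q + \tau^{j}(s_{k}) - \tau^{l_{k+1}}(p)$ in $E$. Thus a valid sequence exists exactly when these recursions can be solved with every $s_{k} \in E[3]$.

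For the forward implication, suppose such a sequence exists. The $k=0$ equation gives $q = \tau^{l_{1}}(p) + (s_{1} - \tau^{j}(s_{0}))$, and since $E[3]$ is a subgroup of $E$ stable under $\tau$, the element $r := s_{1} - \tau^{j}(s_{0})$ lies in $E[3]$; this is the required form $q = \tau^{l}(p) + r$ with $l := l_{1} \in \mathbb{Z}_{d}$. Subtracting the $k=0$ equation from the $k=1$ equation and using $l_{2} - l_{1} = j-i$ gives $\tau^{l_{1}}(\tau^{j-i}(p) - p) = \tau^{j}(s_{1}-s_{0}) - (s_{2}-s_{1}) \in E[3]$, and applying $\tau^{-l_{1}}$ yields $p - \tau^{j-i}(p) \in E[3]$. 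For the converse, assume $p - \tau^{j-i}(p) \in E[3]$ and $q = \tau^{l}(p) + r$ with $r \in E[3]$ and $l \in \mathbb{Z}_{d}$. Put $l_{k} := l + k(j-i)$ and define $s_{k}$ by $s_{0} := o$ together with the (invertible) recursion $s_{k+1} := q + \tau^{j}(s_{k}) - \tau^{l_{k+1}}(p)$, which determines $s_{k}$ for all $k \in \mathbb{Z}$. Substituting $q = \tau^{l}(p)+r$ one finds $s_{k+1} = r + \tau^{j}(s_{k}) + \tau^{l}(p - \tau^{(k+1)(j-i)}(p))$, and the last summand lies in $E[3]$ by Lemma \ref{E[3]}; hence an induction in both directions shows every $s_{k} \in E[3]$, so every $\varphi_{k} = \sigma_{s_{k}}\tau^{l_{k}}$ lies in ${\rm Aut}_{k}(\mathbb{P}^{2},E)$ by Theorem \ref{PE}, and the two equations hold by construction, giving the required sequence.

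The composition formula of Lemma \ref{formula} and the matching of the two components in each identity are routine; the genuinely load-bearing point is the closure of the recursively defined $\{s_{k}\}$ inside $E[3]$ in the converse, which is exactly the content of Lemma \ref{E[3]} --- without it, even when both stated conditions hold, the sequence of candidate projective equivalences need not consist of elements of ${\rm Aut}_{k}(\mathbb{P}^{2},E)$. As a consistency check, these conditions are strictly weaker than those of Theorem \ref{classify}: Morita equivalence permits $i \neq j$ (asking only $p - \tau^{j-i}(p) \in E[3]$) and allows an arbitrary $r \in E[3]$ in place of $r \in F_{i}$, as one should expect.
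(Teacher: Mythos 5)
Your proof is correct and follows essentially the same route as the paper: both reduce the commuting diagrams of Theorem \ref{gradedequi} to a recursion on the $T[3]\rtimes G$ decomposition of ${\rm Aut}_{k}(\mathbb{P}^{2},E)$ from Theorem \ref{PE}, extract the two stated conditions from consecutive terms of the sequence in the forward direction, and in the converse build the sequence explicitly and use Lemma \ref{E[3]} to keep the translation parts inside $E[3]$. The only (cosmetic) differences are that you run a single two-sided recursion for $s_{k}$ instead of the paper's separate formulas for $n\ge 1$ and $n\le -1$, and you skip the paper's preliminary reduction via Theorem \ref{classify} to the case $q=p+s$.
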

\noindent
\begin{proof}
Suppose that
$
{\rm GrMod}\,\mathcal{A}(E,\sigma_{p}\tau^{i}) 
\cong {\rm GrMod}\,\mathcal{A}(E,\sigma_{q}\tau^{j})
$.
Since $\mathcal{A}(E,\sigma_{p}\tau^{i})$ 
and $\mathcal{A}(E,\sigma_{q}\tau^{j})$
are geometric algebras of Type EC by Lemma \ref{lem_E[3]},
there exists a sequence $\{ \phi_{n} \}_{n \in \mathbb{Z}}$
of ${\rm Aut}_{k}(\mathbb{P}^{2},E)$ 
such that the diagram
	\[ \xymatrix{
		E \ar[r]^{\phi_{n}} \ar[d]_{\sigma_{p}\tau^{i}} 
                & E \ar[d]^{\sigma_{q}\tau^{j}} \\
		E \ar[r]^{\phi_{n+1}} & E
	}\]
	commutes for $n \in \mathbb{Z}$ 
        by Theorem \ref{gradedequi}.
	By Theorem \ref{PE} (3), there exist $r \in E[3]$ 
        and $l \in \mathbb{Z}_{d}$ such that $\phi_{0}=\sigma_{r}\tau^{l}$.
	Since the diagrams
	\[\xymatrix{
		E \ar[r]^{\phi_{-1}} \ar[d]_{\sigma_{p}\tau^{i}} 
                & E \ar[d]^{\sigma_{q}\tau^{j}} \\
		E \ar[r]^{\sigma_{r}\tau^{l}} & E
	}
	\xymatrix{
		E \ar[r]^{\sigma_{r}\tau^{l}} \ar[d]_{\sigma_{p}\tau^{i}} 
                & E \ar[d]^{\sigma_{q}\tau^{j}} \\
		E \ar[r]^{\phi_{1}} & E
	}\]
	commute,
        \begin{align*}
	\phi_{-1}
        &=(\sigma_{q}\tau^{j})^{-1}(\sigma_{r}\tau^{l})(\sigma_{p}\tau^{i}) \\
	&=\sigma_{\tau^{-j}(-q+r+\tau^{l}(p))}\tau^{l+i-j}
	\end{align*}
	and
	\begin{align*}
	\phi_{1}
        &=(\sigma_{q}\tau^{j})(\sigma_{r}\tau^{l})(\sigma_{p}\tau^{i})^{-1} \\
	&=\sigma_{q+\tau^{j}(r)-\tau^{l+j-i}(p)}\tau^{l+j-i}
	\end{align*}
	by Lemma \ref{formula}. 
        Since $\phi_{-1}, \phi_{1} \in {\rm Aut}_{k}(\mathbb{P}^{2},E)$,
	we have
	\begin{align*}
	\tau^{-j}(-q+r+\tau^{l}(p)) &\in E[3], \\
	q+\tau^{j}(r)-\tau^{l+j-i}(p) &\in E[3],
	\end{align*}
	that is,
	\begin{align*}
	s&:=-q+r+\tau^{l}(p)=\tau^{j}(\tau^{-j}(-q+r+\tau^{l}(p))) \in E[3], \\
	t&:=q+\tau^{j}(r)-\tau^{l+j-i}(p) \in E[3].
	\end{align*}
	By the first condition, 
        we have $q=\tau^{l}(p)+r-s$ where $r-s \in E[3]$.
	Since $s+t=r+\tau^{j}(r)+\tau^{l}(p)-\tau^{l+j-i}(p) \in E[3]$, 
        we have
	\begin{align*}
	p-\tau^{j-i}(p)
        &=\tau^{-l}(\tau^{l}(p)-\tau^{l+j-i}(p)) \\
	&=\tau^{-l}(s+t-r-\tau^{j}(r)) \in E[3]. 
	\end{align*}
	
	Conversely, suppose that
	$p-\tau^{j-i}(p) \in E[3]$ and $q=\tau^{l}(p)+r$ 
        where $r \in E[3]$ and $l \in \mathbb{Z}_{d}$.
	By Theorem \ref{classify}, we have
	$$
        \mathcal{A}(E,\sigma_{q}\tau^{j})
        =\mathcal{A}(E,\sigma_{\tau^{l}(p)+r}\tau^{j})
	=\mathcal{A}(E,\sigma_{\tau^{l}(p+\tau^{-l}(r))}\tau^{j}) 
        \cong \mathcal{A}(E,\sigma_{p+\tau^{-l}(r)}\tau^{j}).
        $$
	To show
	$$
        {\rm GrMod}\,\mathcal{A}(E,\sigma_{p}\tau^{i}) 
        \cong {\rm GrMod}\,\mathcal{A}(E,\sigma_{q}\tau^{j}),
        $$
	it is enough to show
	$$
        {\rm GrMod}\,\mathcal{A}(E,\sigma_{p}\tau^{i}) 
        \cong {\rm GrMod}\,\mathcal{A}(E,\sigma_{p+s}\tau^{j})
        $$
	where $s=\tau^{-l}(r) \in E[3]$.
	Since $p \in E \setminus E[3]$, 
        $p+s \in E \setminus E[3]$, so
	$\mathcal{A}(E,\sigma_{p}\tau^{i})$ 
        and $\mathcal{A}(E,\sigma_{p+s}\tau^{j})$
	are geometric algebras of Type EC by Lemma \ref{lem_E[3]}.
	We construct a sequence of automorphisms 
        $\{ \phi_{n} \}_{n \in \mathbb{Z}}$ of ${\rm Aut}_{k}\,(\mathbb{P}^{2},E)$.
	We set $\phi_{0}:={\rm id}_{E}$.
	For each $n \ge 1$, we define $\phi_{n}$ inductively as
	$$\phi_{n}:=\sigma_{r_{n}}\tau^{n(j-i)},$$
	where $r_{n}:=p-\tau^{n(j-i)}(p)+s+\tau^{j}(r_{n-1})$ and $r_{0}:=o$.
	For any $n \ge 0$, if $r_{n} \in E[3]$, then
	$r_{n+1}:=p-\tau^{(n+1)(j-i)}(p)+s+\tau^{j}(r_{n}) \in E[3]$ 
        by Lemma \ref{E[3]}.

	Next, for $n \leq -1$, we construct automorphisms 
        $\phi_{n} \in {\rm Aut}_{k}(\mathbb{P}^{2},E)$.
	For each $n \ge 1$, we define $\phi_{-n}$ inductively as
	$$\phi_{-n}:=\sigma_{r_{-n}}\tau^{-n(j-i)},$$
	where $r_{-n}:=\tau^{(n-1)i-nj}(p-\tau^{(n-1)(j-i)}(p)+\tau^{(n-1)(j-i)}(-s+r_{-(n-1)}))$ and $r_{0}:=o$.
	For any $n \ge 0$, if $r_{-n} \in E[3]$, 
        then $r_{-(n+1)}:=\tau^{ni-(n+1)(j-i)}(p-\tau^{n(j-i)}(p)+\tau^{n(j-i)}(-s+r_{-n})) \in E[3]$ by Lemma \ref{E[3]}.
	By this construction, 
        we have the sequence of automorphisms 
        $\{ \phi_{n} \}_{n \in \mathbb{Z}}$ of ${\rm Aut}_{k}(\mathbb{P}^{2},E)$
	such that the diagram
	\[ \xymatrix{ 
		E \ar[r]^{\phi_{n}} \ar[d]_{\sigma_{p}\tau^{i}} 
                & E \ar[d]^{\sigma_{p+s}\tau^{j}} \\
		E \ar[r]^{\phi_{n+1}} & E
	}\]
	commutes for each $n \in \mathbb{Z}$, 
        so 
	$
        {\rm GrMod}\,\mathcal{A}(E,\sigma_{p}\tau^{i}) 
        \cong {\rm GrMod}\,\mathcal{A}(E,\sigma_{p+s}\tau^{j})
        $
	by Theorem \ref{gradedequi}.
\end{proof}
\begin{exm}\label{exam2}
    We use the same graded algebras $A$ and $A''$ 
    as in Example \ref{exm1}
    so that $A \not\cong A''$.
    It follows from Theorem \ref{main4} that 
    ${\rm GrMod}\,A \cong {\rm GrMod}\,A''$ if and only if
    $2p=p-\tau_{\la}(p) \in E_{\la}[3]$ if and only if
    $p \in E_{\la}[6]$. 
    From \cite{F}, we see that $|E_{\la}[6]|=36$, 
    so $A$ and $A''=A^{\phi_{\la}}$ are rarely graded Morita equivalent 
    (see Remark \ref{ta}). 
\end{exm}
\section*{Acknowledgments}
The first author was supported by 
Grants-in-Aid for Young Scientific Research 18K13397 
Japan Society for the Promotion of Science.
The authors appreciate Sho Matsuzawa, Kim Gahee, Jame Eccels, Ryo Onozuka, 
Shinichi Hasegawa and Kosuke Shima 
for their helping to build and to check 
the tables in Theorem \ref{thm_main} and Theorem \ref{thm_main2}.
The authors also appreciate Shinnosuke Okawa 
for a useful comment on Remark \ref{Z-algs}. 
At most the authors would like to thank Izuru Mori
for his supervision on this work.

\end{document}